\newcommand{\cmark}{\ding{51}}
\newcommand{\xmark}{\ding{55}}
\numberwithin{equation}{section}
\newcommand{\R}{\mathbb{R}}
\newcommand{\Mcal}{\mathcal{M}}
\newcommand{\M}{\mathcal{M}}
\newcommand{\calM}{\mathcal{M}}
\newcommand{\Acal}{\mathcal{A}}
\newcommand{\bigO}{\mathcal{O}}
\definecolor{purple}{rgb}{0.74, 0.2, 0.64}
\newcommand{\Lcal}{\mathcal{L}}
\newcommand{\Ccal}{\mathcal{C}}
\newcommand{\calC}{\mathcal{C}}
\newcommand{\Dcal}{\mathcal{D}}
\newcommand{\calD}{\mathcal{D}}
\newcommand{\D}{\mathrm{D}}
\newcommand{\ddt}{\frac{\mathrm{d}}{\mathrm{d}t}}
\newcommand{\Ecal}{\mathcal{E}}
\newcommand{\calE}{\mathcal{E}}
\newcommand{\calL}{\mathcal{L}}
\newcommand{\txt}{\text}
\newcommand{\transpose}{^\top}
\newcommand{\St}{\mathrm{St}}
\newcommand{\grad}{\mathrm{grad}}
\newcommand{\Proj}{\mathrm{Proj}}
\newcommand{\Hess}{\mathrm{Hess}}
\newcommand{\hess}{\nabla^2}
\newcommand{\T}{\mathrm{T}}
\newcommand{\Nrm}{\mathrm{N}}
\newcommand{\Sym}{\mathrm{Sym}}
\newcommand{\Rm}{\mathbb{R}^m}
\newcommand{\Rn}{\mathbb{R}^n}
\newcommand{\rank}{\operatorname{rank}}
\newcommand{\spann}{\operatorname{span}}
\newcommand{\nl}{\newline}
\newcommand{\inner}[2]{\left\langle{#1},{#2}\right\rangle}
\newcommand{\Id}{\mathrm{Id}}
\newcommand{\I}{\mathrm{I}}
\newcommand{\sigmamin}{\sigma_\mathrm{min}}
\newcommand{\sigmamax}{\sigma_\mathrm{max}}
\newcommand{\sigmabar}{\underline{\sigma}}
\newcommand{\Clambdabar}{\overline{C_{\lambda}}}
\newcommand{\betabar}{\overline{\beta}}
\newcommand{\lambdamin}{\lambda_\mathrm{min}}
\newcommand{\LP}{\mathrm{LP}}
\newcommand{\tmax}{t_{\mathrm{max}}}
\DeclareMathOperator*{\argmin}{\arg\!\min}
\newcommand{\aref}[1]{\hyperref[#1]{A\ref{#1}}}
\newcommand{\norm}[1]{\left\|#1\right\|}
\newcommand{\fronorm}[1]{\left\|#1\right\|_\mathrm{F}}
\newcommand{\opnorm}[1]{\left\|{#1}\right\|_\mathrm{op}}
\newtheorem{theorem}			     {Theorem}	[section]
\newtheorem{corollary}	  [theorem]	 {Corollary}	
\newtheorem{lemma}	      [theorem]  {Lemma}		
\newtheorem{definition}	         {Definition}[section]
\newtheorem{assumption} {A\ignorespaces}%[section]
\newtheorem{remark}{Remark}[section]
\newtheorem*{example*}{Example}
\definecolor{listinggray}{gray}{0.9}
\definecolor{lbcolor}{rgb}{0.9,0.9,0.9}
\begin{document}
\title{Computing second-order points under equality constraints: revisiting Fletcher's augmented Lagrangian}
%\author{Florentin Goyens, Armin Eftekhari, Nicolas Boumal}

\author[1]{Florentin Goyens*}
\author[ ]{Armin Eftekhari}
\author[2]{Nicolas Boumal}
\affil[1]{\textsc{LAMSADE}, Université Paris Dauphine-PSL\\
Paris, France}
\affil[2]{Ecole Polytechnique Fédérale de Lausanne (EPFL), Insitute of Mathematics, Switzerland}
\affil[*]{Corresponding author: \url{goyensflorentin@gmail.com} }

\date{\today}

\maketitle

\begin{abstract}
We address the problem of minimizing a smooth function under smooth equality constraints. Under regularity assumptions on these constraints, we propose a notion of approximate first- and second-order critical point which relies on the geometric formalism of Riemannian optimization. Using a smooth exact penalty function known as Fletcher's augmented Lagrangian, we propose an algorithm to minimize the penalized cost function which reaches $\varepsilon$-approximate second-order critical points of the original optimization problem in at most $\bigO(\varepsilon^{-3})$ iterations. This improves on current best theoretical bounds.
Along the way, we show new properties of Fletcher's augmented Lagrangian, which may be of independent interest. \newline
\textbf{Keywords:} nonconvex optimization, constrained optimization, augmented Lagrangian, complexity, Riemannian optimization.
\end{abstract}

\section{Introduction}
\label{sec:intro}
Working over a Euclidean space $\calE$ with inner product $\left\langle \cdot, \cdot\right\rangle$ and associated norm $\norm{\cdot}$, we consider the constrained optimization problem
\begin{equation}
\underset{x}{ \min} f(x) \textrm{ subject to } h(x)=0,
\label{eq:P}
\tag{P}
\end{equation}
where $f \colon \calE \to \R$ and $h \colon \calE \to \Rm$ are smooth ($C^\infty$). The feasible set is denoted by 
\begin{equation}\label{eq:M}
\Mcal = \{x\in \calE: h(x) = 0\}. 
\end{equation}
Our aim is to propose an infeasible algorithm for problem~\eqref{eq:P} that has good global complexity guarantees---an active topic of research whose related literature is described in Appendix~\ref{sec:literature}. 
 
The complexity is expressed in terms of worst-case number of iterations needed to find an $\varepsilon$-approximate (second-order) critical point. Thus, we need a precise notion of approximate criticality. For constrained problems such as~\eqref{eq:P}, especially when it comes to second-order criticality, there does not seem to be a consensus on what that should be: Appendix~\ref{sec:literature} reviews various proposals that have been made, with their pros and cons.

Here, under a certain LICQ-type assumption (see~\aref{assu:ROI} below), we propose a natural notion of $\varepsilon$-approximate second-order optimality conditions in Section~\ref{sec:optimality_condition}. Our definition has a geometric interpretation, as it is an extension of the Riemannian optimality conditions to points that are approximately feasible. This allows us to use the formalism of Riemannian optimization for the complexity analysis of an infeasible method. This perspective, combined with a modern take on some of Fletcher's ideas from the 1970s, leads to improved complexity bounds.
 
Concretely, we propose an algorithm which computes such $\varepsilon$-approximate second-order critical points with state-of-the-art worst-case iteration complexity with respect to $\varepsilon$ (Section~\ref{sec:algorithm}). The algorithm relies on an augmented Lagrangian formalism introduced by~\citet{fletcher1970class} which provides a \emph{smooth yet exact} penalty function for constrained optimization given by
\begin{align}
\label{eq:g_page1}
g(x) := f(x) - \inner{h(x)}{\lambda(x)} + \beta \norm{h(x)}^2,
\end{align}
for some parameter $\beta\geq 0$ and multipliers $\lambda(x)$ defined below in~\eqref{eq:lambda}. This penalty function has a reputation for being impractical. We study it in its original form as it allows us to secure desirable theoretical guarantees. Moreover, we note that other authors~\citep{gao2019parallelizable,estrin2020implementing,estrin2020implementingb} have successfully used approximations of $g$ or its derivatives to build practical schemes, and it is possible that the theoretical guarantees could extend to those as well.

Our theoretical algorithm is a simple method that combines gradient and eigensteps applied to Fletcher's augmented Lagrangian. The existing literature on Fletcher's augmented Lagrangian focuses on asymptotic convergence towards minimizers. We complement this with an analysis of the global complexity of computing approximate critical points of~\eqref{eq:P} (non-asymptotic). We do so in two phases.

First, we show that approximate critical points of Fletcher's penalty are approximate critical points of~\eqref{eq:P}. This is an extension of known results which relate exact minimizers of $g$ to exact minimizers of~\eqref{eq:P}. Second, we show that our algorithm computes approximate minimizers of $g$ in finite time, and we give a worst-case bound on the number of iterations for their computation. This leads to a  complexity rate with respect to $\varepsilon$ which improves on the state of the art for computing second-order critical points under equality constraints (even after taking into account the differences in notions of approximate criticality). One downside of the algorithm is that it requires properly setting a penalty parameter $\beta$: we discuss how to circumvent this issue in Section~\ref{sec:plateau-scheme}, at the cost of log-factors.

There is a wealth of related literature. We provide a detailed review in Appendix~\ref{sec:literature}, both regarding different notions of approximate criticality for constrained optimization and regarding iteration complexity. These complexity results are further summarized in Table~\ref{table:review}. For our contributions and outline of the paper, see Section~\ref{sec:contributions}. We preface this with our assumptions on Problem~\eqref{eq:P} in Section~\ref{sec:assumptions} and our geometric definition of approximate critical points in Section~\ref{sec:optimality_condition}.

\subsection{Assumptions}
\label{sec:assumptions}
We introduce three central assumptions about the set $\M$~\eqref{eq:M}. The following set is open:
\begin{align}
\mathcal{D} = \{ x \in \calE \colon \rank(\D h(x)) = m \}.
\label{eq:D}
\end{align}
 It is known that if $\M $ is included in $\calD$ then $\M$ is a (smooth) embedded submanifold of $\calE$~\citep{Absil2008}. We further assume that there is a region around $\M$ where the differential of the constraints is nonsingular: this is the classical linear independence constraint qualification (LICQ). Below we use $\norm{\cdot}$ to denote the  2-norm on $\R^m$. 
\begin{assumption} \label{assu:ROI}
	There exist constants $R, \sigmabar > 0$ such that for all $x$ in the set
		\begin{align}
			\calC & = \{ x \in \calE : \|h(x)\| \leq R \}
			\label{eq:C}
		\end{align}
		we have $\sigmamin(\D h(x)) = \sigma_m(\D h(x)) \geq \sigmabar > 0$ where $\sigma_k(A)$ and $\sigmamin(A)$ denote the $k$\textrm{th} and the smallest singular value of a linear map $A$, respectively. In particular, $\calM \subseteq \calC \subset \calD$.
\end{assumption}
\begin{assumption}\label{assu:boundedMandC}
The sets $\Mcal = \{x\in \calE: h(x) =0\}$ and $\calC = \{ x \in \calE : \|h(x)\| \leq R \}$ are compact.
\end{assumption}
Rather than assuming that $\M$ and $\calC$ are compact, our results could be extended to assume instead that the sublevel set $\left\{x\in \calE\colon g(x) \leq g(x_0)\right\}$ is bounded. We choose to proceed with~\aref{assu:boundedMandC} to avoid assumptions which would mix $h$ and $f$ (as is the case for $g$). In doing so, we favor assumptions that can be checked once for a given constraint $h$, leading to results that apply for broad choices of cost function $f$. 

\begin{assumption} \label{assu:lipschitzh}
	There exists a constant $C_h > 0$ such that, for all $x\in \Ccal$ and $v\in \calE$,
	\begin{align*}
	h(x + v) & = h(x) + \D h(x)[v] + E(x, v)
	\end{align*}
	with $\|E(x, v)\| \leq C_h \|v\|^2$.
\end{assumption}
Given the nonconvex nature of~\eqref{eq:P}, it is necessary to make some assumption in order to guarantee convergence to a feasible point. The set $\calC$ is the region where our assumptions apply. Accordingly, we require initialization in $\calC$. In some cases, such initializations are easy to produce (see Stiefel example below); in other cases, one may resort to a two-phase algorithm whose first phase attempts to compute an approximately feasible point~\citep{cartis2019optimality}. 
\begin{assumption}\label{assu:x0}
The iterate $x_0$ belongs to $\calC$.
\end{assumption}

Note that affine constraints do not satisfy~\aref{assu:boundedMandC}, but these constraints are usually not problematic as there are various effective approaches to handle them, including feasible methods. The following example shows how to compute the constants $R$ and $\sigmabar$, which define the region of interest $\calC$, for the Stiefel manifold. 
\begin{arxiv} 
The derivations can be found in Appendix~\ref{sec:proofs_intro}. 
\end{arxiv}
\begin{example*}[The Stiefel  manifold] \label{example:stiefel}
Let $\mathcal{E} = \R^{n\times p}$ for $1\leq p\leq n$. The Stiefel manifold is defined as 
\begin{equation}
\St(n,p) = \{X\in \R^{n\times p}: X\transpose X = \I_p\}. 
\end{equation}
The manifold corresponds to the defining function $h\colon \R^{n\times p} \to \mathrm{Sym}(p)\colon X \mapsto h(X) = X^\top X - \I_p$, where $\mathrm{Sym}(p)$ is the set of symmetric matrices of size $p$. For any $R<1$, it is possible to verify that all $X\in \R^{n\times p}$ such that $\norm{h(X)} \leq R$ satisfy $\sigmamin(\D h(X)) \geq 2 \sigmamin(X) \geq 2\sqrt{1-R}$. Therefore,~\aref{assu:ROI} is satisfied for any $R<1$ and $\sigmabar \leq 2\sqrt{1-R}$. For any $R>0$, the set $\St(n,p)$ satisfies the compactness assumption~\aref{assu:boundedMandC}. Assumption~\aref{assu:lipschitzh} holds with $C_h=1$. Additionally,~\aref{assu:x0} is easily satisfied by taking a matrix with $p$ orthonormal columns in $\Rn$ as initial iterate.\end{example*}
Given several sets that satisfy the assumptions above, their Cartesian product also does.
\begin{proposition}\label{prop:product_manifold}
For $i=1,2,\dots,k$, consider $k$ functions $h_i\colon \calE_i \to \R^{m_i}$ that satisfy assumptions~\aref{assu:ROI}, \aref{assu:boundedMandC} and \aref{assu:lipschitzh} with constants $R_i$, $\sigmabar_i$ and $C_{h_i}$. Then, the function $h\colon \calE \to \Rm$ with $\calE= \calE_1 \times \cdots \times \calE_k$ and $m=m_1 + \cdots +m_k$ defined by $h(x_1, \dots,x_k) = \left(h_1(x_1), \dots, h_k(x_k)\right)\transpose$ satisfies~\aref{assu:ROI}, \aref{assu:boundedMandC} and \aref{assu:lipschitzh} with constants $R = \min(R_1, \dots, R_k)$, $\sigmabar = \min(\sigmabar_1, \dots,\sigmabar_k)$ and $C_h = \max(C_{h_1}, \dots, C_{h_k})$.
\end{proposition}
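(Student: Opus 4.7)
The plan is to exploit the block structure that the product construction induces. Let $x = (x_1,\dots,x_k) \in \calE_1 \times \cdots \times \calE_k$. The differential $\D h(x)$ is a block-diagonal linear map with diagonal blocks $\D h_1(x_1), \dots, \D h_k(x_k)$, so its singular values are exactly the union of the singular values of the blocks; in particular,
\begin{equation*}
\sigmamin(\D h(x)) = \min_{i=1,\dots,k} \sigmamin(\D h_i(x_i)).
\end{equation*}
Moreover, from $h(x) = (h_1(x_1),\dots,h_k(x_k))^\top$ one gets $\|h(x)\|^2 = \sum_{i=1}^k \|h_i(x_i)\|^2$, hence $\|h(x)\| \leq R$ implies $\|h_i(x_i)\| \leq R \leq R_i$ for every $i$; that is, $\calC \subseteq \calC_1 \times \cdots \times \calC_k$.

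First I would verify \aref{assu:ROI}. For any $x \in \calC$, the inclusion above places each $x_i$ in $\calC_i$, so $\sigmamin(\D h_i(x_i)) \geq \sigmabar_i \geq \sigmabar$ by choice of $\sigmabar = \min_i \sigmabar_i$, and the block-diagonal identity above yields $\sigmamin(\D h(x)) \geq \sigmabar$. Next, for \aref{assu:boundedMandC}, clearly $\calM = \calM_1 \times \cdots \times \calM_k$, a finite product of compact sets, hence compact. The set $\calC$ is closed as the preimage of a closed ball under the continuous map $h$, and it is contained in the compact set $\calC_1 \times \cdots \times \calC_k$; a closed subset of a compact set is compact.

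For \aref{assu:lipschitzh}, fix $x \in \calC$ and $v \in \calE$, write $v = (v_1,\dots,v_k)$, and apply the assumption to each $h_i$ at $x_i \in \calC_i$ to obtain
\begin{equation*}
h_i(x_i + v_i) = h_i(x_i) + \D h_i(x_i)[v_i] + E_i(x_i, v_i), \qquad \|E_i(x_i, v_i)\| \leq C_{h_i}\|v_i\|^2.
\end{equation*}
Stacking these identities gives $h(x+v) = h(x) + \D h(x)[v] + E(x,v)$ with $E(x,v) = (E_1(x_1,v_1),\dots,E_k(x_k,v_k))^\top$, and
\begin{equation*}
\|E(x,v)\|^2 = \sum_{i=1}^k \|E_i(x_i, v_i)\|^2 \leq \Big(\max_i C_{h_i}\Big)^2 \sum_{i=1}^k \|v_i\|^4 \leq C_h^2 \Big(\sum_{i=1}^k \|v_i\|^2\Big)^2 = C_h^2 \|v\|^4,
\end{equation*}
where the second inequality uses $\sum_i a_i^2 \leq (\sum_i a_i)^2$ for nonnegative reals $a_i = \|v_i\|^2$.

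There is no real obstacle here: everything reduces to the elementary facts that (i) the product of the constraints gives a block-diagonal Jacobian, (ii) the $2$-norm on the product decouples into the sum of squared component norms, and (iii) preimages of closed balls under continuous maps are closed. The only mild care needed is to ensure that taking $R = \min_i R_i$ lets points of $\calC$ inherit membership in each $\calC_i$, which is exactly what activates the three component assumptions uniformly.
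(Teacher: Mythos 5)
Your proof is correct and follows essentially the same route as the paper: reduce each assumption to the component functions via the block structure of $h$ and $\D h$, and combine the constants by taking the appropriate min or max. If anything, you are slightly more careful than the paper on two points --- you justify compactness of $\calC$ as a closed subset of the compact product $\calC_1 \times \cdots \times \calC_k$ (rather than identifying $\calC$ with that product, which it need not equal), and you bound $\|E(x,v)\|$ via the squared norms rather than the triangle inequality --- but both arguments land on the same constants.
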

\begin{proof}
Let $\calC = \left\lbrace x\in \calE\colon \norm{h(x)}\leq R\right\rbrace$ and $x = (x_1, \dots,x_k)\in \calC$. Since $\norm{h(x)}\leq \min (R_1,\dots,R_k)$, we have $\norm{h_i(x_i)}\leq R_i$ and $\sigmamin\!\left(\D h_i(x_i)\right) \geq \sigmabar_i$ for all $i=1,\dots,k$. This implies $\sigmamin\!\left(\D h(x)\right) \geq \min(\sigmabar_1, \dots,\sigmabar_k)$. The Cartesian product of compact sets is compact. Let $v = (v_1, \dots, v_k) \in \calE$. Since $v_i\in \calE_i$ and $x_i\in \calC_i$, we have $h_i(x_i + v_i)  = h_i(x_i) + \D h_i(x_i)[v_i] + E_i(x_i, v_i)$ with $\|E_i(x_i, v_i)\| \leq C_{h_i} \|v_i\|^2$ for all $i=1,\dots,k$. Define $E(x,v) = \left(E_1(x_1,v_1),\dots, E_k(x_k,v_k)\right)\transpose$, then $h(x + v) = h(x) + \D h(x)[v] + E(x, v)$ with $\norm{E(x,v)}\leq \sum_{i=1}^k \norm{E_i(x_i,v_i)} \leq \sum_{i=1}^k C_{h_i} \norm{v_i}^2 \leq \left(\max_{i} C_{h_i}\right) \norm{v}^2$. 
\end{proof}
As the Stiefel manifold includes spheres ($p=1$) and orthogonal matrices ($p=n$) as special cases, Proposition~\ref{prop:product_manifold} establishes that products of spheres and orthogonal/rotation groups satisfy the assumptions above. This covers a wide range of applications, of which we list a few. A product of spheres appears in the Burer-Monteiro factorization of semidefinite programs with diagonal constraints~\citep{burer2003nonlinear}. It also appears in independent component analysis (ICA) and orthogonal tensor decomposition~\citep{ge2015escaping}, and models the rank reduction of correlation matrices~\citep{grubivsic2007efficient}. Products of orthogonal matrices appear in applications of orthogonal group synchronisation~\citep{ling2023solving}. The simultaneous localization and mapping problem in robotics involves optimization over a product of Stiefel manifolds~\citep{Rosen2021Advances}.

\subsection{Optimality conditions on layered manifolds}
\label{sec:optimality_condition}

Assumption~\aref{assu:ROI} allows us to characterize any point in $\calC$ as belonging to some Riemannian submanifold of $\calE$. This manifold is defined by a level set of the function $h$, while the feasible set $\M$ is the zero-set of $h$. This observation partitions the region of interest $\calC$ into Riemannian submanifolds which we call \textit{layered manifolds}. These layered manifolds help to formulate meaningful criticality conditions for points which are nearly but not exactly feasible. 
\begin{proposition}[Layered manifolds]
Under~\aref{assu:ROI}, for any $x \in \calC$, the set $\calM_x = \{y\in \calE: h(y) = h(x)\}$ is a submanifold of $\calE$ contained in $\calC$. The tangent space and the normal space of $\calM_x$ at $y \in \calM_x$ are given respectively by:
\begin{align}
\T_y\calM_{x}  &= \ker \D h(y) &&\textrm{ and } & \mathrm{N}_y \M_x &= \spann\!\left( \D h(y)^*\right),
\end{align}
where a star indicates an adjoint.
\end{proposition}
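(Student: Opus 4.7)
The plan is to reduce the statement to a direct application of the regular value (submersion) theorem for the smooth map $h \colon \calE \to \R^m$. The first step I would carry out is to verify that $\calM_x \subseteq \calC$: for any $y \in \calM_x$ the equality $h(y) = h(x)$ gives $\norm{h(y)} = \norm{h(x)} \leq R$ since $x \in \calC$, hence $y \in \calC$. This already settles the containment statement in the proposition, and more importantly it ensures that Assumption~\aref{assu:ROI} can be invoked at every point of $\calM_x$.

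With $\calM_x \subseteq \calC$ in hand,~\aref{assu:ROI} implies that $\D h(y)$ has rank $m$, hence is surjective onto $\R^m$, for every $y \in \calM_x$. Therefore $h(x)$ is a regular value of $h$, and the regular value theorem yields that $\calM_x = h^{-1}(\{h(x)\})$ is a smooth embedded submanifold of $\calE$ of dimension $\dim \calE - m$. To identify $\T_y \calM_x$, I would differentiate along an arbitrary smooth curve $\gamma \colon (-\delta, \delta) \to \calM_x$ with $\gamma(0) = y$; the chain rule applied to the identity $h(\gamma(t)) = h(x)$ gives $\D h(y)[\gamma'(0)] = 0$, which establishes $\T_y \calM_x \subseteq \ker \D h(y)$. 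A dimension count (both subspaces have dimension $\dim \calE - m$) then yields equality.

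The normal space is by definition the orthogonal complement of the tangent space in $\calE$, so the standard adjoint identity $(\ker A)^\perp = \range(A^*)$ applied to $A = \D h(y)$ gives
$$\mathrm{N}_y \calM_x = \range(\D h(y)^*) = \spann(\D h(y)^*),$$
as claimed. I do not anticipate any serious obstacle here: the whole proposition is essentially a structured packaging of the preimage theorem together with elementary linear algebra. The only point that deserves any care is the very first one, namely propagating the hypothesis $x \in \calC$ to every point of $\calM_x$ so that the rank condition of~\aref{assu:ROI} can be invoked throughout $\calM_x$ and the submersion theorem applies globally on that level set.
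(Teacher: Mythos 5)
Your proposal is correct and follows essentially the same route as the paper, which simply cites the submersion/regular-value result (Proposition 3.3.3 of Absil et al.) after noting that the rank condition of \aref{assu:ROI} holds on all of $\calM_x$; you unpack that citation explicitly, including the tangent- and normal-space identifications. Your opening observation that $h(y)=h(x)$ forces $\calM_x\subseteq\calC$, so that the rank condition propagates to every point of the level set, is exactly the (implicit) key step in the paper's one-line argument.
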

\begin{proof}
Using Proposition 3.3.3 from~\citep{Absil2008}, the set $\M_x$ is a submanifold of $\calE$ if $\rank(\D h(y)) = m$ for all $y\in \calM_x$, which holds for all $x\in \calC$ under~\aref{assu:ROI}.
\end{proof}
The embedded submanifold $\M_x$ for some $x\in \calC$ is turned into a Riemannian submanifold using the Euclidean inner product of $\calE$ restricted to the tangent spaces of $\M_x$. We proceed to compute the Riemannian gradient and Riemannian Hessian of $f$ on the layer $\M_x$. To this end, we define the function $\lambda \colon \calE \to \Rm$ as follows:
\begin{align}
	\lambda(x) & = (\D h(x)^*)^\dagger[\nabla f(x)],
	\label{eq:lambda}
\end{align}
where a dagger indicates a Moore--Penrose pseudo-inverse. \emph{This is the same function $\lambda(\cdot)$ used in Fletcher's augmented Lagrangian} (Equation~\eqref{eq:g_page1}). This function is particularly relevant at points $x$ in $\calC$ because, if $\rank \D h(x) = m$, then the orthogonal projector from $\calE$ to the tangent space $\T_x\calM_{x} = \ker \D h(x)$ is given in explicit form by
	\begin{align*}
		\Proj_x(v) & = v - \D h(x)^*[z] & \textrm{ with } && z & = (\D h(x)^*)^\dagger[v].
	\end{align*}
Therefore, the Riemannian gradient of $f$ on $\calM_x$ is given by
	\begin{align}
		\grad_{\calM_x} f(x) & = \Proj_x\!\left( \nabla f(x) \right) = \nabla f(x) - \D h(x)^*[\lambda(x)],
		\label{eq:gradfM}
	\end{align}
	 the orthogonal projection of the Euclidean gradient of $f$ to the tangent space $\T_x \calM_x$. Likewise, the Riemannian Hessian of $f$ on $\calM_x$ is given by
	\begin{align}
		\Hess_{\calM_x} f(x) & = \Proj_x \circ \left(  \nabla^2 f(x) - \sum_{i = 1}^{m} \lambda_i(x) \nabla^2 h_i(x) \right) \circ \Proj_x,
		\label{eq:hessfM}
	\end{align}
	a self-adjoint linear operator on $\T_x\calM_x$~\cite[Section 7.7]{boumal2023intromanifolds}.

We now go over exact and approximate criticality conditions for problem~\eqref{eq:P}.
First-order critical points of~\eqref{eq:P} are defined by
\begin{align}
h(x)  &= 0 & \textrm{ and } && \grad_\calM f(x) = 0,
\label{eq:riemannian-feasible-focp}
\end{align}
whereas second-order critical points satisfy
\begin{align}
h(x) & = 0, & \grad_\calM f(x) & = 0, & \textrm{ and } & & \Hess_\calM f(x) & \succeq 0.
\label{eq:riemannian-feasible-socp}
\end{align}
At points $x\in\calD$~\eqref{eq:D}, constraint qualifications hold, providing:
\begin{proposition}
Any local minimizer of~\eqref{eq:P} is a second-order critical point which satisfies~\eqref{eq:riemannian-feasible-socp}.
\end{proposition}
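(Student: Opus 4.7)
The plan is to reduce the proposition to a standard result of unconstrained Riemannian optimization, applied to $f$ viewed as a smooth function on the embedded submanifold $\calM$. Under~\aref{assu:ROI}, we have $\calM \subseteq \calC \subset \calD$, so $\rank \D h(x) = m$ for every $x \in \calM$, and $\calM$ is a smooth embedded submanifold of $\calE$ by the regular value theorem (Proposition 3.3.3 in~\citep{Absil2008}). Equipping $\calM$ with the Riemannian metric inherited from $\calE$ makes the formulas~\eqref{eq:gradfM}--\eqref{eq:hessfM} (with $\calM_x = \calM$) valid expressions for $\grad_\calM f$ and $\Hess_\calM f$.

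Next I would observe that a local minimizer $x^*$ of~\eqref{eq:P} is, by definition, a local minimizer of the restriction of $f$ to $\calM$. The necessary conditions for unconstrained minimizers on a Riemannian manifold then yield $\grad_\calM f(x^*) = 0$ and $\Hess_\calM f(x^*) \succeq 0$ (see, e.g., Corollary 4.2 and Proposition 6.3 in~\citep{boumal2023intromanifolds}). Together with the trivial feasibility $h(x^*) = 0$, this gives exactly~\eqref{eq:riemannian-feasible-socp}.

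If a self-contained argument is preferred, I would argue directly via smooth curves: for any $v \in \T_{x^*}\calM = \ker \D h(x^*)$, pick a smooth curve $\gamma \colon (-\varepsilon,\varepsilon) \to \calM$ with $\gamma(0) = x^*$ and $\dot\gamma(0) = v$ (such a curve exists because $\calM$ is an embedded submanifold). Then $\phi(t) = f(\gamma(t))$ has a local minimum at $t=0$, so $\phi'(0) = \inner{\nabla f(x^*)}{v} = 0$ and $\phi''(0) \geq 0$. Since $v \in \T_{x^*}\calM$ is arbitrary and the tangent projection of $\nabla f(x^*)$ is precisely $\grad_\calM f(x^*)$ by~\eqref{eq:gradfM}, the first-order condition follows. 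The second-order inequality unpacks (using that $\ddot\gamma(0) - \Proj_{x^*}(\ddot\gamma(0)) \in \Nrm_{x^*}\calM$ is orthogonal to $\grad_\calM f(x^*) = 0$) into $\inner{\Hess_\calM f(x^*)[v]}{v} \geq 0$ for all $v \in \T_{x^*}\calM$, which is~\eqref{eq:hessfM}-nonnegativity.

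There is no real obstacle: the LICQ-type assumption~\aref{assu:ROI} does all the work by making $\calM$ a manifold and making the Lagrange multipliers $\lambda(x^*)$ in~\eqref{eq:lambda} well-defined and unique, so the proof is a two-line invocation of standard Riemannian necessary optimality conditions.
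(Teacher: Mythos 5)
Your proposal is correct and coincides with the paper's own (implicit) justification: the paper states this proposition without proof, as the standard consequence of constraint qualifications holding on $\calD$ under~\aref{assu:ROI}, which is exactly your reduction to the first- and second-order necessary conditions for minimizing $f$ on the Riemannian submanifold $\calM$ via~\eqref{eq:gradfM} and~\eqref{eq:hessfM}. Your curve-based sketch is also sound, though the decisive point there is that the normal component of $\ddot\gamma(0)$ pairs with the normal part $\D h(x^*)^*[\lambda(x^*)]$ of $\nabla f(x^*)$ to produce the $-\sum_i \lambda_i(x^*)\nabla^2 h_i(x^*)$ correction in~\eqref{eq:hessfM}, rather than merely being orthogonal to the vanishing gradient.
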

Using this Riemannian viewpoint, we propose a new definition of approximate criticality for smooth equality constraints. We compare this new notion to existing ones in Appendix~\ref{sec:literature}.
\begin{definition}  \label{def:approx_fosp} The point $x\in \calD$ is an $(\varepsilon_0, \varepsilon_1)$-approximate first-order critical point of~\eqref{eq:P} if
\begin{align}
\norm{h(x)} &\leq \varepsilon_0 & \textrm{ and } && \norm{\grad_{\M_{x}} f(x)}\leq\varepsilon_1.
\label{eq:FOCP}
\tag{$\varepsilon$-FOCP}
\end{align}
\end{definition}
\begin{definition} \label{def:approx_sosp} The point $x\in \calD$ is an $(\varepsilon_0, \varepsilon_1, \varepsilon_2)$-approximate second-order critical point of~\eqref{eq:P} if
\begin{align}
\norm{h(x)} &\leq \varepsilon_0, & \norm{\grad_{\M_x} f(x)} &\leq \varepsilon_1 &  \textrm{ and } && \Hess_{\M_x} f(x) &\succeq -\varepsilon_2\Id.
\label{eq:SOCP}
\tag{$\varepsilon$-SOCP}
\end{align}
\end{definition}
The notions of~\eqref{eq:FOCP} and~\eqref{eq:SOCP} have a natural geometric interpretation. For a point $x \in \calC$ which is nearly feasible, the criticality is assessed with respect to the manifold layer to which $x$ belongs. In essence, $x$ satisfies the usual approximate criticality conditions for a Riemannian optimization problem, i.e., small Riemannian gradient and almost positive semi-definite Riemannian Hessian. However, these conditions are satisfied on the tangent space of a layer manifold $\calM_x$ rather than on the target manifold $\calM$.

\subsection{Contributions}
\label{sec:contributions}
We summarize our contributions in the following list:
\begin{itemize}
\item We propose a new definition of approximate criticality for~\eqref{eq:P}, see~\eqref{eq:FOCP} and~\eqref{eq:SOCP}. These conditions are an extension of Riemannian optimality conditions to points outside the feasible manifold $\M$. We believe that these conditions are more natural geometrically than commonly used conditions for constrained optimization problems. Approximate criticality in our sense implies approximate criticality in the more common sense (with the same $\varepsilon$), but the converse is not true (unless $\varepsilon$ is smaller than some unknown threshold)---we give an example in Appendix~\ref{sec:literature}.
\item We relate the landscape of~\eqref{eq:P} with the the landscape of $g$ (Fletcher's augmented Lagrangian, Equation~\eqref{eq:g_page1}). As far as we know, the existing literature on Fletcher's augmented Lagrangian is limited to asymptotic convergence results. Those rely on the convenient property that exact minimizers of Fletcher's augmented Lagrangian are exact minimizers of~\eqref{eq:P} under suitable conditions (see Proposition~\ref{prop:bertsekas}). In contrast, to obtain complexity bounds, it is necessary to consider approximate minimizers. In Section~\ref{sec:fletcher-alm}, we show that \emph{approximate} first- and second-order critical points of Fletcher's augmented Lagrangian satisfy~\eqref{eq:FOCP} and~\eqref{eq:SOCP} for~\eqref{eq:P}, provided that the penalty parameter $\beta$ is large enough.
\item We apply a standard unconstrained minimization algorithm to $g$—with small modifications to remain in the set $\calC$~\eqref{eq:C}—and leverage our observations from the previous points to deduce guarantees about problem~\eqref{eq:P}. Algorithm~\ref{algo:gradient-eigenstep} finds points which satisfy~\eqref{eq:FOCP} and~\eqref{eq:SOCP} for~\eqref{eq:P} in a worst-case iteration complexity which improves on the state of the art. Our main complexity result is in Theorem~\ref{thm:complexity}. Informally, it states the following:
\begin{theorem}[Informal statement]Under \aref{assu:ROI}, \aref{assu:boundedMandC}, \aref{assu:lipschitzh}, \aref{assu:x0}, given $\beta>0$ large enough, Algorithm~\ref{algo:gradient-eigenstep} produces an $(\varepsilon_1, 2\varepsilon_1)$-$\mathrm{FOCP}$ of~\eqref{eq:P} in at most  $\bigO\left(\varepsilon_1^{-2}\right)$ iterations. Algorithm~\ref{algo:gradient-eigenstep} also produces an $(\varepsilon_1, 2\varepsilon_1,\varepsilon_2 + C\varepsilon_1)$-$\mathrm{SOCP}$ of~\eqref{eq:P} in at most $\bigO\left(\max\lbrace \varepsilon_1^{-2},\varepsilon_2^{-3}\rbrace\right)$ iterations, where $C\geq 0$ is a constant depending on the constraint function $h$.
\end{theorem}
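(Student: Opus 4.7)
My plan is to prove the theorem in two conceptually separate stages. Stage 1 (analytic, landscape correspondence): show that any $x\in\calC$ with small Euclidean gradient and almost positive semidefinite Euclidean Hessian of $g$ is automatically an approximate FOCP/SOCP for \eqref{eq:P} in the sense of Definitions~\ref{def:approx_fosp}--\ref{def:approx_sosp}. Stage 2 (algorithmic, complexity): show that a textbook gradient-plus-eigenstep method applied to $g$, with safeguards to stay in $\calC$, finds such a point in the claimed number of iterations.

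For Stage~1, I would decompose $\nabla g(x)$ into its components along $\ker \D h(x)$ and $\spann(\D h(x)^*)$. A direct calculation gives $\nabla g(x) = \grad_{\calM_x} f(x) - \D \lambda(x)^*[h(x)] + 2\beta\,\D h(x)^*[h(x)]$, so the first summand is tangent to $\calM_x$ by~\eqref{eq:gradfM}, the last summand is normal, and $\D\lambda(x)^*[h(x)]$ is an error of size $O(\|h(x)\|)$ with constants bounded on $\calC$ under \aref{assu:ROI} and \aref{assu:boundedMandC}. Projecting $\nabla g(x)$ onto $\spann(\D h(x)^*)$ yields a vector whose norm is at least $(2\beta\sigmabar^2 - L_\lambda)\|h(x)\|$, where $L_\lambda$ bounds $\|\D\lambda\|$ on $\calC$; choosing $\beta$ above an explicit threshold converts $\|\nabla g(x)\|\le \varepsilon_1$ into both $\|h(x)\|\le\varepsilon_1$ and $\|\grad_{\calM_x}f(x)\|\le 2\varepsilon_1$, which is exactly the $(\varepsilon_1,2\varepsilon_1)$-FOCP. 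An analogous but more delicate computation on $\nabla^2 g(x)$, using~\eqref{eq:hessfM} and the Lipschitz bounds from \aref{assu:lipschitzh}, will show that $\nabla^2 g(x)\succeq -\varepsilon_2 \Id$ implies $\Hess_{\calM_x}f(x)\succeq -(\varepsilon_2 + C\varepsilon_1)\Id$ on $\T_x\calM_x$, with $C$ depending only on $h$ and $\sigmabar$.

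For Stage~2, I would invoke a classical complexity bound: on a compact region where $g$ has Lipschitz gradient (constant $L_1$) and Lipschitz Hessian (constant $L_2$), a step that chooses between a negative-gradient step and an eigenstep along a most-negative curvature direction decreases $g$ by $\Omega(\|\nabla g(x_k)\|^2/L_1)$ in the former case and $\Omega(\varepsilon_2^3/L_2^2)$ in the latter. Since $f$ is continuous on the compact set $\calC$ and $\beta\|h(x)\|^2\ge 0$, $g$ is bounded below on $\calC$ by \aref{assu:boundedMandC}, and a telescoping argument then gives the $\bigO(\varepsilon_1^{-2})$ bound to reach $\|\nabla g\|\le \varepsilon_1$ and the $\bigO(\max\{\varepsilon_1^{-2},\varepsilon_2^{-3}\})$ bound to additionally reach $\nabla^2 g\succeq -\varepsilon_2 \Id$. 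The Lipschitz constants $L_1, L_2$ depend on $\beta$ and on quantities bounded under \aref{assu:ROI}, \aref{assu:boundedMandC}, \aref{assu:lipschitzh}, so they are absorbed into the constants hidden by the $\bigO$.

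The main obstacle is ensuring that iterates stay inside $\calC$ throughout execution, because every estimate above, and in particular the very definition of $\lambda(x)$, relies on $\rank \D h(x) = m$. I expect this to be handled by the ``small modifications'' alluded to in Section~\ref{sec:algorithm}: a backtracking or safeguarded step rule that rejects any trial point with $\|h\|>R$, combined with a sublevel-set argument showing that $\{g\le g(x_0)\}\subseteq\calC$ once $\beta$ is large enough. The latter follows because on $\calC$ the quantity $f(x)-\inner{h(x)}{\lambda(x)}$ is bounded (by \aref{assu:boundedMandC} and continuity of $\lambda$) while $\beta\|h(x)\|^2$ grows quadratically in $\|h(x)\|$; comparing values at $x_0$ to values at the boundary $\|h\|=R$ yields the explicit threshold on $\beta$ that keeps iterates trapped inside $\calC$, closing the argument.
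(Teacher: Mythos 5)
Your Stage~1 and the decrease-plus-telescoping part of Stage~2 follow the paper's route closely: the tangent/normal splitting of $\nabla g$ with the lower bound $(2\beta\sigmamin^2-C_\lambda)\norm{h(x)}$ on the normal part is exactly Proposition~\ref{thm:g-approx-stationary}, the Hessian transfer is Proposition~\ref{thm:approx_stationarity_hessian}, and the per-step decrease bounds $\Omega(\varepsilon_1^2)$ and $\Omega(\varepsilon_2^3)$ with $g$ bounded below on the compact set $\calC$ give Theorem~\ref{thm:complexity}. The gap is in how you keep the iterates in $\calC$. The sublevel-set argument $\{g\leq g(x_0)\}\subseteq\calC$ does not work here as stated: $g$ is only defined where $\rank\D h=m$, so a trial step that exits $\calC$ cannot even be evaluated and rejected by comparing $g$-values; moreover $g(x_0)$ itself grows like $\beta\norm{h(x_0)}^2$, so the threshold on $\beta$ you would extract from comparing $g(x_0)$ with $\inf_{\norm{h}=R} g$ degenerates as $\norm{h(x_0)}\to R$, and \aref{assu:x0} permits $\norm{h(x_0)}=R$ exactly. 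The paper deliberately avoids any such assumption mixing $f$ and $h$ (see the discussion after \aref{assu:boundedMandC}).

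What is actually needed, and what the paper proves in Propositions~\ref{prop:t1} and~\ref{prop:t2}, is a \emph{uniform positive lower bound} on the step sizes that both stay in $\calC$ and achieve sufficient decrease; without it the per-iteration decrease could vanish and the $\bigO(\varepsilon_1^{-2})$ and $\bigO(\varepsilon_2^{-3})$ counts fail. For gradient steps this rests on a structural cancellation you do not mention: writing $h(x-t\nabla g(x))$ via \aref{assu:lipschitzh}, the term $\D h(x)[\grad_{\calM_x}f(x)]$ vanishes because the Riemannian gradient is tangent, so the first-order change of $h$ along $-\nabla g$ is governed by $-(2\beta\,\D h\,\D h^*-\D h\,\D\lambda^*)[h(x)]$, which is contractive once $\beta>\beta_1(x)$; this yields an explicit $t_1(x)>0$ with $\underline{t_1}=\min_{x\in\calC}t_1(x)>0$ by compactness. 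For eigensteps, the admissible interval $[0,t_2(x)]$ exists only because the first-order phase has already forced $\norm{h(x)}\leq\varepsilon_1\leq R/2$, leaving slack $R/2$ to move in an arbitrary unit direction; this is why Algorithm~\ref{algo:gradient-eigenstep} requires $\varepsilon_1\leq R/2$. Your proposal would need to supply these two quantitative ingredients to close the argument.
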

In Section~\ref{sec:plateau-scheme} we show how to estimate $\beta$ at the cost of a log-factor in complexity. 
\item We provide an extensive review of complexity guarantees for nonconvex optimization with equality constraints in Appendix~\ref{sec:literature}. The appendix details how our complexity results compare with the existing literature. Algorithm~\ref{algo:gradient-eigenstep} is the first augmented Lagrangian method to find approximate second-order critical points in a total iteration complexity of $\bigO(\varepsilon^{-3})$. Beyond augmented Lagrangian methods, the only other method that we are aware of which achieves a similar complexity is the two-phase method in~\citep{cartis2019optimality}; however, this method achieves a markedly different notion of criticality which makes the comparison delicate. Table~\ref{table:review} summarizes some of the main results detailed in Appendix~\ref{sec:literature}.
\end{itemize}
Admittedly, the class of problems to which our theoretical algorithm applies is somewhat limited. These limitations should be compared to those of other algorithm with comparable theoretical guarantees, which also feature restrictive assumptions. As described in Appendix~\ref{sec:literature}, other works face similar limitations in terms of initialization, smoothness and compactness assumptions. When other works are able to relax some of these assumptions, it typically comes with a worsening of the iteration complexity.

\begin{sidewaystable}
\begin{tabu}{|c|c|c|c|c|c|}
\hline 
 Paper & local rate & complexity & target points & problem class & 2nd order \\ 
\hline
This work & \xmark$^*$ & $\bigO(\varepsilon^{-2})$ and $\bigO(\varepsilon^{-3})$ &  \eqref{eq:SOCP}    &$\min f(x)$ s.t. $h(x)=0$  & \cmark \\ 
\hline 
\citep{polyak2009local} &  quadratic & \xmark & \xmark & $\min f(x)$ s.t. $h(x)=0$ & \cmark \\ 
\hline 
\citep{cartis2019optimality} & \xmark & $\bigO(\varepsilon^{-2})$ and $\bigO(\varepsilon^{-3})$ & \eqref{eq:cartis-critical} & $\min f(x)$ s.t. $h(x)=0, x\in C$ cvx & \cmark \\ 
%\hline 
% \citep{birgin2018augmented} & \xmark  & \xmark & \eqref{eq:SOCP} + ineq. & $\min f(x)$ s.t. $h(x)=0, h_2(x) \leq 0$  & \cmark \\ 
\hline 
 \citep{xie2021complexity} & \xmark &  $\bigO(\varepsilon^{-7})$  & \eqref{eq:lagrange-eps-focp} and \eqref{eq:lagrange-eps-socp} & $\min f(x)$ s.t. $h(x)=0$  & \cmark \\ 
\hline 
 \citep{cifuentes2019polynomial} & \xmark  & $\bigO(\varepsilon^{-6})$  & \eqref{eq:Cifuentes-critical}  & BM for SDP & \cmark \\ 
\hline 
\citep{andreani2007second}  & \xmark  &  \xmark & \eqref{eq:SOCP} + ineq. & $\min f(x)$ s.t. $h(x)=0, h_2(x) \leq 0$ & \cmark  \\ 
\hline 
\citep{xiao2020class}  & quadratic & \xmark  & \eqref{eq:riemannian-feasible-socp} & $\min f(X)$ s.t. $X\in \St(n,p)$   & \cmark  \\ 
\hline 
%\citep{xiao2021solving} & \xmark & $\bigO(\varepsilon^{-3/2})$ and $\bigO(\varepsilon^{-3})$  & TBDefined & $\min f(X)$ s.t. $X\in \St(n,p)$   & \cmark  \\ 
%\hline 
 \citep{grapiglia2019complexity} & \xmark & $\bigO(\varepsilon^{-2/(\alpha-1)})$, $\alpha>1^\dagger$  & \eqref{eq:FOCP} & $\min f(x)$ s.t. $h(x)=0, h_2(x) \leq 0$ & \xmark  \\ 
 \hline 
 \citep{gao2019parallelizable} & linear &  $\bigO(\varepsilon^{-2})$ & \eqref{eq:FOCP} & $\min f(X)$ s.t. $X\in \St(n,p)$  & \xmark  \\ 
\hline   
 \citep{bai2018analysis} &  linear & $\bigO(\varepsilon^{-4})$  & \eqref{eq:FOCP} & $\min f(x)$ s.t. $h(x)=0$  & \xmark \\ 
\hline 
  \citep{bai2019proximal}& linear  & \xmark & \eqref{eq:riemannian-feasible-focp} & $\min f(h(x))$ s.t. $\Acal(h(x))=b$, $f$ cvx & \xmark \\ 
\hline
\citep{birgin2019complexity}& \xmark  & $\bigO(\log(1/\varepsilon))^\ddagger$ & \eqref{eq:FOCP} + ineq. & $\min f(x)$ s.t. $h(x)=0, h_2(x) \leq 0$& \xmark \\ 
\hline
 \end{tabu}
 \caption{Summary of related works on complexity for constrained optimization. The complexity column gives the total iteration complexity to reach first-order target points and second-order when available. The last column indicates whether second-order critical points are considered. $*)$ The algorithm that we present does not come with a guarantee of local quadratic convergence. However, it is possible to modify it to ensure local quadratic convergence, see Remark~\ref{remark:local_rate}. $\dagger)$ The bound $\bigO(\varepsilon^{-2/(\alpha-1)})$ in~\citep{grapiglia2021complexity} is an outer iteration complexity. $\ddagger)$ The bound $\bigO(\log(1/\varepsilon))$ in~\citep{birgin2019complexity} assumes that the penalty parameters $\beta_k$ remain bounded as $k\to \infty$.}
  \label{table:review}
\end{sidewaystable}  

\section{Properties of Fletcher's augmented Lagrangian}
\label{sec:fletcher-alm}

We now cover properties of the function $g$, Fletcher's augmented Lagrangian (Equation~\ref{eq:g_page1}). In this section, we recall an original result from~\citep{bertsekas1982constrained} which establishes conditions under which the critical points and minimizers of $g$ and~\eqref{eq:P} are equivalent. The core of this section then establishes extensions of this result to the case of approximate critical points. That is, we show that approximate first- and second-order critical points of $g$ are also approximately critical for~\eqref{eq:P} in the sense of~\eqref{eq:FOCP} and~\eqref{eq:SOCP}. 
\paragraph{}
For problem~\eqref{eq:P}, the Lagrangian $\calL(x,\lambda)\colon \calE \times \Rm \to \R$ is defined as 
\begin{align*}
\calL(x,\lambda) &= f(x) - \inner{\lambda}{h(x)},
\end{align*}
where $\lambda\in \Rm$ is called the vector of multipliers.
The augmented Lagrangian $\calL_\beta \colon \calE \times \Rm \to \R$ for some penalty parameter $\beta \geq 0$ is:
\begin{align}
	\calL_\beta(x, \lambda) & = f(x) - \inner{\lambda}{h(x)} + \beta \|h(x)\|^2.
\end{align}
This penalty function has given rise to a number of popular methods for constrained optimization~\citep{bertsekas1982constrained}. \citet{fletcher1970class} proposed a variant, which we denote by $g$ (already shown in Equation~\eqref{eq:g_page1}):
\begin{align}
	g(x) & = \calL_\beta(x, \lambda(x)),
	\label{eq:g}
\end{align}
where $\lambda(x)$ is defined in~\eqref{eq:lambda}.
%\end{definition}
We note that the set $\Dcal$ (Equation~\eqref{eq:D}) is open, and it is easy to verify that $\lambda(\cdot)$ is $C^\infty$ on that set. We also note that, under~\aref{assu:ROI}, the set $\calC$ is included in $\calD$. Therefore, $g$ is also smooth on $\calC$. Fletcher's augmented Lagrangian is a smooth penalty which depends only on $x$, the primal variable. The multipliers are computed as a function of $x$. We define $C_\lambda(x)$ as the operator norm of the differential of $\lambda(\cdot)$. Since $\Ccal$ is assumed compact and $\lambda(\cdot)$ is smooth, this quantity is bounded.
\begin{definition}\label{def:Clambda}
Under~\aref{assu:ROI}, for any $x\in \Ccal$, we define the quantity
\begin{equation*}
 C_\lambda(x) :=\opnorm{\D\lambda(x)}= \sigma_1\left(\D\lambda(x)\right).
\end{equation*}
Additionally, under~\aref{assu:boundedMandC}, we define the constant
\begin{equation*}
 \Clambdabar := \max_{x\in \Ccal} \opnorm{\D\lambda(x)}<\infty.
\end{equation*}
\end{definition}

\begin{definition}
Under~\aref{assu:ROI},	for $x\in \calC$, we define the following quantities
\begin{align}
\beta_1(x) &=  \dfrac{\sigma_1(\D h(x))C_\lambda(x)}{2\sigmamin^2(\D h(x))} \\
\beta_2(x) &=  \dfrac{C_\lambda(x)}{\sigmamin(\D h(x))} \\
\beta_3(x) &= \dfrac{1}{\sigmamin(\D h(x))}.
\end{align}
Additionally, under~\aref{assu:boundedMandC}, we define the  constants $\bar{\beta_i} = \max_{x\in \calC} \beta_i(x)$ for $i = 1,2,3$.
\label{def:beta123}
\end{definition}

The following classical result connects first-order critical points and minimizers of $g$ and~\eqref{eq:P}.
\begin{arxiv}
A proof of this original result using our notations can be found in Appendix~\ref{sec:appendix-bertsekas}.
\end{arxiv}

\begin{proposition}[\citep{bertsekas1982constrained},~Prop.~4.22] 
\label{prop:bertsekas}
	Let $g(x) = \Lcal_\beta(x, \lambda(x))$ be Fletcher's augmented Lagrangian and assume $\M \subset \Dcal$, where $\Dcal = \{ x \in \Ecal : \rank( \D h(x)) = m \}$ and $\M = \left\{ x\in \calE: h(x) = 0\right\}$. 
	\begin{enumerate}
		\item For any $\beta$, if $x$ is a first-order critical point of~\eqref{eq:P}, then $x$ is a first-order critical point of $g$.
		\item Let $x \in \Dcal$ and $\beta > \beta_1(x)$. If $x$ is a first-order critical point of $g$, then  $x$ is a first-order critical point of~\eqref{eq:P}.
		\item Let $x$ be a first-order critical point of~\eqref{eq:P} and let $K$ be a compact set. Assume $x$ is the unique global minimum of $f$ over $\M \cap K$ and that $x$ is in the interior of $K$. Then, there exists $\beta$ large enough such that $x$ is the unique global minimum of $g$ over $K$.
		% Then, for all $\beta \geq \overline \beta_1$, $x$ is the unique global minimum of $g$ over $K$.
		\item Let $x\in \calD$ and $\beta > \beta_1(x)$. If $x$ is a local minimum of $g$, then $x$ is a local minimum of~\eqref{eq:P}.
	\end{enumerate}
\end{proposition}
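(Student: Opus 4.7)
The plan centers on a single identity for $\nabla g$, from which Parts 1, 2, and 4 follow directly, while Part 3 requires an additional compactness argument. Differentiating $g(x) = f(x) - \inner{h(x)}{\lambda(x)} + \beta\norm{h(x)}^2$ and substituting $\nabla f(x) - \D h(x)^*[\lambda(x)] = \grad_{\calM_x}\! f(x)$ from~\eqref{eq:gradfM} yields
\begin{align*}
\nabla g(x) = \grad_{\calM_x}\! f(x) - \D\lambda(x)^*[h(x)] + 2\beta\, \D h(x)^*[h(x)].
\end{align*}
The first summand lies in $\ker \D h(x) = \T_x\calM_x$, a fact I will use to decouple tangential and normal components whenever $x \in \calD$.

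For Part 1, if $h(x) = 0$ and $\grad_\calM\! f(x) = 0$ then all three summands vanish, so $\nabla g(x) = 0$. For Part 2, assume $\nabla g(x) = 0$ at some $x \in \calD$; applying $\D h(x)$ annihilates the tangential term and leaves
\begin{align*}
2\beta\, \D h(x)\D h(x)^*[h(x)] = \D h(x)\D\lambda(x)^*[h(x)].
\end{align*}
Bounding the left side below by $2\beta\,\sigmamin(\D h(x))^2\norm{h(x)}$ and the right side above by $\sigma_1(\D h(x))\, C_\lambda(x)\norm{h(x)}$ gives
\begin{align*}
\norm{h(x)}\bigl(2\beta\,\sigmamin(\D h(x))^2 - \sigma_1(\D h(x))\, C_\lambda(x)\bigr) \leq 0.
\end{align*}
Since $\beta > \beta_1(x)$, the bracketed factor is strictly positive by Definition~\ref{def:beta123}, forcing $h(x) = 0$; substituting back into the identity gives $\grad_{\calM_x}\! f(x) = 0$, and $h(x) = 0$ implies $\calM_x = \calM$, so $x$ is a first-order critical point of~\eqref{eq:P}.

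Part 4 is a one-line consequence of Part 2. The hypothesis $\beta > \beta_1(x)$ and Part 2 give $h(x) = 0$, and $g$ agrees with $f$ on $\calM$ (since $h \equiv 0$ there); combined with $g(x) = f(x)$ and local minimality of $g$ at $x$, this yields $f(y) \geq f(x)$ for all $y \in \calM$ near $x$.

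Part 3 is the main obstacle, since it is global rather than pointwise. I would argue by contradiction: if no sufficient $\beta$ exists, extract sequences $\beta_k \to \infty$ and $y_k \in K \setminus \{x\}$ with $g_{\beta_k}(y_k) \leq g_{\beta_k}(x) = f(x)$, and a convergent subsequence $y_k \to \bar y \in K$ by compactness. Rearranging the defining inequality as $\beta_k\norm{h(y_k)}^2 \leq f(x) - f(y_k) + \inner{h(y_k)}{\lambda(y_k)}$, the right side is uniformly bounded on $K$, so $h(\bar y) = 0$ and, in the limit, $f(\bar y) \leq f(x)$; uniqueness of $x$ as global minimum of $f$ on $\calM \cap K$ forces $\bar y = x$. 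The delicate remaining step is to exclude $y_k \to x$ with $y_k \neq x$: since $x$ is interior to $K$, I would work on a small punctured ball around $x$ and use the LICQ bound $\norm{h(y)} \geq \gamma\,\dist(y,\calM)$ together with the fact that $f > f(x)$ on $\calM \setminus \{x\}$ by uniqueness. Bounding $|\inner{h(y)}{\lambda(y)}|$ by $C\norm{h(y)}$ and $f(y) \geq f(\pi(y)) - L\norm{h(y)}$ (where $\pi(y)$ denotes the nearest feasible point, well-defined near $\calM$ via LICQ), the quadratic $\beta\norm{h(y)}^2$ dominates all linear-in-$h(y)$ terms for $\beta$ sufficiently large. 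Combining this with the strict growth of $f$ along $\calM$ (via a compactness argument splitting the ball into regions where $\pi(y)$ is close to $x$ versus uniformly separated from $x$), one gets $g_\beta > f(x)$ on the punctured neighborhood, contradicting $g_{\beta_k}(y_k) \leq f(x)$ for $y_k$ eventually in that neighborhood.
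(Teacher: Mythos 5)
Your treatment of Parts 1, 2 and 4 is correct and follows the paper's route: the same gradient identity, the same trick of applying $\D h(x)$ to kill the tangential term, and the same one-line reduction of Part 4 to Part 2. (In Part 2 you bound the two normal terms separately by $2\beta\sigmamin^2\norm{h(x)}$ and $\sigma_1 C_\lambda(x)\norm{h(x)}$ where the paper invokes Weyl's inequality on the difference operator $2\beta \D h(x)\D h(x)^* - \D h(x)\D\lambda(x)^*$; both are valid and give the same threshold $\beta_1(x)$.)

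Part 3 is where your plan breaks. Your endgame estimates $g_\beta(y) - f(x) \geq \bigl(f(\pi(y)) - f(x)\bigr) - (L/\gamma + C)\norm{h(y)} + \beta\norm{h(y)}^2$ by bounding $|f(y)-f(\pi(y))|$ and $|\inner{h(y)}{\lambda(y)}|$ \emph{separately} by linear functions of $\norm{h(y)}$, and then hopes the quadratic penalty dominates. It cannot: when $\pi(y)$ is at or near $x$ the feasible gap $f(\pi(y))-f(x)$ is zero (the statement only assumes $x$ is the unique global minimizer of $f$ on $\calM\cap K$, with no quantitative growth), and the quadratic $\beta t^2 - (L/\gamma+C)t$ is negative for $0 < t < (L/\gamma+C)/\beta$, a range that is nonempty for every finite $\beta$. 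The reason the true statement survives is a first-order \emph{cancellation} your bounds discard: for $y = x + v$ with $v$ normal to $\calM$ at $x$, one has $f(y) - f(x) \approx \inner{\nabla f(x)}{v} = \inner{\lambda(x)}{\D h(x)[v]} \approx \inner{\lambda(y)}{h(y)}$, so $f(y) - \inner{h(y)}{\lambda(y)} = f(x) + O(\norm{v}^2)$ and the linear terms never actually compete with the penalty. Making this rigorous requires a second-order Taylor argument, not the triangle inequality. The paper avoids the issue entirely: it takes the competing points $x_k$ to be \emph{global minimizers} of $g_{\beta_k}$ over $K$ (which exist by compactness and can be chosen $\neq x$ under the contradiction hypothesis); once $x_k \to x$ lands in an open ball $B$ with $\mathrm{cl}(B)\subset K$ interior to $K$, each $x_k$ is an unconstrained local minimizer of $g$, hence satisfies $\nabla g(x_k)=0$, and Part 2 (applied with $\beta_k$ exceeding $\sup_{y\in\mathrm{cl}(B)}\beta_1(y)$) forces $h(x_k)=0$ and $f(x_k)\leq f(x)$, so uniqueness gives $x_k = x$, a contradiction. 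If you set up the contradiction with arbitrary points $y_k$ satisfying only $g_{\beta_k}(y_k)\leq f(x)$, as you do, you forfeit the criticality of $y_k$ and with it the only soft argument available; either switch to minimizers and invoke Part 2, or commit to the second-order expansion.
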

The previous shows that minimizing the function $g$ inside $\calD$ provides a way to find minimizers of~\eqref{eq:P}. However, in practice, algorithms can only find approximate first- and second-order critical points in finite time. With the above proposition, one is left wondering whether such approximate points for $g$ correspond to similarly approximate critical points for~\eqref{eq:P}. The remainder of this section provides such guarantees.

\subsection{Approximate first-order criticality}
In this section, we show that if $\nabla g(x)$ is small at some $x\in \calC$, the point $x$ is approximately first-order critical for~\eqref{eq:P} in the sense of~\eqref{eq:FOCP}. We begin with a straightforward computation of the gradient of $g$. The gradient of the augmented Lagrangian $\calL_\beta$ with respect to $x$ is given by
\begin{align}
	\nabla_x \calL_\beta(x, \lambda) & = \nabla f(x) - \D h(x)^*[\lambda] + 2\beta \D h(x)^*[h(x)] \nonumber\\
									 & = \nabla f(x) - \D h(x)^*[\lambda - 2\beta h(x)].
\end{align}
Owing to~\eqref{eq:gradfM}, we make the following central observation: the gradient of $\calL_\beta$ with respect to its first argument, when evaluated at $(x, \lambda(x))$, splits into orthogonal components; one component in the tangent space $\T_x\calM_x$, and one component in the normal space to $\calM_x$ at $x$:
\begin{align}
	\nabla_x \calL_\beta(x, \lambda(x)) & = \grad_{\calM_x} f(x) + 2\beta  \D h(x)^*[h(x)].
	\label{eq:nablalagrangesplit}
\end{align}
Owing to orthogonality, $\nabla_x \calL_\beta(x, \lambda(x))$ is small if and only if the two terms on the right are small. It takes an easy computation to check that for all $x \in \calD$ we have
\begin{align*}
	\D g(x)[v] & = \D f(x)[v] - \inner{\D \lambda(x)[v]}{h(x)} - \inner{\lambda(x)}{\D h(x)[v]} + 2 \beta \inner{h(x)}{\D h(x)[v]} \\
			& = \inner{\nabla f(x)}{v} - \inner{\D h(x)^*\!\left[\lambda(x) - 2\beta h(x)\right]}{v} - \inner{\D \lambda(x)^*\!\left[h(x)\right]}{v} \\
			& = \inner{\nabla_x \calL_\beta(x, \lambda(x))}{v} - \inner{\D \lambda(x)^*\!\left[h(x)\right]}{v}.
\end{align*}
Thus, for all $x \in \calD$,
\begin{align}
	\nabla g(x) & = \nabla_x \calL_\beta(x, \lambda(x)) - \D \lambda(x)^*\!\left[h(x)\right] \nonumber\\
					 & = \grad_{\calM_x} f(x) + 2\beta \D h(x)^*[h(x)] - \D \lambda(x)^*\!\left[h(x)\right].
					 \label{eq:gradg}
\end{align}
Therefore, for $x\in \calM$, $\nabla g(x) = \grad_\calM f(x)$. Consequently, for any value of $\beta$, if $x$ satisfies the constraints $h(x) = 0$, that is, if $x$ is on the manifold $\calM$, then $x$ is first-order critical for $f$ on $\calM$ (Equation~\eqref{eq:riemannian-feasible-focp}) if and only if $\nabla g(x)=0$.
We now add to this with a claim about approximate first-order critical points of $g$.
\begin{proposition}\label{thm:g-approx-stationary}
Under~\aref{assu:ROI}, take $\varepsilon_1\geq 0$ and $x\in \Ccal$ with $\beta > \max\!\left \lbrace \beta_2(x), \beta_3(x)\right \rbrace$. 
 If $\norm{\nabla g(x)}\leq \varepsilon_1$, then $x$ is an $(\varepsilon_1, 2\varepsilon_1)-$approximate first-order critical point of~\eqref{eq:P} (see~\eqref{eq:FOCP}) as
 \begin{align}
  \norm{h(x)} &\leq \dfrac{\varepsilon_1}{\beta \sigmamin(\D h(x))}  \leq  \varepsilon_1 & \textrm{ and } && \norm{\grad_{\Mcal_x} f(x)} &\leq \left(1+\dfrac{C_\lambda(x)}{\beta \sigmamin(\D h(x))}\right)\varepsilon_1\leq  2\varepsilon_1.
\end{align}
\end{proposition}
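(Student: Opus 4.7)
The plan is to exploit the orthogonal decomposition of $\nabla g(x)$ that is already visible in equation~\eqref{eq:gradg}. Recall that $\grad_{\calM_x} f(x)$ lies in the tangent space $\T_x \calM_x = \ker \D h(x)$, while $\D h(x)^*[h(x)]$ lies in the normal space $\Nrm_x \calM_x = \range(\D h(x)^*)$. The only term on the right-hand side of~\eqref{eq:gradg} that is not aligned with one of these two subspaces is the perturbation $-\D\lambda(x)^*[h(x)]$, and its norm is controlled by $C_\lambda(x)\|h(x)\|$ thanks to Definition~\ref{def:Clambda}. The bulk of the argument is then to treat the tangent and normal components separately.

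Concretely, I would write $\D\lambda(x)^*[h(x)] = T + N$ with $T \in \T_x \calM_x$ and $N \in \Nrm_x \calM_x$, so that $\max(\|T\|,\|N\|) \leq C_\lambda(x)\|h(x)\|$. Then equation~\eqref{eq:gradg} decomposes as
\begin{equation*}
\nabla g(x) = \bigl(\grad_{\calM_x} f(x) - T\bigr) + \bigl(2\beta \D h(x)^*[h(x)] - N\bigr),
\end{equation*}
with the two parentheses orthogonal. By the Pythagorean identity, each is individually bounded in norm by $\|\nabla g(x)\| \leq \varepsilon_1$.

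Next I would convert the normal-component bound into a bound on $\|h(x)\|$. Since $\D h(x)$ has smallest singular value at least $\sigmamin(\D h(x))$ by~\aref{assu:ROI}, we have $\|\D h(x)^*[h(x)]\| \geq \sigmamin(\D h(x))\,\|h(x)\|$. Using the triangle inequality,
\begin{equation*}
2\beta\,\sigmamin(\D h(x))\,\|h(x)\| \leq \varepsilon_1 + C_\lambda(x)\|h(x)\|,
\end{equation*}
which rearranges to $\|h(x)\| \leq \varepsilon_1/\bigl(2\beta\,\sigmamin(\D h(x)) - C_\lambda(x)\bigr)$. The hypothesis $\beta > \beta_2(x) = C_\lambda(x)/\sigmamin(\D h(x))$ implies $2\beta\,\sigmamin(\D h(x)) - C_\lambda(x) > \beta\,\sigmamin(\D h(x))$, and $\beta > \beta_3(x) = 1/\sigmamin(\D h(x))$ then yields the final estimate $\|h(x)\| \leq \varepsilon_1/(\beta\,\sigmamin(\D h(x))) \leq \varepsilon_1$.

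Finally, for the tangent component, triangle inequality gives
\begin{equation*}
\|\grad_{\calM_x} f(x)\| \leq \|\grad_{\calM_x} f(x) - T\| + \|T\| \leq \varepsilon_1 + C_\lambda(x)\|h(x)\|,
\end{equation*}
and substituting the bound on $\|h(x)\|$ just obtained produces $\|\grad_{\calM_x} f(x)\| \leq \bigl(1 + C_\lambda(x)/(\beta\,\sigmamin(\D h(x)))\bigr)\varepsilon_1 \leq 2\varepsilon_1$, with the last inequality again using $\beta > \beta_2(x)$. There is no real obstacle here: the only delicate point is being careful that the $-\D\lambda(x)^*[h(x)]$ term is not automatically normal or tangent, so one must split it before invoking orthogonality.
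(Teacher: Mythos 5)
Your proof is correct and follows essentially the same route as the paper: the same orthogonal tangent/normal splitting of $\nabla g(x)$, the same role for $\beta_2(x)$ and $\beta_3(x)$, and the same final estimates. The only cosmetic difference is that where you separate off $N$ by a triangle inequality before lower-bounding $\|2\beta\,\D h(x)^*[h(x)]\|$, the paper instead applies Weyl's inequality to the combined operator $2\beta\,\D h(x)^* - \Proj_x^\perp\circ\D\lambda(x)^*$; both manipulations yield the identical bound $\|h(x)\|\leq\varepsilon_1/(\beta\,\sigmamin(\D h(x)))$.
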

\begin{proof}
We remember from~\eqref{eq:gradg}	that
\begin{align}
\nabla g(x) &= \grad_{\Mcal_x} f(x) + 2\beta \D h(x)^*[h(x)] - \D\lambda(x)^*[h(x)]\\
&=  \grad_{\Mcal_x} f(x) - \Proj_x \left( \D\lambda(x)^*[h(x)] \right) + 2\beta \D h(x)^*[h(x)] - \Proj_x^\perp \left( \D\lambda(x)^*[h(x)]\right)
\end{align}
where $\Proj_x^\perp = \Id - \Proj_x$, is the orthogonal projection on $\Nrm_x \M_x = \left(\T_x \M_x\right)^\perp$, the normal space to $\M_x$ at $x$. We have decomposed the right-hand side in two tangent and two normal terms with respect to the manifold $\M_x$. By orthogonality, $\norm{\nabla g(x)}\leq \varepsilon_1$ implies that both the tangent and normal components have norm smaller than $\varepsilon_1$. For the normal terms this yields,
\begin{align}
\norm{ 2\beta \D h(x)^*[h(x)] - \Proj_x^\perp \left(\D\lambda(x)^*[h(x)]\right)} &=
\norm{ \left(2\beta \D h(x)^* - \Proj_x^\perp \left(\D\lambda(x)^*\right) \right)[h(x)]} \leq \varepsilon_1.
\label{eq:stationary_g_normal}
\end{align}
Note that $\D h(x)^*$ is nonsingular since $x\in\Ccal$. We show that $\beta$ is large enough so that the operator  $\left(2\beta \D h(x)^* - \Proj_x^\perp \left(\D\lambda(x)^*\right) \right)$ is nonsingular. We use Weyl's inequality to control singular values (\citet[Theorem 3.3.16 (c)]{horntopics}), which states that for two linear operators  $A,B\colon \calE_1 \to \calE_2$, it holds that $\sigma_{q}(A-B) \geq \sigma_{q}(A) - \sigma_{1}(B)$ with $q \leq \min(m,n)$. This allows to write
\begin{equation}\label{eq:weyl}
\sigmamin\!\left(2\beta \D h(x)^* - \Proj_x^\perp \left(\D\lambda(x)^*\right) \right)\! \geq \sigmamin\!\left( 2\beta \D h(x)^* \right)\!- \sigmamax \!\left( \Proj_x^\perp \left(\D\lambda(x)^*\right)\right)\!.
\end{equation}
The assumption on $\beta$ then provides
\begin{align}
 \sigmamin\!\left( 2\beta \D h(x)^* \right)\!-\sigmamax\!\left( \Proj_x^\perp \left(\D\lambda(x)^*\right)\right)\! &\geq 2\beta  \sigmamin\!\left( \D h(x) \right)\!- C_{\lambda}(x)\\
 &> \beta  \sigmamin\!\left( \D h(x) \right)\!>1.
\end{align}
We inject this into~\eqref{eq:stationary_g_normal} to find:
\begin{align}
\norm{h(x)} &\leq \dfrac{\varepsilon_1}{\sigmamin\left(2\beta \D h(x)^* - \Proj_x^\perp \left(\D\lambda(x)^*\right) \right)}\\
&\leq \dfrac{\varepsilon_1}{\beta \sigmamin(\D h(x))} \leq \varepsilon_1.
\end{align}
Now we use the tangent terms:
\begin{align}
\varepsilon_1 &\geq \norm{  \grad_{\Mcal_x} f(x) - \Proj_x \left( \D\lambda(x)^*[h(x)]\right) }\nonumber \\
 &\geq  \norm{  \grad_{\Mcal_x} f(x)}  - \norm{\Proj_x \left( \D\lambda(x)^*[h(x)]\right) }\nonumber \\
  &\geq  \norm{  \grad_{\Mcal_x} f(x)}  - \norm{  \D\lambda(x)^*[h(x)] }\nonumber \\
    &\geq  \norm{  \grad_{\Mcal_x} f(x)}  - C_\lambda(x) \norm{ h(x) } \nonumber\\
 &\geq  \norm{  \grad_{\Mcal_x} f(x)}   -  C_\lambda(x)  \dfrac{\varepsilon_1}{\beta \sigmamin(\D h(x))}.
\end{align}
This allows to conclude $  \norm{  \grad_{\Mcal_x} f(x)}  \leq \varepsilon_1 + \dfrac{ C_\lambda(x) }{\beta \sigmamin(\D h(x))} \varepsilon_1  \leq 2\varepsilon_1$.
\end{proof}

\begin{corollary}
Under~\aref{assu:ROI} and~\aref{assu:boundedMandC}, take $\varepsilon_1 \geq 0$. Let $\beta$ satisfies the global bounds
\begin{align}
\beta &>\bar{\beta_2} & \textrm{ and } && \beta &>\bar{\beta_3},
\label{eq:bound_beta_global}
\end{align}
where $\bar{\beta_2}, \bar{\beta_3}$ are introduced in Definition~\ref{def:beta123}. Any $x\in \calC$ such that $\norm{\nabla g(x)}\leq \varepsilon_1$ is an $(\varepsilon_1, 2\varepsilon_1)-$approximate first-order critical point of~\eqref{eq:P} (see~\eqref{eq:FOCP}) as
\begin{align}
  \norm{h(x)} &\leq \dfrac{\varepsilon_1}{\beta \sigmabar}  \leq  \varepsilon_1 & \textrm{ and } && \norm{\grad_{\Mcal_x} f(x)} &\leq \left(1+\dfrac{\Clambdabar}{\beta \sigmabar}\right)\varepsilon_1 \leq  2\varepsilon_1,
\end{align}
with $\sigmabar \leq \min_{x\in\calC} \sigmamin \left(\D h(x)\right)$ defined in~\aref{assu:ROI}.
\label{corollary:g-approx-stationary}
\end{corollary}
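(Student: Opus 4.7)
The plan is to derive this corollary as a direct uniform version of Proposition~\ref{thm:g-approx-stationary}. The strategy is to verify that the hypotheses of the proposition are met pointwise at every $x \in \calC$ under the global bounds on $\beta$, then replace the pointwise quantities $\sigmamin(\D h(x))$ and $C_\lambda(x)$ appearing in the conclusions with their uniform counterparts $\sigmabar$ and $\Clambdabar$.

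First, I would observe that under~\aref{assu:ROI} and~\aref{assu:boundedMandC}, for every $x\in \calC$ we have $\sigmamin(\D h(x)) \geq \sigmabar$ and $C_\lambda(x) \leq \Clambdabar$, so that by Definition~\ref{def:beta123} the pointwise thresholds satisfy $\beta_2(x) \leq \bar{\beta_2}$ and $\beta_3(x) \leq \bar{\beta_3}$. Consequently, the assumption $\beta > \max\{\bar{\beta_2}, \bar{\beta_3}\}$ in~\eqref{eq:bound_beta_global} implies $\beta > \max\{\beta_2(x), \beta_3(x)\}$ for every $x\in \calC$, which is exactly the hypothesis required to apply Proposition~\ref{thm:g-approx-stationary} at that point.

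Next, I would invoke Proposition~\ref{thm:g-approx-stationary} directly: any $x\in \calC$ with $\norm{\nabla g(x)} \leq \varepsilon_1$ satisfies
\begin{align*}
\norm{h(x)} & \leq \frac{\varepsilon_1}{\beta \sigmamin(\D h(x))} \leq \frac{\varepsilon_1}{\beta \sigmabar}, & \norm{\grad_{\Mcal_x} f(x)} & \leq \left(1+\frac{C_\lambda(x)}{\beta \sigmamin(\D h(x))}\right)\!\varepsilon_1 \leq \left(1+\frac{\Clambdabar}{\beta \sigmabar}\right)\!\varepsilon_1,
\end{align*}
where the second inequality in each bound uses monotonicity in $\sigmamin(\D h(x))$ and $C_\lambda(x)$ together with the uniform estimates of the previous step.

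Finally, to obtain the clean bounds $\varepsilon_1$ and $2\varepsilon_1$, I would use that $\beta > \bar{\beta_3} \geq \beta_3(x) = 1/\sigmamin(\D h(x))$, which yields $\beta\sigmabar > 1$ (since $\beta > 1/\sigmabar$ follows from taking the supremum of $\beta_3$), and similarly $\beta > \bar{\beta_2} \geq C_\lambda(x)/\sigmamin(\D h(x))$ forces $\Clambdabar/(\beta\sigmabar) < 1$. This immediately gives $\varepsilon_1/(\beta\sigmabar) \leq \varepsilon_1$ and $(1+\Clambdabar/(\beta\sigmabar))\varepsilon_1 \leq 2\varepsilon_1$, completing the proof. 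There is no real obstacle here: everything follows from monotonicity and the compactness of $\calC$ that makes the suprema finite. The only small care needed is in the final step to ensure the constants $\bar{\beta_2}, \bar{\beta_3}$ are large enough that the denominators exceed $1$, which is built into the definitions.
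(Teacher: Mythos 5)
Your overall strategy is exactly the intended one: the corollary is a uniformization of Proposition~\ref{thm:g-approx-stationary}, obtained by checking that $\beta>\max\{\bar{\beta_2},\bar{\beta_3}\}$ implies $\beta>\max\{\beta_2(x),\beta_3(x)\}$ for every $x\in\calC$ and then bounding the pointwise constants by their suprema. The paper gives no separate proof, and your first two steps are correct and complete.

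Your final step, however, has the direction of an inequality reversed. You claim that $\beta>\bar{\beta_3}$ yields $\beta>1/\sigmabar$ ``from taking the supremum of $\beta_3$.'' But $\bar{\beta_3}=\max_{x\in\calC}1/\sigmamin(\D h(x))=1/\min_{x\in\calC}\sigmamin(\D h(x))$, and since $\sigmamin(\D h(x))\geq\sigmabar$ for all $x\in\calC$, this gives $\bar{\beta_3}\leq 1/\sigmabar$, not $\geq$. Hence $\beta>\bar{\beta_3}$ does \emph{not} by itself force $\beta\sigmabar>1$ unless $\sigmabar$ is taken to be the tight constant $\min_{x\in\calC}\sigmamin(\D h(x))$. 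The same issue affects your claim that $\beta>\bar{\beta_2}$ forces $\Clambdabar/(\beta\sigmabar)<1$: one has $\bar{\beta_2}=\max_x C_\lambda(x)/\sigmamin(\D h(x))\leq\Clambdabar/\sigmabar$, with possibly strict inequality even for the tight $\sigmabar$ (the maximizer of $C_\lambda$ and the minimizer of $\sigmamin(\D h(\cdot))$ need not coincide). The fix is to not route the final bounds through $\sigmabar$ and $\Clambdabar$ at all: Proposition~\ref{thm:g-approx-stationary} already gives $\norm{h(x)}\leq\varepsilon_1/(\beta\,\sigmamin(\D h(x)))\leq\varepsilon_1$ and $\norm{\grad_{\Mcal_x}f(x)}\leq(1+C_\lambda(x)/(\beta\,\sigmamin(\D h(x))))\varepsilon_1\leq2\varepsilon_1$ pointwise, and the quantities $\varepsilon_1/(\beta\sigmabar)$ and $(1+\Clambdabar/(\beta\sigmabar))\varepsilon_1$ are separately valid upper bounds on $\norm{h(x)}$ and $\norm{\grad_{\Mcal_x}f(x)}$ by monotonicity. (The literal chains displayed in the corollary, asserting $\varepsilon_1/(\beta\sigmabar)\leq\varepsilon_1$ and $(1+\Clambdabar/(\beta\sigmabar))\varepsilon_1\leq2\varepsilon_1$, implicitly assume $\sigmabar$ and $\Clambdabar$ are the tight constants; this imprecision is in the paper's statement, but your proof should not rely on the reversed inequality to justify it.)
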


\subsection{Approximate second-order criticality}
We now turn our attention to approximate second-order critical points of Fletcher's augmented Lagrangian. Similarly to first-order criticality, we investigate connections with~\eqref{eq:SOCP} points for~\eqref{eq:P}. Specifically, we extend the observation that \emph{strict} second-order critical points of~\eqref{eq:P} and $g$ match, provided that $\beta$ is large enough. The non-strict case is less clear: see below. 

The Hessian of $g$ is obtained by taking a directional derivative of~\eqref{eq:gradg}. For any $\dot x \in \calE$,
\begin{align}
	\nabla^2 g(x)[\dot x] & = \nabla^2 f(x)[\dot x]\nonumber \\
							& \quad - \left(\D\!\left( x \mapsto \D \lambda(x)^* \right)\!(x)[\dot x]\right)[h(x)] \nonumber \\
							& \quad - \D \lambda(x)^*[\D h(x)[\dot x]] \nonumber \\
							& \quad - \left(\D\!\left( x \mapsto \D h(x)^* \right)\!(x)[\dot x]\right)[ \lambda(x) - 2\beta h(x)]\nonumber  \\
							& \quad - \D h(x)^*\!\left[ \D\lambda(x)[\dot x] - 2\beta \D h(x)[\dot x] \right].\label{eq:hessian-g}
\end{align}
We begin with a statement about feasible points which connects the Hessian of $g$ and the Riemannian Hessian of $f$ on $\calM$.
\begin{proposition}\label{prop:hessian-g-riemann-feasible}
For all $x\in \M$, with $\Proj_x$ the orthogonal projector from $\calE$ to $\T_x\calM$, we have
\begin{align}\label{eq:rhess_hessg}
\Hess_{\M} f(x) = \left.\left( \Proj_x \circ	\nabla^2 g(x) \circ \Proj_x \right)\!\right\vert_{\T_x\calM}.
\end{align}
Therefore, if $\nabla^2 g(x)\succeq -\varepsilon_2 \Id$, then $ \Hess_{\M}f(x) \succeq -  \varepsilon_2 \Id_{\T_x \calM}$. If $\nabla^2 g(x) \succ 0$, then $\Hess_\M f(x) \succ 0$. 
\end{proposition}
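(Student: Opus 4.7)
The plan is to substitute $h(x)=0$ into the formula~\eqref{eq:hessian-g} for $\nabla^2 g(x)$ and then compose with $\Proj_x$ on both sides, tracking which summands survive. Three natural simplifications occur in sequence. First, the explicit factor $h(x)$ makes the second line of~\eqref{eq:hessian-g} vanish, and the identity $\lambda(x)-2\beta h(x)=\lambda(x)$ turns the fourth line into exactly the familiar $\sum_i \lambda_i(x)\nabla^2 h_i(x)[\dot x]$ piece appearing in~\eqref{eq:hessfM}. Second, when restricting the input $\dot x$ to $\T_x\calM=\ker \D h(x)$, the relations $\D h(x)[\dot x]=0$ kill the third line and the $2\beta\,\D h(x)^*\D h(x)$ portion of the last line. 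Third, the only remaining extra term is $\D h(x)^*\!\left[\D\lambda(x)[\dot x]\right]$, which lies in $\range(\D h(x)^*)=\Nrm_x\calM$ and is therefore annihilated by the outer $\Proj_x$.

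Putting these together, for $\dot x \in \T_x\calM$ I obtain
$$\Proj_x\!\left(\nabla^2 g(x)[\dot x]\right) = \Proj_x\!\left(\nabla^2 f(x) - \sum_{i=1}^m \lambda_i(x)\,\nabla^2 h_i(x)\right)\![\dot x].$$
Since $\Proj_x\dot x=\dot x$ for such $\dot x$, I can insert $\Proj_x$ on the right at no cost, and the resulting expression matches~\eqref{eq:hessfM} verbatim. This establishes the operator identity~\eqref{eq:rhess_hessg}.

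The two eigenvalue statements then follow by a one-line Rayleigh-quotient argument: for any $\dot x\in\T_x\calM$, using $\Proj_x\dot x=\dot x$,
$$\langle \dot x,\Hess_{\calM} f(x)[\dot x]\rangle = \langle \Proj_x\dot x,\nabla^2 g(x)[\Proj_x\dot x]\rangle = \langle \dot x,\nabla^2 g(x)[\dot x]\rangle.$$
Hence a lower bound $\nabla^2 g(x)\succeq -\varepsilon_2\Id$ valid on all of $\calE$ automatically transfers to the restriction on $\T_x\calM$, and strict positive definiteness transfers in the same way.

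No deep difficulty is anticipated: the hypothesis $x\in\calM$ removes the normal-space couplings that would otherwise appear, and \aref{assu:ROI} guarantees $\range(\D h(x)^*) = \Nrm_x\calM$ so the one surviving off-tangent term is indeed killed by $\Proj_x$. The only place that calls for slight care is bookkeeping the two $\Proj_x$'s: the inner one is harmless on tangent inputs, so the relevant identity is really about the outer projection of $\nabla^2 g(x)$ evaluated on $\T_x\calM$, which is what~\eqref{eq:rhess_hessg} asserts.
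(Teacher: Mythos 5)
Your proposal is correct and follows essentially the same route as the paper's proof: substitute $h(x)=0$ into~\eqref{eq:hessian-g}, restrict to $\dot x\in\ker\D h(x)$, and observe that the surviving off-tangent term $\D h(x)^*[\D\lambda(x)[\dot x]]$ lies in the normal space and is therefore eliminated (the paper phrases this as orthogonality to $\dot x$ in the quadratic form, you phrase it as annihilation by the outer $\Proj_x$, which amounts to the same thing by self-adjointness). The identification with~\eqref{eq:hessfM} and the Rayleigh-quotient transfer of the eigenvalue bounds match the paper's argument.
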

\begin{proof}
We show that if $h(x)=0$, then~\eqref{eq:rhess_hessg} holds. Take $\dot x \in \calE$ and plug $h(x) = 0$ into Equation~\eqref{eq:hessian-g}. This gives
\begin{align}
	\nabla^2 g(x)[\dot x] & = \nabla^2 f(x)[\dot x] - \sum_{i = 1}^{m} \lambda_i(x) \nabla^2 h_i(x)[\dot x] \nonumber \\  
							& \quad + 2\beta \D h(x)^*\!\left[ \D h(x)[\dot x] \right] \nonumber \\
							& \quad - \D \lambda(x)^*[\D h(x)[\dot x]] - \D h(x)^*\!\left[ \D\lambda(x)[\dot x] \right].
							\label{eq:hessian_g_feasible}
\end{align}
If, in addition, $\dot x \in \ker \D h(x)$, then
\begin{align*}
	\inner{\dot x}{\nabla^2 g(x)[\dot x]} & = \inner{\dot x}{\nabla^2 f(x)[\dot x]} - \sum_{i = 1}^{m} \lambda_i(x) \inner{\dot x}{\nabla^2 h_i(x)[\dot x]}.
\end{align*}
Since $\ker \D h(x) = \mathrm{T}_x \M$, we conclude from Equation~\eqref{eq:hessfM} that, restricted to $\T_x\calM$,
\begin{align*}
	\Proj_x \circ \nabla^2 g(x) \circ \Proj_x & = \Proj_x \circ \left( \nabla^2 f(x) - \sum_{i = 1}^{m} \lambda_i(x) \nabla^2 h_i(x) \right) \circ \Proj_x = \Hess_\calM f(x). \qedhere
\end{align*}\end{proof}
In particular, for $\varepsilon_2=0$, the above result tells us that, irrespective of $\beta\geq 0$, if $x\in \calM$ satisfies $\nabla g(x) =0$ and $\nabla^2 g(x) \succeq 0$, the point $x$ is second-order critical for $f$ on $\calM$ (Equation~\eqref{eq:riemannian-feasible-socp}). To our knowledge, there is no evidence that the converse is true, namely, we do not know whether at a point $x\in\calM$ that satisfies~\eqref{eq:riemannian-feasible-socp} there exists a $\beta$ large enough such that $\nabla^2 g(x) \succeq 0$. \citet{fletcher1970class} showed that the converse holds for positive definite Hessians.
\begin{arxiv}
A proof can be found in Appendix~\ref{sec:appendix-fletcher}.
\end{arxiv}
\begin{proposition}[\citet{fletcher1970class}]\label{prop:strict-2nd-order-point}
If $x\in \M$ is a local minimizer of~\eqref{eq:P} with $\Hess_\M f(x) \succ 0$, there exists $\beta$ large enough such that $\nabla^2 g(x) \succ 0$. 
\end{proposition}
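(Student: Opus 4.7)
The plan is to evaluate $\nabla^2 g(x)$ at the feasible point $x$ using formula~\eqref{eq:hessian_g_feasible} and exploit the fact that only the penalty contribution $2\beta\,\D h(x)^*\D h(x)$ scales with $\beta$, and that this contribution acts exclusively on the normal space $\Nrm_x\calM$. All other terms in~\eqref{eq:hessian_g_feasible} are uniformly bounded. The strategy is to split directions into tangent and normal components to $\calM$ at $x$, use the strict positivity of $\Hess_\calM f(x)$ to handle the tangent piece, and choose $\beta$ large enough so the penalty dominates every $\beta$-free normal or cross contribution.

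Concretely, I would first rewrite, using that $\D\lambda(x)^*\D h(x) + \D h(x)^*\D\lambda(x)$ is self-adjoint,
\begin{align*}
\inner{\dot x}{\nabla^2 g(x)[\dot x]} = \inner{\dot x}{A(x)[\dot x]} + 2\beta\,\|\D h(x)[\dot x]\|^2 - 2\inner{\D h(x)[\dot x]}{\D\lambda(x)[\dot x]},
\end{align*}
with $A(x) = \nabla^2 f(x) - \sum_i \lambda_i(x)\nabla^2 h_i(x)$ a $\beta$-independent self-adjoint operator. Then decompose $\dot x = \dot x_T + \dot x_N$ along $\T_x\calM = \ker\D h(x)$ and $\Nrm_x\calM$. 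Two facts drive the argument: since $\D h(x)[\dot x] = \D h(x)[\dot x_N]$, the penalty contribution is at least $2\beta\,\sigmamin(\D h(x))^2\|\dot x_N\|^2$ by~\aref{assu:ROI}; and Equation~\eqref{eq:hessfM} yields $\inner{\dot x_T}{A(x)[\dot x_T]} = \inner{\dot x_T}{\Hess_\calM f(x)[\dot x_T]} \geq c\|\dot x_T\|^2$, where $c > 0$ is the smallest eigenvalue of $\Hess_\calM f(x)$, strictly positive by hypothesis.

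Next I would absorb the remaining contributions via Young's inequality. The mixed term $-2\inner{\D h(x)[\dot x_N]}{\D\lambda(x)[\dot x]}$ is bounded below by $-\beta\|\D h(x)[\dot x_N]\|^2 - C_\lambda(x)^2\|\dot x\|^2/\beta$, consuming only half of the penalty. The tangent-normal cross piece $2\inner{\dot x_T}{A(x)[\dot x_N]}$ is bounded below by $-\tfrac{c}{2}\|\dot x_T\|^2 - (2\opnorm{A(x)}^2/c)\|\dot x_N\|^2$, and the normal-normal piece $\inner{\dot x_N}{A(x)[\dot x_N]}$ by $-\opnorm{A(x)}\|\dot x_N\|^2$. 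Combining these estimates yields
\begin{align*}
\inner{\dot x}{\nabla^2 g(x)[\dot x]} \geq \left(\tfrac{c}{2} - \tfrac{C_\lambda(x)^2}{\beta}\right)\|\dot x_T\|^2 + \left(\beta\,\sigmamin(\D h(x))^2 - K - \tfrac{C_\lambda(x)^2}{\beta}\right)\|\dot x_N\|^2,
\end{align*}
for a $\beta$-independent constant $K$, and choosing $\beta$ large enough makes both coefficients strictly positive, so $\nabla^2 g(x) \succ 0$.

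The main obstacle is pure book-keeping: tracking how each of the contributions in~\eqref{eq:hessian_g_feasible} distributes over the tangent-tangent, tangent-normal, and normal-normal blocks, and calibrating the Young parameters so that the coefficient of $\|\dot x_T\|^2$ remains bounded away from zero while $\beta$ grows. No step is subtle; the only structural input beyond linear algebra is the identification of the tangent-tangent block of $\nabla^2 g(x)$ at $x\in\calM$ with $\Hess_\calM f(x)$ via Equation~\eqref{eq:hessfM} (equivalently Proposition~\ref{prop:hessian-g-riemann-feasible}).
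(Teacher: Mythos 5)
Your proof is correct and follows essentially the same route as the paper's: evaluate $\nabla^2 g(x)$ at the feasible point via~\eqref{eq:hessian_g_feasible}, split $\dot x$ into tangent and normal parts, identify the tangent--tangent block with $\Hess_\calM f(x)\succ 0$, and let the $2\beta\,\sigmamin(\D h(x))^2\|\dot x_N\|^2$ penalty absorb the $\beta$-independent cross and normal contributions. The only difference is cosmetic: you close the argument with Young's inequality to obtain a diagonal lower bound in $(\|\dot x_T\|,\|\dot x_N\|)$, whereas the paper collects the same terms into a $2\times 2$ quadratic form and checks its trace and determinant.
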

\begin{remark}[Local quadratic convergence]\label{remark:local_rate} Assume that~\aref{assu:ROI},~\aref{assu:boundedMandC} and~\aref{assu:x0} hold. For $\beta$ large enough, it is possible to apply Newton's method to Fletcher's augmented Lagrangian to achieve a local quadratic convergence rate towards isolated minimizers of~\eqref{eq:P}. Let $x^*\in \calM$ be a strict second-order critical point for~\eqref{eq:P} satisfying $\grad_\Mcal f(x^*)  = 0$ and $\Hess_\calM f(x^*) \succ 0$. Provided $\beta$ is large enough, $x^*$ satisfies $\nabla g(x^*)=0$ and $\nabla^2 g(x^*) \succ 0$ (Propositions~\ref{prop:bertsekas} and~\ref{prop:strict-2nd-order-point}). 
Take $x_0\in \calC$ close enough to $x^*$, the classical Newton method applied to the function $g$ produces a sequence which converges towards $x^*$ at a quadratic rate, as is discussed  in~\citep{estrin2020implementing}.
\end{remark}
Proposition~\ref{prop:hessian-g-riemann-feasible} can be generalized to infeasible points in $\calC$ using an upper bound on the gradient norm of $g$.
\begin{proposition}\label{thm:approx_stationarity_hessian}
Under~\aref{assu:ROI},~take $x\in \Ccal $ with $\beta> \max \left \lbrace \beta_2(x),\beta_3(x)\right\rbrace$. Assume $\norm{\nabla g(x)}\leq \varepsilon_1$  so that Proposition~\ref{thm:g-approx-stationary} applies at $x$. If $\nabla^2 g(x) \succeq - \varepsilon_2 \Id$, then $x$ is an $(\varepsilon_1, 2\varepsilon_1,\varepsilon_2 + C(x)\varepsilon_1)-$approximate second-order critical point of~\eqref{eq:P} (see~\eqref{eq:SOCP}) as
\begin{equation}
\Hess_{\M_x}f(x) \succeq  -(\varepsilon_2  + C(x)  \varepsilon_1) \Id,
\end{equation}
where $C(x) =2\opnorm{\left(\D\!\left( x \mapsto \D h(x)^* \right)\!(x)\right)}/ \sigmamin(\D  h(x)) + \opnorm{\left(\D\!\left( x \mapsto \D \lambda(x)^* \right)\!(x)\right)}.$
\end{proposition}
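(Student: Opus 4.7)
The plan is to evaluate the quadratic form of $\nabla^2 g(x)$ on the tangent space $\T_x\calM_x=\ker\D h(x)$ and show that, up to an error controlled by $\|h(x)\|$, it coincides with the quadratic form of $\Hess_{\calM_x}f(x)$. Concretely, I would pick a unit $\dot x\in \T_x\calM_x$ and plug it into Equation~\eqref{eq:hessian-g}. Two of the five terms drop immediately: $\D\lambda(x)^*[\D h(x)[\dot x]]$ vanishes because $\D h(x)[\dot x]=0$, and $\langle \dot x,\D h(x)^*[\D\lambda(x)[\dot x]-2\beta\D h(x)[\dot x]]\rangle = \langle \D h(x)[\dot x],\D\lambda(x)[\dot x]-2\beta\D h(x)[\dot x]\rangle = 0$ by the same reason. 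This mirrors the simplification already used in Proposition~\ref{prop:hessian-g-riemann-feasible}, but now for arbitrary $x\in\calC$ rather than $x\in\calM$.

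Next, I would rewrite the $\lambda(x)$-part of term~4 to match the standard Hessian-of-the-Lagrangian expression appearing in~\eqref{eq:hessfM}. The product rule applied to $x\mapsto \D h(x)^*[\lambda(x)] = \sum_i \lambda_i(x)\nabla h_i(x)$ gives
\begin{align*}
\bigl(\D(x\mapsto\D h(x)^*)(x)[\dot x]\bigr)[\lambda(x)]
= \sum_{i=1}^m \lambda_i(x)\nabla^2 h_i(x)[\dot x] + \D h(x)^*[\D\lambda(x)[\dot x]] - \D h(x)^*[\D\lambda(x)[\dot x]],
\end{align*}
so after pairing with $\dot x\in\ker\D h(x)$ the $\D h(x)^*$ pieces disappear and only the Hessian-of-Lagrangian contribution survives. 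Taken together, the inner product $\langle \dot x,\nabla^2 g(x)[\dot x]\rangle$ equals $\langle\dot x,\Hess_{\calM_x}f(x)[\dot x]\rangle$ plus exactly two residual terms that carry an explicit factor of $h(x)$, namely
\begin{align*}
-\,\langle\dot x,(\D(x\mapsto \D\lambda(x)^*)(x)[\dot x])[h(x)]\rangle
\;+\; 2\beta\,\langle\dot x,(\D(x\mapsto \D h(x)^*)(x)[\dot x])[h(x)]\rangle.
\end{align*}

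To conclude, I would use the assumption $\nabla^2 g(x)\succeq -\varepsilon_2\Id$ to lower-bound $\langle\dot x,\nabla^2 g(x)[\dot x]\rangle$ by $-\varepsilon_2$, and Cauchy--Schwarz together with the operator-norm definitions in $C(x)$ to bound each residual term by a constant times $\|h(x)\|$. The gradient bound from Proposition~\ref{thm:g-approx-stationary} gives $\|h(x)\|\le \varepsilon_1/(\beta\sigmamin(\D h(x)))$; since $\beta>\beta_3(x)=1/\sigmamin(\D h(x))$, the non-$\beta$-scaled residual is at most $\|\D(x\mapsto\D\lambda(x)^*)(x)\|_{\mathrm{op}}\,\varepsilon_1$, while the $\beta$-scaled residual telescopes to $2\|\D(x\mapsto\D h(x)^*)(x)\|_{\mathrm{op}}\,\varepsilon_1/\sigmamin(\D h(x))$. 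Summing yields the constant $C(x)$ exactly as defined in the statement, hence $\langle\dot x,\Hess_{\calM_x}f(x)[\dot x]\rangle \ge -\varepsilon_2-C(x)\varepsilon_1$ for every unit $\dot x\in\T_x\calM_x$, which is the desired semi-definiteness bound.

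The main obstacle, I expect, is the algebraic bookkeeping in the second step: correctly recognizing that term~4 of~\eqref{eq:hessian-g} decomposes into a ``clean'' Hessian-of-Lagrangian piece (which recombines with $\nabla^2 f$ to form $\Hess_{\calM_x}f$) plus a normal-space piece that would be killed by $\Proj_x$ even in the infeasible case. Everything downstream is a transparent application of Cauchy--Schwarz and the already-established bound on $\|h(x)\|$; the only technical care needed is to ensure that $\beta$ being greater than both $\beta_2(x)$ and $\beta_3(x)$ is in fact sufficient so that Proposition~\ref{thm:g-approx-stationary} applies and so that the $2\beta\,h(x)$-scaling cancels cleanly against $1/\sigmamin(\D h(x))$.
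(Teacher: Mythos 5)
Your proposal is correct and follows essentially the same route as the paper's proof: restrict the quadratic form of $\nabla^2 g(x)$ to $\ker \D h(x)$, observe that the terms carrying $\D h(x)[\dot x]$ or landing in $\spann(\D h(x)^*)$ vanish, recognize that the remaining derivative of $x\mapsto \D h(x)^*$ applied to $\lambda(x)$ reconstitutes $\Hess_{\calM_x}f(x)$, and bound the two residual terms carrying $h(x)$ via $\|h(x)\|\le \varepsilon_1/(\beta\sigmamin(\D h(x)))$ from Proposition~\ref{thm:g-approx-stationary}, which yields exactly the constant $C(x)$. The only cosmetic difference is your add-and-subtract presentation of the product rule, which the paper states directly as $-\left(F_h(x)[\dot x]\right)[\lambda(x)-2\beta h(x)] = -\sum_i (\lambda_i(x)-2\beta h_i(x))\nabla^2 h_i(x)[\dot x]$.
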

\begin{proof}
Since $x\in \calC$, for any $\dot x \in \T_x\Mcal_x$, Equation~\eqref{eq:hessfM} gives the Riemannian Hessian of $f$ at $x$ and yields
\begin{equation}
\langle \dot x, \Hess_{\Mcal_x} f(x)[\dot x]\rangle  = \left\langle \dot x , \nabla^2 f(x)[\dot x] -  \sum_{i=1}^m \lambda_i(x) \nabla^2 h_i(x)[\dot x]\right\rangle.
\end{equation}
By assumption, for any $\dot x \in \calE$,
\begin{equation}
\langle \dot x, \nabla^2 g(x)[\dot x]\rangle \geq - \varepsilon_2 \norm{\dot x}^2.
\end{equation}
In Equation~\eqref{eq:hessian-g}, take $\dot x \in \T_x \M_x = \ker (\D h(x))$ and remember that the span of $\D h(x)^*$ is orthogonal to $\T_x \M_x$. This gives
\begin{align}
\nonumber
\left(\Proj_x \circ \nabla^2 g(x) \circ \Proj_x \right)\! [\dot x] &= \Proj_x \circ \left( \nabla^2 f(x)[\dot x]\right. \\
& \quad - \left(\D\!\left( x \mapsto \D h(x)^* \right)\!(x)[\dot x]\right)[ \lambda(x) - 2\beta h(x)] \\ \nonumber
& \quad - \left(\D\!\left( x \mapsto \D \lambda(x)^* \right)\!(x)[\dot x]\right)[h(x)] \Big).
\end{align}
For clarity, we write $F_h(x) =  \D\!\left( x \mapsto \D h(x)^* \right)\!(x)$ and $F_\lambda(x) =  \D\!\left( x \mapsto \D \lambda(x)^* \right)\!(x)$. We compute the derivative
\begin{equation}
 - \left(F_h(x)[\dot x]\right)[ \lambda(x) - 2\beta h(x)] = - \sum_{i=1}^m (\lambda_i(x)- 2\beta h_i(x)) \nabla^2 h_i(x)[\dot x],
\end{equation}
which gives
\begin{align}
\nonumber
\inner{\dot x}{ \Proj_x \circ \nabla^2 g(x) \circ \Proj_x [\dot x]} &= \inner{\dot x}{ \nabla^2 f(x)[\dot x]  - \sum_{i=1}^m \lambda_i(x) \nabla^2 h_i(x)[\dot x] }\nonumber\\
& \quad + \langle \dot x, 2\beta \left(F_h(x)[\dot x]\right)[ h(x)]\rangle\nonumber\\
& \quad - \langle \dot x,  \left(F_\lambda(x)[\dot x]\right)[h(x)]\rangle\nonumber \\ 
&\geq -\varepsilon_2 \norm{\dot x}^2.
\end{align}
The formula for $\Hess_{\M_x} f(x)$ has appeared on the right-hand side. Using $ \norm{h(x)} \leq \dfrac{\varepsilon_1}{\beta \sigmamin(\D h(x))}  \leq  \varepsilon_1$ from Proposition~\ref{thm:g-approx-stationary}, we conclude with
\begin{align}
  \inner{ \dot x}{  \nabla^2 f(x)[\dot x]  - \sum_{i=1}^m \lambda_i(x) \nabla^2 h_i(x)[\dot x] } &\geq -2\beta \left\langle \dot x,  \left(F_h(x)[\dot x]\right)[ h(x)]\right \rangle \nonumber \\ 
 &\quad + \left\langle \dot x, \left(F_\lambda(x)[\dot x]\right)[h(x)] \right\rangle -\varepsilon_2 \norm{\dot x}^2\nonumber \\
 &\geq -2\beta \opnorm{F_h(x)}  \norm{\dot x}^2\norm{ h(x)}\nonumber\\ 
 &\quad - \opnorm{ F_\lambda(x)} \norm{\dot x}^2 \norm{h(x)}  -\varepsilon_2 \norm{\dot x}^2\nonumber \\
  &\geq -2\beta \opnorm{F_h(x)} \norm{\dot x}^2 \dfrac{\varepsilon_1}{\beta \sigmamin(\D h(x))}\nonumber \\ 
 &\quad - \opnorm{ F_\lambda(x)} \norm{\dot x}^2 \varepsilon_1  -\varepsilon_2 \norm{\dot x}^2\nonumber \\  
 & \geq -\varepsilon_2 \norm{\dot x}^2 - \left(2  \opnorm{F_h(x)}/\sigmamin(\D h(x))\right. \\
 &\quad  \left. +  \opnorm{F_\lambda(x)} \right) \varepsilon_1 \norm{\dot x}^2.\qedhere
\end{align}
\end{proof}
\begin{corollary}
Under~\aref{assu:ROI} and~\aref{assu:boundedMandC}, let $\beta>\max\left\lbrace \bar{\beta_2},\bar{\beta_3} \right\rbrace$. Take $x\in \Ccal $ with $\norm{\nabla g(x)}\leq \varepsilon_1$ so that Corollary~\ref{corollary:g-approx-stationary} applies. If $\nabla^2 g(x) \succeq - \varepsilon_2 Id$, then $x$ is an $(\varepsilon_1, 2\varepsilon_1,\varepsilon_2 + C\varepsilon_1)-$approximate second-order critical point of~\eqref{eq:P} (see~\eqref{eq:SOCP}) as
\begin{equation}
\Hess_{\M_x}f(x) \succeq  -(\varepsilon_2  + C  \varepsilon_1) \Id,
\end{equation}
where $C = \max_{x\in \calC} 2\opnorm{\left(\D\!\left( x \mapsto \D h(x)^* \right)\!(x)\right)}/ \sigmabar + \opnorm{\left(\D\!\left( x \mapsto \D \lambda(x)^* \right)\!(x)\right)}$. 
%with $\sigmabar \leq \min_{x\in\calC} \sigmamin \left(\D h(x)\right)$ and $\bar{\beta_2}, \bar{\beta_3}$ are defined in Definition~\ref{def:beta123}.
\label{corollary:approx_stationarity_hessian-global}
\end{corollary}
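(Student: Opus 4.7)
The plan is to derive this corollary as a direct globalization of Proposition~\ref{thm:approx_stationarity_hessian}. The pointwise statement in that proposition already delivers the key conclusion $\Hess_{\calM_x} f(x) \succeq -(\varepsilon_2 + C(x)\varepsilon_1)\Id$, so the only task is to replace the $x$-dependent quantities $\beta_2(x)$, $\beta_3(x)$ and $C(x)$ by uniform upper bounds valid on all of $\calC$. Since $\calC$ is compact by~\aref{assu:boundedMandC}, this is essentially a continuity/compactness argument.

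First I would verify that the hypotheses of Proposition~\ref{thm:approx_stationarity_hessian} are met at the given $x \in \calC$. By Definition~\ref{def:beta123}, $\bar{\beta_2} = \max_{y \in \calC} \beta_2(y) \geq \beta_2(x)$ and similarly $\bar{\beta_3} \geq \beta_3(x)$. Hence the assumption $\beta > \max\{\bar{\beta_2}, \bar{\beta_3}\}$ implies $\beta > \max\{\beta_2(x), \beta_3(x)\}$, as required. The bound $\|\nabla g(x)\| \leq \varepsilon_1$ is assumed directly, and Corollary~\ref{corollary:g-approx-stationary} already provides the first-order part, namely $\|h(x)\| \leq \varepsilon_1$ and $\|\grad_{\calM_x} f(x)\| \leq 2\varepsilon_1$. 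So Proposition~\ref{thm:approx_stationarity_hessian} applies verbatim and yields
\begin{equation*}
\Hess_{\calM_x} f(x) \succeq -(\varepsilon_2 + C(x)\varepsilon_1)\Id,
\end{equation*}
with $C(x) = 2\opnorm{\D(x \mapsto \D h(x)^*)(x)}/\sigmamin(\D h(x)) + \opnorm{\D(x \mapsto \D \lambda(x)^*)(x)}$.

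Next I would bound $C(x)$ uniformly over $\calC$. The map $x \mapsto \D h(x)^*$ is smooth on the open set $\calD$ (since $h$ is $C^\infty$), and $\calC \subset \calD$, so $x \mapsto \opnorm{\D(x \mapsto \D h(x)^*)(x)}$ is continuous on the compact set $\calC$ and therefore attains a finite maximum. The same reasoning applies to $\lambda(\cdot)$, which is $C^\infty$ on $\calD$ as noted after Equation~\eqref{eq:g}. Finally, $\sigmamin(\D h(x)) \geq \sigmabar > 0$ on $\calC$ by~\aref{assu:ROI}, so $1/\sigmamin(\D h(x)) \leq 1/\sigmabar$. Combining these three bounds termwise gives $C(x) \leq C$ for the constant $C$ defined in the statement, and $C < \infty$.

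Substituting $C(x) \leq C$ into the pointwise bound and monotonicity of the partial order on self-adjoint operators immediately yields $\Hess_{\calM_x} f(x) \succeq -(\varepsilon_2 + C\varepsilon_1)\Id$, which together with the first-order bounds from Corollary~\ref{corollary:g-approx-stationary} establishes that $x$ is an $(\varepsilon_1, 2\varepsilon_1, \varepsilon_2 + C\varepsilon_1)$-approximate second-order critical point of~\eqref{eq:P}. There is no real obstacle here: the proof is essentially a bookkeeping exercise combining a pointwise proposition with compactness. The one subtlety worth flagging is to confirm that the quantity inside the max defining $C$ is continuous in $x$ on $\calC$, which is immediate from smoothness of $h$, smoothness of $\lambda$ on $\calD \supseteq \calC$, and the uniform lower bound on $\sigmamin(\D h(x))$.
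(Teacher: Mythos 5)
Your proof is correct and follows exactly the route the paper intends: the paper states this corollary without a separate proof, treating it as the immediate globalization of Proposition~\ref{thm:approx_stationarity_hessian} obtained by bounding $\beta_2(x),\beta_3(x)$ by $\bar{\beta_2},\bar{\beta_3}$ and $C(x)$ by $C$ via compactness of $\calC$ and the uniform bound $\sigmamin(\D h(x))\geq\sigmabar$. Your bookkeeping of these substitutions is accurate and complete.
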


\subsection{Property of the region $\calC$}
The algorithms we design and analyse in later sections are initialized in some connected component of $\calC = \{ x \in \calE : \|h(x)\| \leq R \}$, with $R$ as in~\aref{assu:ROI}, and produce iterates which remain in this same connected component. Since $\calC$ may in general have more than one such component, and since we hope in particular that our iterates converge to a feasible point, that is, to a point in $\calM = \{ x \in \calE : h(x) = 0 \}$, it is natural to wonder whether each connected component of $\calC$ intersects with $\calM$. That is indeed the case. We prove the following result in Appendix~\ref{sec:proof_feasibleC}.
\begin{proposition}\label{prop:feasibleC}
Under \aref{assu:ROI}, every connected component of $\calC$ contains a point $\bar{z}\in \calE$ such that $h(\bar{z}) = 0$.
\end{proposition}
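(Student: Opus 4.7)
The plan is to use the negative gradient flow of $\phi(x) := \tfrac{1}{2}\|h(x)\|^2$, whose Euclidean gradient is $\nabla \phi(x) = \D h(x)^*[h(x)]$, starting from an arbitrary $x_0$ in the given connected component of $\calC$. The central observation is that on $\calC$, the operator $\D h(x)^*$ is injective with $\sigmamin(\D h(x)^*) = \sigmamin(\D h(x)) \geq \sigmabar$ under~\aref{assu:ROI}, which gives the pointwise inequality $\|\nabla \phi(x)\| \geq \sigmabar \|h(x)\|$ throughout $\calC$. In particular, $\nabla \phi$ vanishes on $\calC$ only where $h$ does, so intuitively the flow should drive the iterate to $\calM$.

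Next I would show that the solution $x(t)$ of $\dot x = -\nabla \phi(x)$ is defined for all $t \geq 0$ and stays in $\calC$: the map $t \mapsto \phi(x(t))$ is nonincreasing since $\frac{d}{dt}\phi(x(t)) = -\|\nabla \phi(x(t))\|^2$, so $\|h(x(t))\| \leq R$ along the flow. The heart of the argument is an arc-length bound. Computing $\frac{d}{dt}\|h(x(t))\| = -\|\nabla \phi(x(t))\|^2 / \|h(x(t))\|$ and combining with $\|\nabla \phi(x)\| \geq \sigmabar \|h(x)\|$ yields
\[
\|\dot x(t)\| = \|\nabla \phi(x(t))\| \leq -\sigmabar^{-1} \frac{d}{dt}\|h(x(t))\|,
\]
so integration gives $\int_0^t \|\dot x(s)\|\,ds \leq (\|h(x_0)\| - \|h(x(t))\|)/\sigmabar \leq R/\sigmabar$ for every $t$ in the domain of existence. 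This uniform bound keeps the trajectory in a bounded subset of $\calE$ and forbids finite-time blow-up, so the flow is global.

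Finally, finite total arc length forces $x(t)$ to be Cauchy as $t \to \infty$ and to converge to some $\bar z \in \calE$. The Grönwall-type estimate $\frac{d}{dt}\phi(x(t)) \leq -2\sigmabar^2 \phi(x(t))$ gives $\|h(x(t))\| \leq \|h(x_0)\| e^{-\sigmabar^2 t} \to 0$, so $h(\bar z) = 0$ by continuity. Since $t \mapsto x(t)$ is a continuous curve in $\calC$ containing $x_0$, its image lies in the connected component of $x_0$; connected components of $\calC$ are closed, so $\bar z$ belongs to that component as well. The only delicate point is the arc-length estimate; everything else is then routine. Notably, this argument uses only~\aref{assu:ROI}, not the compactness assumption~\aref{assu:boundedMandC}, which matches the scope stated in the proposition.
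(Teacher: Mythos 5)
Your proposal is correct and follows essentially the same route as the paper: the negative gradient flow of $\tfrac{1}{2}\|h\|^2$, the P{\L}-type inequality $\|\nabla\phi(x)\|\geq\sigmabar\|h(x)\|$ on $\calC$, the {\L}ojasiewicz arc-length bound (your $-\sigmabar^{-1}\tfrac{d}{dt}\|h(x(t))\|$ integrand is exactly the paper's $-(\varphi\circ z)'/(\sigmabar\sqrt{2\varphi\circ z})$), global existence via the escape lemma, and Gr\"onwall to conclude $h(\bar z)=0$. The only cosmetic difference is that you deduce convergence of $x(t)$ from the Cauchy property implied by finite length, while the paper settles for an accumulation point; both suffice.
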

The proof relies on the escape lemma from differential equations~\citep[Theorem A.42]{lee2018introduction} with a classical Polyak--{\L}ojasiewicz (P\L) condition along gradient flows. The latter part is similar in nature to Theorem 9 in~\citep{polyak1963gradient}, but the assumptions of the latter are too strong for our purpose. The cited theorem requires (among other things) that the P{\L} condition hold in a ball centered around $x_0$ whose size may be large depending on problem constants, while in our case, P{\L} only holds in $\calC$.

\section{Gradient-Eigenstep algorithm}
\label{sec:algorithm}
In light of the properties of the approximate minimizers of Fletcher's augmented Lagrangian established in the previous section, it would be natural to use an off-the-shelf algorithm for unconstrained minimization on the function $g$. However, we need to ensure that the iterates remain in the set $\calC$, which is not automatic. To this end, we present in this section an optimization algorithm to minimize $g$ that is designed to remain in $\Ccal$, the region of interest where $\lambda(x)$ is well defined. The algorithm alternates between gradient steps (first-order) and eigensteps (second-order) to reach approximate second-order critical points of $g$, as described in~\citep[Section 3.6]{wright2022optimization}. If the gradient norm of $g$ is large, a gradient step on $g$ is used. If the gradient of $g$ is below a tolerance, the algorithm follows a direction of negative curvature of the Hessian of $g$. Gradient steps and eigensteps must fulfil two purposes: they must guarantee a sufficient decrease of the penalty $g$ and also ensure that the next iterate remains inside $\calC$. This is detailed in Algorithm~\ref{algo:gradient-eigenstep}. Given values $\varepsilon_1 >0, \varepsilon_2 >0$, the algorithm returns a point which satisfies $\norm{\nabla g(x)} \leq \varepsilon_1$ and $\lambdamin \left( \nabla^2 g(x)\right) \geq -\varepsilon_2$. This ensures that $x$ is an $(\varepsilon_1, \varepsilon_2 + C(x)\varepsilon_1)$-SOCP of~\eqref{eq:P} according to Proposition~\ref{thm:approx_stationarity_hessian}. 

Whenever $\norm{\nabla g(x)}>\varepsilon_1$, a gradient step is used and we require that the step-length $\alpha$ satisfies a classical Armijo sufficient decrease condition:
\begin{equation}
g(x) - g(x - \alpha \nabla g(x)) \geq  c_1 \alpha \norm{\nabla g(x)}^2,
\label{eq:decrease1st}
\end{equation}
for some $0<c_1<1$. 
The backtracking procedure for gradient steps is presented in Algorithm~\ref{algo:1st-backtracking}. This is a classical backtracking modified to additionally ensure that the iterates stay in $\Ccal$, which is always possible for small enough steps, as we show in Proposition~\ref{prop:t1}.

Given $x\in \Ccal$ with $\norm{\nabla g(x)}\leq \varepsilon_1$ and $\lambdamin\left(\nabla^2 g(x)\right) < -\varepsilon_2$, a second-order step must be applied. We compute a unit-norm vector $d\in \calE$ such that $\langle d , \nabla^2 g(x)[d]\rangle < - \varepsilon_2 \norm{d}^2$. To ensure sufficient decrease, we wish to find $\alpha>0$ such that
\begin{equation}
g(x) - g(x + \alpha d) \geq  - c_2 \alpha^2 \langle d, \nabla^2 g(x)[d]\rangle,
\label{eq:decrease2nd}
\end{equation}
for some $0<c_2<1/2$. In Algorithm~\ref{algo:2nd-backtracking},	we detail the backtracking used for second-order steps. It is designed to ensure that~\eqref{eq:decrease2nd} holds and additionally that the steps are small enough for the iterates to remain in $\calC$, which is possible as we show in Proposition~\ref{prop:t2}.

We define some bounds on the derivatives of $g$, which are finite due to the smoothness of $g$ in $\calC$ and boundedness of $\calC$ (\aref{assu:boundedMandC}). 
\begin{definition}\label{def:Lg_Mg}
Under~\aref{assu:ROI} and~\aref{assu:boundedMandC}, define the constants
\begin{align}
\label{eq:Lg_M_g}
L_g &= \underset{x\in \calC}{\max} \opnorm{\nabla^2 g(x)} && \textrm{ and } & M_g =  \underset{x\in \calC}{\max} \opnorm{\nabla^3 g(x)}.
\end{align}
\end{definition}

\subsection{Algorithm}
We define Algorithm~\ref{algo:gradient-eigenstep}, a procedure which combines first- and second-order steps to minimize $g$ up to approximate second-order criticality if $\varepsilon_2 < \infty$. Setting $\varepsilon_2 = \infty$ gives a first-order version of the algorithm. To run Algorithm~\ref{algo:gradient-eigenstep}, we assume that the value of the penalty parameter $\beta$ does not change and is large enough in the following sense.
\begin{assumption}
Under~\aref{assu:ROI} and~\aref{assu:boundedMandC}, $\beta$ is chosen such that $\beta >  \betabar$ with
\begin{equation}
\betabar := \max \left\lbrace \betabar_1, \betabar_2, \betabar_3 \right\rbrace,
\label{eq:beta-critical}
\end{equation}
where $\betabar_i$ for $i=1,2,3$ are defined in Definition~\ref{def:beta123}.
\label{assu:beta_large_enough}
\end{assumption}
In Section~\ref{sec:plateau-scheme}, we show how this assumption can be removed, using an adaptive scheme for $\beta$.  

\begin{algorithm}
\caption{Gradient-Eigenstep} \label{algo:gradient-eigenstep}
\begin{algorithmic}[1]
\State \textbf{Given:} Functions $f$ and $h$, $x_0\in \Ccal$, $\beta >0$, $0 \leq \varepsilon_1 \leq R/2$ and $\varepsilon_2 \geq 0$.
\State Set $k \leftarrow 0$
\While{no optional stopping criterion triggers}
\If{$ \|\nabla  g(x_{k}) \| > \varepsilon_1$}
\State $x_{k+1} = x_k - t \nabla g(x_k)$ with $t$ given by Algorithm~\ref{algo:1st-backtracking}
\ElsIf{$\varepsilon_2 <\infty$}
\If{ $\lambdamin(\nabla^2 g(x_k)) < - \varepsilon_2$}
\State Find $d\in \calE$ such that  $\langle d, \nabla^2 g(x_k)[d]\rangle < -\varepsilon_2 \norm{d}^2$, $\langle d, \nabla g(x_k) \rangle \leq 0$ and $\norm{d} = 1$.
\State $x_{k+1} = x_k + t d$ where $t$ is given by Algorithm~\ref{algo:2nd-backtracking}.
\Else
\State \Return $x_k$ \Comment{$\norm{\nabla g(x_k)}\leq \varepsilon_1$ and $\nabla^2 g(x_k) \succeq - \varepsilon_2 \Id$}
\EndIf
\Else
\State \Return $x_k$  \Comment{$\norm{\nabla g(x_k)}\leq \varepsilon_1$}
\EndIf
\State $k \leftarrow k +1$
\EndWhile
\end{algorithmic}
\end{algorithm}

\begin{algorithm}
\caption{Gradient step backtracking, modified to stay in $\Ccal$\label{algo:1st-backtracking}}
\begin{algorithmic}[1]
\State \textbf{Given:} $x\in \calC$, $\alpha_{01}>0$, $0<c_1<1$, $0<\tau_1<1$.
\State Set $\alpha \leftarrow \alpha_{01}$
\While{true}
\If{$g(x) - g(x - \alpha \nabla g(x)) \geq  c_1 \alpha \norm{\nabla g(x)}^2$ \textbf{and} $x - \alpha \nabla g(x) \in \Ccal$}
\State \Return $\alpha$
\Else
\State $\alpha  \leftarrow \tau_1 \alpha$
\EndIf
\EndWhile
\end{algorithmic}
\end{algorithm}

\begin{algorithm}
\caption{Eigenstep backtracking, modified to stay in $\Ccal$\label{algo:2nd-backtracking}}
\begin{algorithmic}[1]
\State \textbf{Given:} $x\in \calC$, unit-norm $d\in \calE$, $\alpha_{02}>0$, $0<c_2<1/2$, $0<\tau_2<1$.
\State Set $\alpha \leftarrow \alpha_{02}$
\While{true}
\If{$g(x) - g(x + \alpha d) \geq  - c_2 \alpha^2 \langle d, \nabla^2 g(x)[d]\rangle$ \textbf{and} $x + \alpha d \in \Ccal$}
\State \Return $\alpha$
\Else
\State $\alpha  \leftarrow \tau_2 \alpha$
\EndIf
\EndWhile
\end{algorithmic}
\end{algorithm}
\FloatBarrier

\subsection{First-order steps}
We show that small enough gradient steps remain in the set $\calC$ (defined in~\aref{assu:ROI}).
\begin{proposition}
Assume~\aref{assu:ROI} holds with constant $R$ and~\aref{assu:lipschitzh} holds with constant $C_h$. Then, for all $x\in \calC$, if $\beta >\beta_1(x)$, it holds that $ x - t \nabla g(x)$ is in $\calC$ for all $t$ in the interval $[0,t_1(x)]$ where $t_1(x)$ is defined by 
\begin{equation}
 t_1(x) :=\min \!\left(  \sqrt{\dfrac{R}{2C_h}}\dfrac{1}{\norm{\nabla g(x)}}    ,\dfrac{(2 \beta \sigmamin(\D h(x))^2 - \sigma_1(\D h(x)) C_\lambda(x))R}{2C_h \norm{\nabla g(x)}^2}, \dfrac{1}{2\beta \opnorm{\D h(x)}^2} \right),
\label{eq:t_1}
\end{equation}
where $C_\lambda(x) = \opnorm{\D \lambda(x)}$ (Definition~\ref{def:Clambda}).
\label{prop:t1}
\end{proposition}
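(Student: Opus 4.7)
I would first apply \aref{assu:lipschitzh} with the vector $v = -t\nabla g(x)$, giving
\[
\|h(x - t\nabla g(x))\| \leq \|h(x) - t\, \D h(x)[\nabla g(x)]\| + C_h t^2 \|\nabla g(x)\|^2,
\]
and then exploit the structure of $\nabla g(x)$ provided by Equation~\eqref{eq:gradg}. The key algebraic observation is that the tangential component $\grad_{\calM_x} f(x)$ of the decomposition $\nabla g(x) = \grad_{\calM_x} f(x) + 2\beta\, \D h(x)^*[h(x)] - \D\lambda(x)^*[h(x)]$ lies in $\T_x \calM_x = \ker \D h(x)$; applying $\D h(x)$ therefore kills it and leaves
\[
h(x) - t\, \D h(x)[\nabla g(x)] = T_t[h(x)], \qquad T_t := I_m - 2\beta t\, \D h(x)\D h(x)^* + t\, \D h(x)\D\lambda(x)^*,
\]
so that $\|h(x - t\nabla g(x))\| \leq \opnorm{T_t}\,\|h(x)\| + C_h t^2 \|\nabla g(x)\|^2$.

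Next I would bound $\opnorm{T_t}$. The third clause $t \leq 1/(2\beta \opnorm{\D h(x)}^2)$ of the definition of $t_1(x)$ ensures that the symmetric PSD operator $2\beta t\, \D h(x)\D h(x)^*$ has all eigenvalues (which equal $2\beta t\, \sigma_i(\D h(x))^2$) inside $[0,1]$, so that $\opnorm{I_m - 2\beta t\, \D h(x)\D h(x)^*} = 1 - 2\beta t\, \sigmamin(\D h(x))^2$. Adding the perturbation $t\, \D h(x)\D\lambda(x)^*$, whose operator norm is at most $t\, \sigma_1(\D h(x))\, C_\lambda(x)$, via the triangle inequality gives $\opnorm{T_t} \leq 1 - a t$, where $a := 2\beta\, \sigmamin(\D h(x))^2 - \sigma_1(\D h(x))\, C_\lambda(x)$; positivity of $a$ follows from $\beta > \beta_1(x)$. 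Since $a \leq 2\beta\, \sigma_1(\D h(x))^2$, the same third clause forces $a t \leq 1$, so $1 - a t \in [0,1]$.

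Finally I would conclude $\|h(x - t\nabla g(x))\| \leq R$ by a short case split that absorbs the first two clauses of $t_1(x)$. If $\|h(x)\| \leq R/2$, the first clause $t \leq \sqrt{R/(2C_h)}/\|\nabla g(x)\|$ gives $C_h t^2 \|\nabla g(x)\|^2 \leq R/2$, so that the sum is at most $(1-at)\cdot R/2 + R/2 \leq R$. If instead $\|h(x)\| > R/2$, the second clause $t \leq aR/(2C_h \|\nabla g(x)\|^2)$ gives $C_h t^2 \|\nabla g(x)\|^2 \leq aRt/2 < at\,\|h(x)\|$, so the sum collapses to $(1-at)\|h(x)\| + at\,\|h(x)\| = \|h(x)\| \leq R$.

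The main obstacle is really the first step: without recognizing that $\grad_{\calM_x} f(x) \in \ker \D h(x)$, the linearization error $\D h(x)[\nabla g(x)]$ would inherit $\nabla f$-dependent terms that cannot be controlled by $h$- and $\lambda$-dependent constants, and the three clauses of $t_1(x)$ would not suffice to keep the iterate inside $\calC$. Once that orthogonal cancellation is in hand, the remainder is a routine operator-norm computation plus the elementary case split on $\|h(x)\|$ above.
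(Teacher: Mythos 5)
Your proof is correct and follows essentially the same route as the paper: the decisive cancellation $\D h(x)[\grad_{\calM_x} f(x)] = 0$, the eigenvalue bound on $\I_m - 2\beta t\, \D h(x)\D h(x)^*$ coming from the third clause, and the case split on $\norm{h(x)}$ relative to $R/2$ that matches the first two clauses all mirror the paper's argument. The only cosmetic difference is that you package the contraction as $\opnorm{T_t} \leq 1 - at$ and compare the error term directly against $at\norm{h(x)}$, whereas the paper analyzes the sign of the resulting quadratic in $t$ via its roots; these are the same computation.
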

\begin{proof}
Given $x\in \Ccal$, consider the gradient step $x_t = x - t \nabla g(x)$ for some $t\geq 0$. We wish to find $\tmax>0$ such that $x_t \in \Ccal$ for all $t\in [0,\tmax]$. Using~\aref{assu:lipschitzh}, we have
\begin{align}
h(x_t) &= h(x-t\nabla g(x))\\
 &= h(x) - t \D h(x)[\nabla g(x)] + E\!\left(x, -t \nabla g(x)\right)
\end{align}
where $\norm{E(x,-t\nabla g(x))}\leq C_h \norm{t \nabla g(x)}^2$. Using Equation~\eqref{eq:gradg} gives
\begin{align}
h(x_t) &= h(x) - t\D h(x) \!\left[ \grad_{\M_x} f(x) + 2\beta \D h(x)^*[h(x)] - \D\lambda(x)^* [h(x)]\right] +E(x,-t\nabla g(x)).
\end{align}
Since $\grad_{\M_x} f(x)$ belongs to $ \ker \left( \D h(x)\right)$ by construction, one term cancels:
\begin{align}
h(x_t) &= h(x) - 2\beta t\D h(x) \!\left[   \D h(x)^*[h(x)] \right] + t \D h(x) \!\left[ \D\lambda(x)^* [h(x)]\right] +E(x,-t\nabla g(x))\\
&= \left(\I_m -  2\beta t \D h(x) \circ \D h(x)^* \right) \![h(x)] +  \left( t \D h(x) \circ \D\lambda(x)^* \right)\! [h(x)] + E(x,-t\nabla g(x)).
\end{align}
Let $ \sigma_1 \geq \dots > \sigma_m > 0$ denote the singular values of $\D h(x)$. The eigenvalues of the symmetric operator $\left(\I_m -  2\beta t \D h(x) \circ \D h(x)^* \right) $ are $1-2\beta t \sigma_1^2 \leq \dots \leq 1-2\beta t \sigma_m^2$. All these eigenvalues are smaller than one and are nonnegative provided $0 \leq 1-2\beta t \sigma_1^2$ and $t\geq 0$, or equivalently:
\begin{align}
0 \leq t \leq \dfrac{1}{2 \beta \opnorm{\D h(x)}^2}.
\label{eq:condition_t_1}
\end{align}
Under that assumption, we further find:
\begin{align}
\norm{h(x_t)} &\leq (1-2\beta t \sigma^2_m)\norm{h(x)} + t \sigma_1 \opnorm{\D\lambda(x)^*} \norm{h(x)} + C_h t^2 \norm{\nabla g(x)}^2.
\end{align}
We want to show  $\norm{h(x_t)}\leq R$, which is indeed the case if
\begin{equation}
(1-2\beta t \sigma_m^2)\norm{h(x)} + t \sigma_1 C_\lambda(x) \norm{h(x)} + C_h \norm{\nabla g(x)}^2t^2 \leq R.
\end{equation}
Thus, we seek conditions on $t$ to ensure that the following quadratic inequality in $t$ holds:
\begin{equation}
C_h \norm{\nabla g(x)}^2 t^2 + \left(\sigma_1 C_\lambda(x) - 2 \beta \sigma_m^2 \right) \norm{h(x)} t + \norm{h(x)} - R \leq 0.
\label{eq:Q}
\end{equation}
We branch into two cases. Firstly, consider $\norm{h(x)} \in [R/2, R]$. In this case,~\eqref{eq:Q} holds a fortiori if we remove the independent term $\|h(x)\| - R$ since the latter is nonpositive. By assumption, $\beta > \beta_1(x) = \sigma_1 C_\lambda(x)/2\sigma_m^2$, so the linear term is nonpositive. Therefore, we can upper bound the quadratic by setting $ \norm{h(x)} =R/2$. This shows that
\begin{align}
&C_h \norm{\nabla g(x)}^2 t^2 + \left(\sigma_1 C_\lambda(x) - 2 \beta \sigma_m^2 \right) \norm{h(x)} t  + \norm{h(x)} -R \\ \nonumber
&~~~~~\leq C_h \norm{\nabla g(x)}^2 t^2 + \left(\sigma_1 C_\lambda(x) - 2 \beta \sigma_m^2 \right)\dfrac{R}{2}t.
\end{align}
The above is a convex quadratic with two real roots. It is nonpositive---and~\eqref{eq:Q} is satisfied---if:
\begin{align}
0 \leq t  &\leq \dfrac{ \left(2\beta \sigma_m^2 - \sigma_1 C_\lambda(x)\right)R}{2C_h\norm{\nabla g(x)}^2}.
\label{eq:condition_t_2}
\end{align}
For $\norm{h(x)} \in [0, R/2]$, the linear term in~\eqref{eq:Q} is still nonpositive. Additionally, the constant term of the quadratic is upper bounded by $-R/2$. This establishes
\begin{align}
 C_h \norm{\nabla g(x)}^2 t^2 + \left(\sigma_1 C_\lambda(x) \norm{h(x)} - 2 \beta \sigma_m^2 \norm{h(x)}\right)t  + \norm{h(x)} -R   &\leq C_h \norm{\nabla g(x)}^2 t^2 - \dfrac{R}{2}.
\end{align}
We infer that, for $\norm{h(x)}\in [0, R/2]$, condition~\eqref{eq:Q} is satisfied for
\begin{equation}
0 \leq t\leq \sqrt{\dfrac{R}{2C_h}}\dfrac{1}{\norm{\nabla g(x)}}.
\label{eq:condition_t_3}
\end{equation}
The main claim follows by collecting the conditions in equations~\eqref{eq:condition_t_1},~\eqref{eq:condition_t_2} and~\eqref{eq:condition_t_3}.\end{proof}

We now show that the backtracking in Algorithm~\ref{algo:1st-backtracking} terminates in a finite number of steps and guarantees a sufficient decrease. 
\begin{lemma}[Gradient step decrease]
Take $x\in \Ccal$ and $\beta > \beta_1(x)$. The backtracking procedure in Algorithm~\ref{algo:1st-backtracking} terminates with a step-size $t \geq \tau_1 \min(\underline{\alpha_1}, t_1(x))>0$ where
\begin{equation*}
\underline{\alpha_1} = \min\left(\alpha_{01}, \dfrac{2(1-c_1)}{L_g}\right),
\end{equation*}
with $\alpha_{01}>0$ the initial step size of Algorithm~\ref{algo:1st-backtracking}
and $t_1(x)$ is defined in Equation~\eqref{eq:t_1}.
This guarantees the following decrease:
\begin{equation}
 g(x) -  g(x - t \nabla g(x)) \geq c_1 \tau_1 \min(\underline{\alpha_1}, t_1(x)) \norm{\nabla g(x)}^2.
 \label{eq:decrease-gradient-step}
\end{equation}
\end{lemma}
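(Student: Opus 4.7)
The plan is to treat Algorithm~\ref{algo:1st-backtracking} as a classical Armijo backtracking with one extra hurdle: alongside the standard sufficient decrease condition, we must verify the feasibility condition $x - \alpha \nabla g(x) \in \calC$. The former is governed by the Lipschitz constant $L_g$ of $\nabla g$ on $\calC$ (Definition~\ref{def:Lg_Mg}), while the latter is precisely the content of Proposition~\ref{prop:t1}, which yields the threshold $t_1(x)>0$ since $\beta > \beta_1(x)$.

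First I would establish the familiar quadratic upper bound for $g$: whenever the full segment $[x,\, x-\alpha \nabla g(x)]$ lies in $\calC$, the fundamental theorem of calculus together with $\opnorm{\nabla^2 g}\leq L_g$ on $\calC$ (Definition~\ref{def:Lg_Mg}) gives
\begin{equation*}
g(x - \alpha \nabla g(x)) \leq g(x) - \alpha \|\nabla g(x)\|^2 + \tfrac{L_g}{2}\alpha^2 \|\nabla g(x)\|^2.
\end{equation*}
Rearranging, the Armijo condition~\eqref{eq:decrease1st} holds as soon as $\alpha(1 - L_g\alpha/2) \geq c_1\alpha$, i.e., $\alpha \leq 2(1-c_1)/L_g$. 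Since Proposition~\ref{prop:t1} ensures the whole segment sits in $\calC$ for $\alpha \in [0,t_1(x)]$ (the set $\calC$ is a sublevel set of $\norm{h(\cdot)}$, so convexity of the argument is not needed—the proposition applies along the ray), both conditions checked in Algorithm~\ref{algo:1st-backtracking} are simultaneously satisfied whenever
\begin{equation*}
\alpha \leq \min\!\left(\tfrac{2(1-c_1)}{L_g},\, t_1(x)\right).
\end{equation*}

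Next I would argue termination and the step-size lower bound. The backtracking produces the sequence $\alpha_{01}, \tau_1 \alpha_{01}, \tau_1^2\alpha_{01},\ldots$ and stops at the first iterate satisfying both the Armijo inequality and the inclusion in $\calC$. By the previous paragraph, this must happen by the time $\alpha$ drops below $\min(2(1-c_1)/L_g, t_1(x))$. Contrapositively, every rejected step $\alpha$ must satisfy $\alpha > \min(2(1-c_1)/L_g, t_1(x))$; together with the trivial bound $\alpha \leq \alpha_{01}$ this shows the returned step $t$ obeys $t \geq \tau_1 \min(\alpha_{01}, 2(1-c_1)/L_g, t_1(x)) = \tau_1 \min(\underline{\alpha_1},t_1(x))$. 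Plugging this lower bound into the Armijo inequality~\eqref{eq:decrease1st} yields the claimed decrease~\eqref{eq:decrease-gradient-step}.

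The only subtlety, and the point I expect to be the main obstacle, is that the quadratic upper bound on $g$ requires the entire segment of the step—not merely its endpoint—to remain in $\calC$, since $L_g$ is only defined on $\calC$. Fortunately, Proposition~\ref{prop:t1} is proved by controlling $\|h(x_t)\|$ continuously in $t$, so its conclusion holds for \emph{every} $t\in[0,t_1(x)]$, which is precisely what the descent lemma needs. This is why both thresholds $2(1-c_1)/L_g$ and $t_1(x)$ appear in the final lower bound on the accepted step size.
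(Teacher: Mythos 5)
Your proposal is correct and follows essentially the same route as the paper: Proposition~\ref{prop:t1} guarantees the segment stays in $\calC$ for $\alpha\in[0,t_1(x)]$, the descent lemma with constant $L_g$ gives the Armijo condition for $\alpha\leq 2(1-c_1)/L_g$, and the standard backtracking argument then yields $t\geq\tau_1\min(\underline{\alpha_1},t_1(x))$ and the claimed decrease. The subtlety you flag about the whole segment (not just the endpoint) needing to lie in $\calC$ is indeed the point the paper handles by invoking Proposition~\ref{prop:t1} for every $\alpha$ in the interval.
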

\begin{proof}
From Proposition~\ref{prop:t1},	we know that $x - \alpha \nabla g(x) $ is in $\Ccal$ for every $0 \leq \alpha \leq t_1(x)$. We proceed to show that the Armijo decrease condition~\eqref{eq:decrease1st} is satisfied for any $0 \leq \alpha \leq \min(t_1(x),\underline{\alpha_1})$. For every $0 \leq \alpha \leq t_1(x)$, the norm of the Hessian of $g$ is bounded by the constant $L_g$ (Equation~\eqref{eq:Lg_M_g}), which implies that $\nabla g$ is $L_g$-Lipschitz continuous on the segment that connects $x$ and $x- t_1(x) \nabla g(x)$. Thus, for all $0\leq \alpha\leq t_1(x)$, we have
\begin{align*}
g(x- \alpha \nabla g(x)) &\leq g(x) + \langle -\alpha\nabla g(x),\nabla g(x)\rangle + \dfrac{L_g}{2}\norm{\alpha \nabla g(x)}^2\\
&= g(x) + \!\left(\dfrac{\alpha L_g}{2}-1\right)\!\alpha \norm{\nabla g(x)}^2.
\end{align*}
This is equivalent to $g(x) - g(x- \alpha \nabla g(x)) \geq \!\left(1-\alpha L_g/2\right)\!\alpha \norm{\nabla g(x)}^2$. For $0\leq \alpha \leq 2(1-c_1)/L_g$, we have $(1-\alpha L_g/2)\geq c_1$. Hence, for $0\leq \alpha \leq \min(t_1(x),\underline{\alpha_1})$, condition~\eqref{eq:decrease1st}, $g(x) - g(x-\alpha \nabla g(x)) \geq c_1 \alpha \norm{\nabla g(x)}^2$ is satisfied. Given $\beta >\beta_1(x)$, one readily checks that $t_1(x)$ is positive. Since $\underline{\alpha_1}$ is also positive, there exists a nonempty interval, $]0,\min(\underline{\alpha_1},t_1(x))]$, where the step size satisfies the Armijo condition and defines a next iterate inside $\calC$. Therefore, Algorithm~\ref{algo:1st-backtracking} returns a step $t$ satisfying $t \geq \tau_1 \min(\underline{\alpha_1}, t_1(x))$. In addition, the Armijo condition gives
\begin{align*}
 g(x) -  g(x - t \nabla g(x)) &\geq c_1 t \norm{\nabla g(x)}^2\\
  &\geq c_1 \tau_1 \min(\underline{\alpha_1}, t_1(x)) \norm{\nabla g(x)}^2.\qedhere
\end{align*}
\end{proof}

\subsection{Second-order steps}
 We begin with a result which guarantees small enough steps stay in $\calC$ when $\nabla g(x)$ is small.
\begin{proposition}\label{prop:t2}
Suppose~\aref{assu:ROI} and~\aref{assu:lipschitzh} hold. Take  $x\in \Ccal$ with $\beta>\max \left\lbrace \beta_1(x), \beta_2(x), \beta_3(x)\right\rbrace$.  Assume that $\norm{\nabla g(x)}\leq \varepsilon_1$ for some $\varepsilon_1 \leq R/2$ so that Proposition~\ref{thm:g-approx-stationary} applies. For any $d\in \calE$ with $\norm{d}=1$, the point $x+td$ is in $\Ccal$ for all $t$ in the interval $[0,t_2(x)]$ with $t_2(x)$ defined by
\begin{equation}
t_2(x) := \left(-\sigma_1(\D h(x)) + \sqrt{\sigma_1(\D h(x)) ^2 +2C_hR}\right)/2C_h.
\label{eq:t2}
\end{equation}
\end{proposition}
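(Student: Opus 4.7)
The plan is to mimic the structure of Proposition~\ref{prop:t1}, but now for an arbitrary unit-norm direction $d$ rather than the specific direction $-\nabla g(x)$. Since $d$ is a generic eigenstep direction with no cancellation structure like $\grad_{\M_x}f(x) \in \ker(\D h(x))$, I would not expect the $\beta$-driven contraction used for the gradient step to appear here; instead, the stability of $\|h(x+td)\|$ must come from the fact that $\|h(x)\|$ is already very small whenever an eigenstep is taken, which is precisely the content of Proposition~\ref{thm:g-approx-stationary}.

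First, using \aref{assu:lipschitzh}, I would expand
\begin{equation*}
h(x+td) = h(x) + t\,\D h(x)[d] + E(x,td), \qquad \|E(x,td)\| \leq C_h t^2,
\end{equation*}
and apply the triangle inequality together with $\|\D h(x)[d]\| \leq \sigma_1(\D h(x))\|d\| = \sigma_1(\D h(x))$ to obtain the scalar bound
\begin{equation*}
\|h(x+td)\| \leq \|h(x)\| + \sigma_1(\D h(x))\, t + C_h t^2.
\end{equation*}

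Next, since $\beta > \max\{\beta_2(x),\beta_3(x)\}$ and $\|\nabla g(x)\|\leq \varepsilon_1$, Proposition~\ref{thm:g-approx-stationary} applies at $x$ and yields $\|h(x)\| \leq \varepsilon_1/(\beta\,\sigmamin(\D h(x))) \leq \varepsilon_1$; combined with the hypothesis $\varepsilon_1 \leq R/2$, this gives $\|h(x)\| \leq R/2$. Substituting into the previous bound, the requirement $\|h(x+td)\| \leq R$ (equivalently, $x+td\in\calC$) reduces to the quadratic inequality in $t$:
\begin{equation*}
C_h t^2 + \sigma_1(\D h(x))\, t - \tfrac{R}{2} \leq 0.
\end{equation*}

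Finally, I would solve this convex quadratic: it has a unique positive root, namely
\begin{equation*}
t = \frac{-\sigma_1(\D h(x)) + \sqrt{\sigma_1(\D h(x))^2 + 2 C_h R}}{2 C_h} = t_2(x),
\end{equation*}
and the inequality is satisfied for every $t \in [0, t_2(x)]$, which gives the claim. There is no real obstacle in this argument; the only subtle point is recognizing that the $\beta_1$ mechanism used for gradient steps is unnecessary here, and that the smallness of $\|h(x)\|$ delivered by Proposition~\ref{thm:g-approx-stationary} (hence the need for $\beta > \max\{\beta_2(x),\beta_3(x)\}$ and $\varepsilon_1 \leq R/2$) is what makes the residual travel budget $R/2$ available for a generic direction.
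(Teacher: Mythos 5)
Your proposal is correct and follows essentially the same route as the paper's proof: expand $h(x+td)$ via \aref{assu:lipschitzh}, bound $\|h(x)\|\leq\varepsilon_1\leq R/2$ using Proposition~\ref{thm:g-approx-stationary}, and solve the resulting convex quadratic in $t$ for its positive root. The only cosmetic difference is that the paper keeps $\varepsilon_1$ in the quadratic and bounds $|R-\varepsilon_1|\geq R/2$ at the end, whereas you substitute $R/2$ up front; both yield the same $t_2(x)$.
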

\begin{proof}
 Since $\norm{\nabla g(x)}\leq \varepsilon_1$, Proposition~\ref{thm:g-approx-stationary} ensures $\norm{h(x)}\leq \varepsilon_1$. For $t>0$,~\aref{assu:lipschitzh} yields
\begin{align*}
h(x + td) &= h(x) + t \D h(x)[d] + E(x, td),\\
\norm{h(x + td)} &\leq \norm{h(x)} + t\sigma_1 \norm{d} + C_h t^2 \norm{d}^2\\
&\leq \varepsilon_1 + t \sigma_1 + C_h t^2,
\end{align*}
where $\sigma_1$ is the largest singular value of $\D h(x)$. We want to find the values of $t\geq 0$ for which $\varepsilon_1 + t \sigma_1 + C_h t^2 \leq R$. 
The convex quadratic $ t \mapsto C_h t^2 + \sigma_1 t + \varepsilon_1 -R$ has roots $\left(-\sigma_1 \pm \sqrt{\sigma_1 ^2 -4(\varepsilon_1 - R)C_h}\right)/2C_h$, which for $\varepsilon_1 <R$ are real and of opposite signs. 
Hence, the quadratic is nonpositive for all $t$ such that
\begin{equation*}
0 \leq t \leq \left(-\sigma_1 + \sqrt{\sigma_1 ^2 +4|R - \varepsilon_1|C_h}\right)/2C_h.
\end{equation*}
By assumption, $\varepsilon_1 \leq R/2$ and therefore $x+td$ belongs to $\calC$ for all $t$ such that
\begin{equation*}
0 \leq t \leq \left(-\sigma_1 + \sqrt{\sigma_1 ^2 +4C_hR/2}\right)/2C_h.\qedhere
\end{equation*}
\end{proof}

We now show that the backtracking of Algorithm~\ref{algo:2nd-backtracking} terminates in a finite number of steps and guarantees a sufficient decrease.

\begin{lemma}[Eigenstep decrease]
Take $x\in \calC$ and $\beta > \max\left\lbrace \beta_1(x), \beta_2(x), \beta_3(x)\right \rbrace$ with $\norm{\nabla g(x)} \leq \varepsilon_1$ for some $\varepsilon_1 \leq R/2$. Assume there exists a direction $d\in \Ecal$ such that $\norm{d}=1 $, $\langle d, \nabla^2 g(x)[d]\rangle < - \varepsilon_2 $ for some $\varepsilon_2>0$ and $\langle d, \nabla g(x) \rangle \leq 0$. The backtracking procedure in Algorithm~\ref{algo:2nd-backtracking} terminates with a step size $t \geq \tau_2 \min(\alpha_2(x), t_2(x))>0$ where
\begin{equation*}
\alpha_2(x) = \min\left( \alpha_{02}, \dfrac{3|2c_2-1| |\langle d, \nabla^2 g(x)[d]\rangle|}{M_g}\right),
\end{equation*}
with $\alpha_{02}>0$ the initial step size of Algorithm~\ref{algo:2nd-backtracking}
and $t_2(x)$ is defined in Equation~\eqref{eq:t2}. This ensures the following decrease:
\begin{equation}
g(x) - g(x+td) \geq -c_2 \tau_2^2\min(\alpha_2(x), t_2(x))^2 \langle d, \nabla^2 g(x)[d]\rangle.
\label{eq:decrease-eigenstep}
\end{equation}
\end{lemma}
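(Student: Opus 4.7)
The plan mirrors the proof of the first-order (gradient step) lemma, with Proposition~\ref{prop:t2} playing the role of Proposition~\ref{prop:t1}, and a third-order Taylor expansion of $g$ replacing the second-order one. I would proceed in three steps: bound the step size required to stay in $\calC$, bound the step size required for the Armijo-type sufficient decrease~\eqref{eq:decrease2nd}, and then combine to control the output of the backtracking loop.

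\textbf{Step 1 (staying in $\calC$).} Since $\|\nabla g(x)\|\leq \varepsilon_1\leq R/2$ and $\beta>\max\{\beta_2(x),\beta_3(x)\}$, Proposition~\ref{thm:g-approx-stationary} applies and gives $\|h(x)\|\leq\varepsilon_1\leq R/2$. Then Proposition~\ref{prop:t2} directly yields $x+td\in\calC$ for every $t\in[0,t_2(x)]$.

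\textbf{Step 2 (sufficient decrease via Taylor).} Write $H := \langle d,\nabla^2 g(x)[d]\rangle<-\varepsilon_2<0$. Since $g$ is $C^\infty$ on $\calC$ and $\|\nabla^3 g\|_{\mathrm{op}}\leq M_g$ on $\calC$ (Definition~\ref{def:Lg_Mg}), the third-order Taylor expansion along the segment from $x$ to $x+td$ (which lies in $\calC$ by Step 1) gives, using $\|d\|=1$,
\begin{align*}
g(x+td) \leq g(x) + t\langle d,\nabla g(x)\rangle + \tfrac{t^2}{2} H + \tfrac{M_g}{6} t^3.
\end{align*}
Invoking $\langle d,\nabla g(x)\rangle\leq 0$ and rearranging,
\begin{align*}
g(x) - g(x+td) \geq -\tfrac{t^2}{2} H - \tfrac{M_g}{6} t^3.
\end{align*}
Condition~\eqref{eq:decrease2nd}, $g(x)-g(x+td)\geq -c_2 t^2 H$, therefore holds as soon as $-\tfrac{t^2}{2}H - \tfrac{M_g}{6}t^3 \geq -c_2 t^2 H$, which rearranges (using $-H>0$ and $1/2-c_2>0$) to $t \leq 3(1-2c_2)|H|/M_g$. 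Together with the trivial upper bound $t\leq\alpha_{02}$ coming from the initial step of the backtracking, this yields sufficient decrease for every $t\in[0,\alpha_2(x)]$.

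\textbf{Step 3 (backtracking termination and decrease estimate).} Combining Steps 1 and 2, both of Algorithm~\ref{algo:2nd-backtracking}'s acceptance conditions hold for any $t\in[0,\min(\alpha_2(x),t_2(x))]$, which is a nonempty interval since $\varepsilon_2>0$ and the assumption on $\beta$ make $\alpha_2(x)$ and $t_2(x)$ strictly positive. Because the backtracking shrinks $\alpha$ by a factor $\tau_2\in(0,1)$ at each failure, the first accepted step satisfies $t\geq \tau_2\min(\alpha_2(x),t_2(x))$. Plugging this lower bound into the sufficient decrease condition~\eqref{eq:decrease2nd} (which is valid at the accepted $t$) and using $-H>0$ to monotonically replace $t^2$ by its lower bound yields~\eqref{eq:decrease-eigenstep}.

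The only nontrivial step is the Taylor manipulation in Step 2; the sign bookkeeping around $c_2<1/2$, $H<0$, and the replacement $|2c_2-1|=1-2c_2$ is the main place to be careful, but it is routine once set up. Nothing else requires ideas beyond what was already used for the gradient-step lemma.
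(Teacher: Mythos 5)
Your proposal is correct and follows essentially the same route as the paper's proof: Proposition~\ref{prop:t2} for membership in $\calC$, the cubic Taylor upper bound with constant $M_g$ combined with $\langle d,\nabla g(x)\rangle\leq 0$ to show~\eqref{eq:decrease2nd} holds for all $t\leq\min(\alpha_2(x),t_2(x))$, and the standard backtracking argument to get $t\geq\tau_2\min(\alpha_2(x),t_2(x))$ and the final decrease bound. The sign bookkeeping around $c_2<1/2$ and $H<0$ matches the paper's computation exactly.
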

\begin{proof}
From Proposition~\ref{prop:t2}, the point $x+\alpha d $ is in $\Ccal$ for all $0\leq \alpha \leq t_2(x)$. We show that for all $0\leq \alpha \leq \min(\alpha_2(x), t_2(x))$, the decrease condition~\eqref{eq:decrease2nd} is satisfied. 
For every $0 \leq \alpha \leq t_2(x)$, the norm of the third derivative of $g$ is bounded by the constant $M_g$ (Equation~\eqref{eq:Lg_M_g}), which implies that $\nabla^2 g$ is $M_g$-Lipschitz continuous on the segment that connects $x$ and $x+ t_2(x) d$. Thus, for all $0\leq \alpha\leq t_2(x)$, we have
%For all $\alpha>0$ such that $x + \alpha d \in \mathrm{cl}(\mathrm{conv}(\Ccal))$, the Lipschitz continuity of $\nabla^2 g$ implies
%Assume that $\nabla^2g$ is Lipschitz continuous over  with constant $M_g$.
\begin{align*}
g(x +\alpha d) &\leq g(x) + \alpha \langle d, \nabla g(x) \rangle + \dfrac{\alpha^2}{2}\langle d, \nabla^2 g(x)[d]\rangle + \dfrac{M_g}{6}\alpha^3 \norm{d}^3\\
&\leq g(x) + \dfrac{\alpha^2}{2}\langle d, \nabla^2 g(x)[d]\rangle + \dfrac{M_g}{6}\alpha^3\\
&\leq g(x) + \dfrac{\alpha^2}{2}\left( \langle d, \nabla^2 g(x)[d]\rangle + \dfrac{M_g \alpha }{3} \right).
\end{align*}
The sufficient decrease condition~\eqref{eq:decrease2nd}, $g(x) - g(x + \alpha d) \geq  - c_2 \alpha^2 \langle d, \nabla^2 g(x)[d]\rangle$, is satisfied if
\begin{equation*}
-  \dfrac{\alpha^2}{2}\left( \langle d, \nabla^2 g(x)[d]\rangle + \dfrac{M_g \alpha }{3} \right) \geq - c_2 \alpha^2 \langle d, \nabla^2 g(x)[d]\rangle.
\end{equation*}
This is equivalent to
 \begin{align*}
  \langle d, \nabla^2 g(x)[d]\rangle + \dfrac{M_g \alpha }{3}  &\leq 2c_2 \langle d, \nabla^2 g(x)[d]\rangle\\
   \alpha &\leq \dfrac{3(2c_2-1) \langle d, \nabla^2 g(x)[d]\rangle}{M_g}\\
 \alpha &\leq \dfrac{3|2c_2-1| |\langle d, \nabla^2 g(x)[d]\rangle|}{M_g},
\end{align*}
since $c_2<1/2$. Therefore,~\eqref{eq:decrease2nd} is satisfied for all $\alpha \leq \min(\alpha_2(x), t_2(x))$. One readily checks that $t_2(x)$ and $\alpha_2(x)$ are positive. Therefore, there exists a nonempty interval $(0, \min \!\left(\alpha_2(x), t_2(x)\right)]$ where the step-size satisfies the decrease condition~\eqref{eq:decrease2nd} and defines a next iterate inside $\calC$. Therefore the backtracking in Algorithm~\ref{algo:2nd-backtracking} returns a step-size $t$ satisfying $t \geq \tau_2 \min(\alpha_2(x), t_2(x))$ in a finite number of iterations. In addition, the decrease condition~\eqref{eq:decrease2nd} provides
\begin{align*}
 g(x) -  g(x + t d) &\geq -c_2 t^2 \langle d, \nabla^2 g(x)[d]\rangle\\
  &\geq -c_2 \tau_2^2\min(\alpha_2(x), t_2(x))^2 \langle d, \nabla^2 g(x)[d]\rangle. \qedhere
\end{align*}\end{proof}

\begin{remark}
It may seem surprising that $\underline{\alpha_1}$ is a constant and $\alpha_2(x)$ depends on $x$ through the quadratic term $|\langle d, \nabla^2 g(x)[d]\rangle|$. This is a consequence of the way first- and second-order directions are defined. The step-size for a first-order step multiplies the gradient which can vary in norm whereas the step-size in second-order steps always multiplies a unit-norm direction.
\end{remark}

\subsection{Worst-case global complexity}
We are now in a position to give a formal version of our main result: the worst-case complexity of the Gradient-Eigenstep algorithm for problem~\eqref{eq:P}.
\begin{theorem}[Complexity of Algorithm~\ref{algo:gradient-eigenstep}] Consider Problem~\eqref{eq:P} under \aref{assu:ROI}, \aref{assu:boundedMandC}, \aref{assu:lipschitzh}, \aref{assu:x0} and \aref{assu:beta_large_enough}. Let $0 < \varepsilon_1 \leq R/2$ and let $\underline{g}$ be the lower bound of $g$ over the compact set $\Ccal$. Algorithm~\ref{algo:gradient-eigenstep} produces an iterate $x_{N_1} \in \Ccal$ satisfying $\norm{\nabla g(x_{N_1})} \leq \varepsilon_1$ with
\begin{equation}
N_1 \leq \dfrac{g(x_0) - \underline{g}}{ c_1 \tau_1  \min(\underline{\alpha_1}, \underline{t_1})  \varepsilon_1^2},
\end{equation}
where $\underline{t_1} = \min_{x\in \calC} t_1(x)>0$.\nl
Furthermore if $0< \varepsilon_2 < \infty$, Algorithm~\ref{algo:gradient-eigenstep} also produces an iterate $x_{N_2}$ satisfying $\norm{\nabla g(x_{N_2})} \leq \varepsilon_1$ and  $\lambdamin \! \left(\nabla^2 g(x_{N_2}) \right)\! \geq - \varepsilon_2$ with
\begin{equation}
N_2 \leq (g(x_0)- \underline{g}) \!\left[ \min \!\left( c_1 \tau_1 \min(\underline{\alpha_1}, \underline{t_1}) \varepsilon_1^2, c_2 \tau_2^2\min \!\left(   \min\!\left(\alpha_{02}, \dfrac{3|2c_2-1| \varepsilon_2}{M_g}\right), \underline{t_2}\right)^2 \! \varepsilon_2 \right)\!\right]^{-1}\!,
\label{eq:max_iter_2nd_order}
\end{equation}
where $\underline{t_2} = \min_{x\in \calC} t_2(x)>0$.
The iterate $x_{N_1}$ is an $(\varepsilon_1, 2\varepsilon_1)$-FOCP of~\eqref{eq:P} and $x_{N_2}$ is an $(\varepsilon_1, 2\varepsilon_1, \varepsilon_2 + C\varepsilon_1)$-SOCP of~\eqref{eq:P}, where $C$ is defined in Corollary~\ref{corollary:approx_stationarity_hessian-global}. 
\label{thm:complexity}
\end{theorem}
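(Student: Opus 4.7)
The plan is to turn the two per-step decrease lemmas (gradient step and eigenstep) into a telescoping bound using the fact that $g$ is bounded below on the compact set $\calC$ (\aref{assu:boundedMandC} combined with smoothness of $g$ on $\calD \supseteq \calC$), and then to invoke Corollaries~\ref{corollary:g-approx-stationary} and~\ref{corollary:approx_stationarity_hessian-global} to translate the approximate stationarity of $g$ into approximate criticality for~\eqref{eq:P}. \aref{assu:beta_large_enough} guarantees $\beta > \max\{\beta_1(x),\beta_2(x),\beta_3(x)\}$ uniformly on $\calC$, so both decrease lemmas and both corollaries apply at every iterate.

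The first preparatory step is to observe that $\underline{t_1} := \min_{x\in\calC} t_1(x)$ and $\underline{t_2} := \min_{x\in\calC} t_2(x)$ are strictly positive. Indeed, the expressions defining $t_1(x)$ and $t_2(x)$ in~\eqref{eq:t_1} and~\eqref{eq:t2} are continuous positive functions of $x$ on $\calC$: the $\norm{\nabla g(x)}^2$ denominators in $t_1(x)$ are bounded above by a constant (smoothness of $g$ on $\calC$ and compactness of $\calC$), the quantities $\sigmamin(\D h(x))$ and $\sigma_1(\D h(x))$ are bounded away from $0$ and from $\infty$ on $\calC$ (\aref{assu:ROI} and compactness), and $2\beta \sigmamin(\D h(x))^2 - \sigma_1(\D h(x)) C_\lambda(x) > 0$ uniformly because $\beta > \betabar_1$. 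Continuity on a compact set then yields positive infima. Moreover, Algorithms~\ref{algo:1st-backtracking} and~\ref{algo:2nd-backtracking} only return steps whose next iterate lies in $\calC$, so by induction from \aref{assu:x0} every iterate of Algorithm~\ref{algo:gradient-eigenstep} lies in $\calC$, and every invocation of the two decrease lemmas is legitimate.

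For the first-order bound, note that every iteration $k < N_1$ of Algorithm~\ref{algo:gradient-eigenstep} with $\varepsilon_2 = \infty$ satisfies $\norm{\nabla g(x_k)} > \varepsilon_1$. The gradient-step decrease~\eqref{eq:decrease-gradient-step}, combined with $t_1(x_k) \geq \underline{t_1}$ and $\norm{\nabla g(x_k)}^2 > \varepsilon_1^2$, yields
\begin{equation*}
g(x_k) - g(x_{k+1}) \;\geq\; c_1\,\tau_1 \min(\underline{\alpha_1}, \underline{t_1})\,\varepsilon_1^2.
\end{equation*}
Summing from $k=0$ to $N_1-1$ and using $g(x_{N_1}) \geq \underline{g}$ gives the stated bound on $N_1$. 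For the second-order bound, a similar argument applies: at each iteration $k < N_2$, either the gradient-step branch fires (decrease at least $c_1\tau_1 \min(\underline{\alpha_1}, \underline{t_1})\varepsilon_1^2$) or the eigenstep branch fires with $|\langle d, \nabla^2 g(x_k)[d]\rangle| > \varepsilon_2$. In the latter case, $\alpha_2(x_k) \geq \min(\alpha_{02}, 3|2c_2-1|\varepsilon_2/M_g)$ and~\eqref{eq:decrease-eigenstep} delivers
\begin{equation*}
g(x_k) - g(x_{k+1}) \;\geq\; c_2\,\tau_2^2 \min\!\left(\min\!\left(\alpha_{02}, \tfrac{3|2c_2-1|\varepsilon_2}{M_g}\right),\underline{t_2}\right)^{\!2}\!\varepsilon_2.
\end{equation*}
The per-iteration decrease is thus at least the $\min$ appearing in~\eqref{eq:max_iter_2nd_order}, and telescoping against $\underline{g}$ gives the bound on $N_2$. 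To conclude, $x_{N_1}$ satisfies $\norm{\nabla g(x_{N_1})} \leq \varepsilon_1$, so Corollary~\ref{corollary:g-approx-stationary} identifies it as an $(\varepsilon_1, 2\varepsilon_1)$-FOCP of~\eqref{eq:P}; similarly $x_{N_2}$ satisfies the assumptions of Corollary~\ref{corollary:approx_stationarity_hessian-global}, making it an $(\varepsilon_1, 2\varepsilon_1, \varepsilon_2 + C\varepsilon_1)$-SOCP.

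The only nontrivial step is the uniform positivity of $\underline{t_1}$ and $\underline{t_2}$; once that is settled, the argument is a routine telescoping. A secondary subtlety is ensuring that the gradient branch cannot indefinitely produce long runs that invalidate the eigenstep counting in the second-order bound — but the two branches are mutually exclusive and both decrease $g$ by at least the $\min$ of the two quantities inside~\eqref{eq:max_iter_2nd_order}, so it is enough to bound the total iteration count by the lower envelope of both decreases, which is exactly what~\eqref{eq:max_iter_2nd_order} expresses.
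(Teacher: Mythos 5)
Your proposal is correct and follows essentially the same route as the paper's proof: establish $\underline{t_1},\underline{t_2}>0$ by continuity and compactness (using $\beta>\betabar_1$ for the second term of $t_1$), telescope the per-step decreases from the two backtracking lemmas against the lower bound $\underline{g}$, and invoke the first- and second-order stationarity transfer results to conclude. The only cosmetic difference is that you cite the global Corollaries where the paper cites the pointwise Propositions, and you make explicit the (correct) induction that all iterates remain in $\calC$, which the paper leaves implicit.
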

\begin{proof}
We first show that the constants  $\underline{t_1} = \min_{x\in \calC} t_1(x)$ and $\underline{t_2} = \min_{x\in \calC} t_2(x)$ are positive. Recall from Equation~\eqref{eq:t_1} that
\begin{equation*}
 t_1(x) =\min \!\left(  \sqrt{\dfrac{R}{2C_h}}\dfrac{1}{\norm{\nabla g(x)}}    ,\dfrac{(2 \beta \sigmamin(\D h(x))^2 - \sigma_1(\D h(x)) C_\lambda(x))R}{2C_h \norm{\nabla g(x)}^2}, \dfrac{1}{2\beta \opnorm{\D h(x)}^2} \right)
\end{equation*}
One readily checks that $t_1(x)>0$ for all $x\in \calC$.
The first term, $\sqrt{R/2C_h}/\norm{\nabla g(x)}$ is positive since $\nabla g$ is continuous over $\calC$ and $\calC$ is compact. Using that $\beta>\bar{\beta_1}$ (\aref{assu:beta_large_enough}), the numerator of the second term is positive and bounded away from zero for all $x\in \calC$. 
Using compactness of $\calC$ and smoothness of $h$, the quantity $\opnorm{\D h(x)}$ is upper bounded over $\calC$ and therefore $1/2\beta\opnorm{\D h(x)}^2$ is bounded away from zero over $\calC$. We note that $t_1$ is a continuous function of $x$ which is positive for all $x$ in the compact set $\calC$. Therefore,  $\min_{x\in \calC} t_1(x)$ is attained at a point in $\calC$ and $\underline{t_1}>0$. A similar process shows that $\underline{t_2}>0$. The function $t_2(x) = \left(-\sigma_1(\D h(x)) + \sqrt{\sigma_1(\D h(x)) ^2 +2C_h R}\right)/2C_h$ is continuous over $\calC$. We also note that $t_2(x)>0$ for all $x\in \calC$ since the constants $R$ and $C_h$ are positive as a consequence of~\aref{assu:ROI} and~\aref{assu:lipschitzh} respectively. 

For every iterations $k$  where a first-order step is performed, one has $\norm{\nabla g(x_k)} > \varepsilon_1$, while for second-order steps $\langle d, \nabla^2g(x_k)[d]\rangle < - \varepsilon_2$. Therefore, Equation~\eqref{eq:decrease-gradient-step} gives the following decrease for first-order steps:
\begin{align}
 g(x_k) -  g(x_{k+1}) &\geq c_1 \tau_1 \min(\underline{\alpha_1}, t_1(x_k)) \norm{\nabla g(x_k)}^2 \nonumber\\
 &\geq c_1 \tau_1 \min(\underline{\alpha_1}, \underline{t_1}) \varepsilon_1^2,
\end{align}
where $\underline{t_1} = \min_{x\in \calC} t_1(x)>0$, as shown above. The decrease for second-order steps follows from Equation~\eqref{eq:decrease-eigenstep}, that is,
\begin{align}
g(x_k) - g(x_{k+1})  &\geq -c_2 \tau_2^2\min(\alpha_2(x_k), t_2(x_k))^2 \langle d, \nabla^2 g(x)[d]\rangle\nonumber\\
&\geq c_2 \tau_2^2\min\left( \min\left(\alpha_{02}, \dfrac{3|2c_2-1| |\langle d, \nabla^2 g(x_k)[d]\rangle|}{M_g}\right), t_2(x_k)\right)^2  \varepsilon_2\nonumber\\
&\geq c_2 \tau_2^2\min\left( \min\left(\alpha_{02}, \dfrac{3|2c_2-1| \varepsilon_2}{M_g}\right), \underline{t_2}\right)^2  \varepsilon_2,
\end{align}
where $\underline{t_2} = \min_{x\in \calC} t_2(x)>0$, as shown above.
Since $\Ccal$ is compact (\aref{assu:boundedMandC}) and $g$ is continuous on $\calC$, let $\underline{g} := \min_{x\in\calC} g(x)>-\infty$. Consider the case $\varepsilon_2<\infty$. For any $K\geq 0$, we have
\begin{align}
g(x_0) - \underline{g}& \geq \sum_{k=0}^K g(x_k) - g(x_{k+1}) \\
%&\geq  \sum_{k=0}^K \min \left( c_1 \tau_1 \min(\underline{\alpha_1}, t_1(x_k)) \varepsilon_1^2, c_2 \tau_2^2\min(\alpha_2(x_k), t_2(x_k))^2  \varepsilon_2 \right) \nonumber \\
&\geq K  \min\left( c_1 \tau_1 \min(\underline{\alpha_1}, \underline{t_1}) \varepsilon_1^2, c_2 \tau_2^2\min\left(   \min\left(\alpha_{02}, \dfrac{3|2c_2-1| \varepsilon_2}{M_g}\right), \underline{t_2}\right)^2  \varepsilon_2 \right).\label{eq:K_2nd_order}
\end{align}
Given the definition of $N_2$, Equation~\eqref{eq:K_2nd_order} tells us that $K\leq N_2$. Hence, if more than $N_2$ iterations are performed, it must be that a point where $\norm{\nabla g(x)} \leq \varepsilon_1$ and $\lambdamin(\nabla^2 g(x)) \geq -\varepsilon_2$ has been encountered. In the case $\varepsilon_2 = \infty$, no second-order step is performed, which simplifies as follows:
 \begin{align}
g(x_0) - \underline{g}& \geq \sum_{k=0}^K g(x_k) - g(x_{k+1}) \nonumber\\
&\geq K  c_1 \tau_1  \min(\underline{\alpha_1}, t_1(x_k))  \norm{\nabla g(x_k)}^2\nonumber\\
&\geq K  c_1 \tau_1  \min(\underline{\alpha_1}, \underline{t_1})  \varepsilon_1^2.
\end{align}
The fact that $x_{N_1}$ and $x_{N_2}$ are respectively $(\varepsilon_1, 2\varepsilon_1)$-FOCP and $(\varepsilon_1, 2\varepsilon_1, \varepsilon_2 + C\varepsilon_1)$-SOCP of~\eqref{eq:P} follows from Proposition~\ref{thm:g-approx-stationary} and Proposition~\ref{thm:approx_stationarity_hessian}.
\end{proof}
\section{Estimating the penalty parameter}
\label{sec:plateau-scheme}
The previous section establishes convergence results under the assumption that the penalty parameter $\beta$ is large enough to satisfy~\aref{assu:beta_large_enough}. In practice, it is rarely possible to know whether this assumption is satisfied. Therefore, this section outlines a scheme which estimates a suitable value for $\beta$. A simple strategy consists in letting Algorithm~\ref{algo:gradient-eigenstep} run for a fixed number of iterations using a given value of $\beta$, and to repeat this process with increasingly larger values of $\beta$ until convergence of Algorithm~\ref{algo:gradient-eigenstep} to an approximate critical point is achieved. We refer to such strategy as a plateau scheme. Along the way, if the algorithm encounters a point $x\in \calC$ such that $\beta \leq \max\{\beta_1(x), \beta_2(x), \beta_3(x)\}$ (Definition~\ref{def:beta123}), $\beta$ is increased and the procedure is restarted. It is tedious but not difficult to show that the complexity of such a plateau scheme is only a logarithmic factor worse than the complexity of Algorithm~\ref{algo:gradient-eigenstep} -- with a fixed value of $\beta$ that satisfies~\aref{assu:beta_large_enough}.
\begin{theorem}
Under~\aref{assu:ROI},~\aref{assu:boundedMandC},~\aref{assu:lipschitzh} and \aref{assu:x0}, a plateau scheme 
%of Algorithm~\ref{algo:plateau} 
returns an $(\varepsilon_1,2 \varepsilon_1, \varepsilon_2 + C\varepsilon_1)$-SOCP in at most $\bigO\left(\max\left\lbrace \varepsilon_1^{-2}, \varepsilon_2^{-3}\right\rbrace   \max\left\lbrace \log_\gamma \varepsilon_1^{-2}, \log_\gamma \varepsilon_2^{-3}\right\rbrace \right)$ gradient steps and eigensteps on the function $g$, where $\gamma>1$ is the growth rate of $\beta$ between two plateaus and $C$ is defined in Corollary~\ref{corollary:approx_stationarity_hessian-global}.
\end{theorem}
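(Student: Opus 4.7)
The plan is to iterate Theorem~\ref{thm:complexity} across a sequence of plateaus. Concretely, I would start from $\beta^{(0)} > 0$ and at plateau $j$ use $\beta^{(j)} = \gamma^j \beta^{(0)}$, running Algorithm~\ref{algo:gradient-eigenstep} (warm-started from the current iterate) for a prescribed budget $B^{(j)}$ of iterations. Plateau $j$ is exited either (i) when Algorithm~\ref{algo:gradient-eigenstep} returns a candidate point that can be validated as an $(\varepsilon_1, 2\varepsilon_1, \varepsilon_2 + C\varepsilon_1)$-SOCP via Propositions~\ref{thm:g-approx-stationary} and~\ref{thm:approx_stationarity_hessian}, (ii) when the budget $B^{(j)}$ is exhausted without termination, or (iii) when an encountered $x\in\calC$ fails the bound $\beta^{(j)} > \max\{\beta_1(x), \beta_2(x), \beta_3(x)\}$. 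In cases (ii)--(iii) we advance to plateau $j+1$.

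First I would bound the number of plateaus until success. Under~\aref{assu:ROI} and~\aref{assu:boundedMandC} the critical value $\betabar = \max\{\betabar_1, \betabar_2, \betabar_3\}$ from~\eqref{eq:beta-critical} is finite and independent of $\varepsilon_1, \varepsilon_2$, so after at most $J_\beta = \lceil \log_\gamma(\betabar/\beta^{(0)}) \rceil + 1$ plateaus we have $\beta^{(J_\beta)} \geq \betabar$, which activates~\aref{assu:beta_large_enough}. Once on such a plateau, Theorem~\ref{thm:complexity} guarantees termination at an approximate SOCP within the budget that theorem specifies, and both failure modes (ii) and (iii) are then precluded.

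Next I would size and sum the per-plateau budgets. Since $\betabar$ is unknown in practice, $B^{(j)}$ is chosen by a geometric-growth mechanism that mirrors~\eqref{eq:max_iter_2nd_order} evaluated with the current $\beta^{(j)}$; the quantities $L_g$, $M_g$, $\underline{\alpha_1}$, $\underline{t_1}$, $\underline{t_2}$ and the potential gap $g(x_0) - \underline{g}$ all depend continuously (and polynomially) on $\beta$, so throughout the range $\beta \in [\beta^{(0)}, \gamma\betabar]$ each $B^{(j)}$ remains $\bigO(\max\{\varepsilon_1^{-2}, \varepsilon_2^{-3}\})$ with absolute constants. Summing the resulting geometric series over the $J_\beta$ plateaus gives a total of $\bigO(\max\{\varepsilon_1^{-2}, \varepsilon_2^{-3}\})$ multiplied by the overhead introduced by the geometric growth, which is of order $\max\{\log_\gamma \varepsilon_1^{-2}, \log_\gamma \varepsilon_2^{-3}\}$; combining these yields the claimed bound.

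The main obstacle is purely bookkeeping: carefully tracing how every $\beta$-dependent constant appearing in Proposition~\ref{prop:t1}, Proposition~\ref{prop:t2}, Definition~\ref{def:Lg_Mg}, and the statement of Theorem~\ref{thm:complexity} evolves as $\beta$ is geometrically amplified across plateaus, so that the per-plateau budget growth is controlled and the final sum collapses to the stated bound. Once this dependency is made explicit, the argument is a straightforward invocation of Theorem~\ref{thm:complexity} on the first plateau for which $\beta^{(j)} \geq \betabar$, which is why the authors flag it as ``tedious but not difficult.''
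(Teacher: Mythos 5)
Your high-level plan (geometrically grow $\beta$ across restarts, then invoke Theorem~\ref{thm:complexity} on the first plateau with $\beta>\betabar$) matches the paper's, but there is a genuine gap in how you obtain the logarithmic factor, and your accounting of the number of plateaus is incomplete. You bound the number of plateaus by $J_\beta=\lceil\log_\gamma(\betabar/\beta^{(0)})\rceil+1$, which is an $\varepsilon$-independent constant: it only guarantees that $\beta$ becomes large enough. But reaching a plateau with $\beta_\ell>\betabar$ does not by itself force termination on that plateau --- the plateau length must also exceed the worst-case iteration count $K(\beta_\ell)$ of Theorem~\ref{thm:complexity}, and $K(\beta)$ involves quantities ($g(x_0)-\underline{g}$, $L_g$, $M_g$, $\underline{t_1}$, $\underline{t_2}$) that are not available to the algorithm. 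Your proposal to set $B^{(j)}$ by ``mirroring~\eqref{eq:max_iter_2nd_order} evaluated at $\beta^{(j)}$'' presumes these constants are computable; if they were, a constant number $J_\beta$ of plateaus each of cost $\bigO(\max\{\varepsilon_1^{-2},\varepsilon_2^{-3}\})$ would give a total with \emph{no} log factor, contradicting the bound you are trying to prove. Conversely, if they are not computable, your claim that termination occurs ``within the budget that theorem specifies'' once $\beta^{(j)}\geq\betabar$ is unjustified. The final step, where a sum of $J_\beta=\bigO(1)$ terms is said to acquire ``overhead of order $\max\{\log_\gamma\varepsilon_1^{-2},\log_\gamma\varepsilon_2^{-3}\}$,'' is asserted rather than derived and does not follow from your setup.

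The paper resolves this by growing the plateau length $\LP_\ell$ blindly at rate $\gamma^{4}$ from an arbitrary $\LP_0$, and proving the key quantitative lemma that $K(\beta)\leq C_3\max(\varepsilon_1^{-2},\varepsilon_2^{-3})\,\beta^{3}$ for $\beta>\betabar$ (by tracing that $L_g$, $M_g$, $\|\nabla g\|$, $1/\underline{\alpha_1}$, $1/\underline{t_1}$ are at most affine in $\beta$). Since $\beta_\ell\leq\max(\beta_0,\betabar)\gamma^{\ell}$, the required budget grows at most like $\gamma^{3\ell}$ while $\LP_\ell\geq\LP_0\gamma^{4\ell}$, so the two cross after roughly $\log_\gamma\bigl(C_3\max(\varepsilon_1^{-2},\varepsilon_2^{-3})\max(\beta_0,\betabar)^3/\LP_0\bigr)$ plateaus --- this crossing, not the estimation of $\betabar$, is the source of the $\max\{\log_\gamma\varepsilon_1^{-2},\log_\gamma\varepsilon_2^{-3}\}$ factor. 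Your proof is missing both this lemma (you only gesture at ``polynomial dependence on $\beta$'') and the race between the growth rate of the plateau length and the growth rate of $K(\beta_\ell)$, which is the mechanism that makes the whole scheme terminate and produces the stated complexity.
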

\begin{arxiv}
For a complete definition of a plateau scheme and proof of the above result, see Appendix~\ref{appendix:plateau}.
\end{arxiv}

\section{Conclusion and discussion}

In this work, we consider a penalty function (Fletcher's augmented Lagrangian) for optimization under smooth equality constraints. We establish connections between its approximate critical points and the approximate critical points of the original constrained problem~\eqref{eq:P}. We also highlight that various definitions of approximate second-order critical points for equality constraints appear in the literature. Therefore, we propose a definition of approximate criticality which has a natural geometric interpretation and extends Riemannian optimality conditions to points near the feasible set.

We present Algorithm~\ref{algo:gradient-eigenstep}, which is shown to reach approximate second-order critical points of~\eqref{eq:P} in at most $\bigO(\varepsilon^{-3})$ iterations. The only other work to date which achieved this optimal rate for an infeasible method is~\citep{cartis2019optimality}, where the definition of approximate critical point is markedly different. 
%Moreover, there is little hope to turn the approach in~\citep{cartis2019optimality} into a practical implementation. 
Finally, we describe how Algorithm~\ref{algo:gradient-eigenstep} can be modified to achieve a local quadratic convergence rate. 

 The main drawback of our approach, is the necessity to identify a set $\calC$, where the differential of the constraint is nonsingular, in order to run the algorithm. Similar smoothness assumptions are made in related works which provide a worst-case complexity analysis~\citep{cifuentes2019polynomial,xie2021complexity}. It would nonetheless be worthwhile to go beyond such assumptions.

Fletcher's augmented Lagrangian may be considered impractical in view of the linear system that must be solved at each iteration to evaluate the multipliers $\lambda(x)$. However, recent works show that it can still lead to the design of efficient algorithms and this work further reinforces the theoretical appeal of Fletcher's augmented Lagrangian. Directions of future research also emerge. Consider a smooth function $\hat \lambda \colon \calE \to \Rm$ which coincides on $\M$ with the function $\lambda(x) = \left( \D h(x)^*\right)^\dagger [\nabla f(x)] $ considered in this work. This choice of multipliers defines a corresponding function $\hat g(x) = \calL_\beta(x, \hat \lambda(x))$, a variant of the $g$. Recent works~\citep{gao2019parallelizable,xiao2020class,xiao2021solving} show that minimizing the function $\hat g$ yields efficient algorithms for a particular choice of $\hat \lambda$ on the Stiefel manifold. Is there a way to generalize this concept to other manifolds? What theoretical guarantees can we hope to keep by using $\hat \lambda(x)$ instead of $\lambda(x)$? Exploring this could yield more practical Lagrangian-based infeasible methods to solve constrained optimization problems with underlying smoothness.

\bibliographystyle{apalike}
\bibliography{fletcher.bib}

\begin{thebibliography}{}

\bibitem[Ablin and Peyr{\'e}, 2022]{ablin2022fast}
Ablin, P. and Peyr{\'e}, G. (2022).
\newblock Fast and accurate optimization on the orthogonal manifold without
  retraction.
\newblock In {\em Proc. AISTATS'22}.

\bibitem[Absil et~al., 2008]{Absil2008}
Absil, P.-A., Mahony, R., and Sepulchre, R. (2008).
\newblock {\em Optimization Algorithms on Matrix Manifolds}.
\newblock Princeton University Press.

\bibitem[Andreani et~al., 2008]{andreani2008augmented}
Andreani, R., Birgin, E.~G., Mart{\'\i}nez, J.~M., and Schuverdt, M.~L. (2008).
\newblock On augmented lagrangian methods with general lower-level constraints.
\newblock {\em SIAM Journal on Optimization}, 18(4):1286--1309.

\bibitem[Andreani et~al., 2007]{andreani2007second}
Andreani, R., Mart{\'\i}nez, J., and Schuverdt, M. (2007).
\newblock On second-order optimality conditions for nonlinear programming.
\newblock {\em Optimization}, 56(5-6):529--542.

\bibitem[Bai et~al., 2019]{bai2019proximal}
Bai, Y., Duchi, J., and Mei, S. (2019).
\newblock Proximal algorithms for constrained composite optimization, with
  applications to solving low-rank sdps.
\newblock {\em arXiv preprint arXiv:1903.00184}.

\bibitem[Bai and Mei, 2018]{bai2018analysis}
Bai, Y. and Mei, S. (2018).
\newblock Analysis of sequential quadratic programming through the lens of
  riemannian optimization.
\newblock {\em arXiv preprint arXiv:1805.08756}.

\bibitem[Bento et~al., 2017]{bento2017iteration}
Bento, G., Ferreira, O., and Melo, J. (2017).
\newblock Iteration-complexity of gradient, subgradient and proximal point
  methods on {R}iemannian manifolds.
\newblock {\em Journal of Optimization Theory and Applications},
  173(2):548--562.

\bibitem[Bertsekas, 1982]{bertsekas1982constrained}
Bertsekas, D.~P. (1982).
\newblock {\em Constrained Optimization and Lagrange Multiplier Methods}.
\newblock Elsevier.

\bibitem[Birgin et~al., 2017]{birgin2017worst}
Birgin, E.~G., Gardenghi, J., Mart{\'\i}nez, J.~M., Santos, S.~A., and Toint,
  P.~L. (2017).
\newblock Worst-case evaluation complexity for unconstrained nonlinear
  optimization using high-order regularized models.
\newblock {\em Mathematical Programming}, 163(1-2):359--368.

\bibitem[Birgin and Mart{\'\i}nez, 2014]{birgin2014practical}
Birgin, E.~G. and Mart{\'\i}nez, J.~M. (2014).
\newblock {\em Practical augmented Lagrangian methods for constrained
  optimization}.
\newblock SIAM.

\bibitem[Birgin and Martínez, 2019]{birgin2019complexity}
Birgin, E.~G. and Martínez, J.~M. (2019).
\newblock Complexity and performance of an augmented lagrangian algorithm.

\bibitem[Boumal, 2023]{boumal2023intromanifolds}
Boumal, N. (2023).
\newblock {\em An introduction to optimization on smooth manifolds}.
\newblock Cambridge University Press.

\bibitem[Boumal et~al., 2019]{boumal2019global}
Boumal, N., Absil, P.-A., and Cartis, C. (2019).
\newblock Global rates of convergence for nonconvex optimization on manifolds.
\newblock {\em IMA Journal of Numerical Analysis}, 39(1):1--33.

\bibitem[Burer and Monteiro, 2003]{burer2003nonlinear}
Burer, S. and Monteiro, R.~D. (2003).
\newblock A nonlinear programming algorithm for solving semidefinite programs
  via low-rank factorization.
\newblock {\em Mathematical Programming}, 95(2):329--357.

\bibitem[Cartis et~al., 2012]{Cartis2012}
Cartis, C., Gould, N., and Toint, P. (2012).
\newblock Complexity bounds for second-order optimality in unconstrained
  optimization.
\newblock {\em Journal of Complexity}, 28(1):93--108.

\bibitem[Cartis et~al., 2010]{cartis2010complexity}
Cartis, C., Gould, N.~I., and Toint, P.~L. (2010).
\newblock On the complexity of steepest descent, newton's and regularized
  newton's methods for nonconvex unconstrained optimization problems.
\newblock {\em Siam journal on optimization}, 20(6):2833--2852.

\bibitem[Cartis et~al., 2019]{cartis2019optimality}
Cartis, C., Gould, N.~I., and Toint, P.~L. (2019).
\newblock Optimality of orders one to three and beyond: characterization and
  evaluation complexity in constrained nonconvex optimization.
\newblock {\em Journal of Complexity}, 53:68--94.

\bibitem[Cifuentes and Moitra, 2019]{cifuentes2019polynomial}
Cifuentes, D. and Moitra, A. (2019).
\newblock Polynomial time guarantees for the burer-monteiro method.

\bibitem[Conn et~al., 1991]{conn1991globally}
Conn, A.~R., Gould, N.~I., and Toint, P. (1991).
\newblock A globally convergent augmented lagrangian algorithm for optimization
  with general constraints and simple bounds.
\newblock {\em SIAM Journal on Numerical Analysis}, 28(2):545--572.

\bibitem[Di~Pillo, 1994]{pillo1994exact}
Di~Pillo, G. (1994).
\newblock Exact penalty methods.
\newblock In {\em Algorithms for Continuous Optimization}, pages 209--253.
  Springer.

\bibitem[Di~Pillo and Grippo, 1986]{di1986exact}
Di~Pillo, G. and Grippo, L. (1986).
\newblock An exact penalty function method with global convergence properties
  for nonlinear programming problems.
\newblock {\em Mathematical Programming}, 36(1):1--18.

\bibitem[Di~Pillo and Grippo, 1989]{di1989exact}
Di~Pillo, G. and Grippo, L. (1989).
\newblock Exact penalty functions in constrained optimization.
\newblock {\em SIAM Journal on control and optimization}, 27(6):1333--1360.

\bibitem[Estrin et~al., 2020a]{estrin2020implementing}
Estrin, R., Friedlander, M.~P., Orban, D., and Saunders, M.~A. (2020a).
\newblock Implementing a smooth exact penalty function for equality-constrained
  nonlinear optimization.
\newblock {\em SIAM Journal on Scientific Computing}, 42(3):A1809--A1835.

\bibitem[Estrin et~al., 2020b]{estrin2020implementingb}
Estrin, R., Friedlander, M.~P., Orban, D., and Saunders, M.~A. (2020b).
\newblock Implementing a smooth exact penalty function for general constrained
  nonlinear optimization.
\newblock {\em SIAM Journal on Scientific Computing}, 42(3):A1836--A1859.

\bibitem[Fletcher, 1970]{fletcher1970class}
Fletcher, R. (1970).
\newblock A class of methods for nonlinear programming with termination and
  convergence properties.
\newblock {\em Integer and nonlinear programming}, pages 157--173.

\bibitem[Gao et~al., 2019]{gao2019parallelizable}
Gao, B., Liu, X., and Yuan, Y.-x. (2019).
\newblock Parallelizable algorithms for optimization problems with
  orthogonality constraints.
\newblock {\em SIAM Journal on Scientific Computing}, 41(3):A1949--A1983.

\bibitem[Ge et~al., 2015]{ge2015escaping}
Ge, R., Huang, F., Jin, C., and Yuan, Y. (2015).
\newblock Escaping from saddle points—online stochastic gradient for tensor
  decomposition.
\newblock In {\em Conference on learning theory}, pages 797--842. PMLR.

\bibitem[Grapiglia and xiang Yuan, 2019]{grapiglia2019complexity}
Grapiglia, G.~N. and xiang Yuan, Y. (2019).
\newblock On the complexity of an augmented lagrangian method for nonconvex
  optimization.

\bibitem[Grapiglia and Yuan, 2021]{grapiglia2021complexity}
Grapiglia, G.~N. and Yuan, Y.-x. (2021).
\newblock On the complexity of an augmented lagrangian method for nonconvex
  optimization.
\newblock {\em IMA Journal of Numerical Analysis}, 41(2):1508--1530.

\bibitem[Grubi{\v{s}}i{\'c} and Pietersz, 2007]{grubivsic2007efficient}
Grubi{\v{s}}i{\'c}, I. and Pietersz, R. (2007).
\newblock Efficient rank reduction of correlation matrices.
\newblock {\em Linear algebra and its applications}, 422(2-3):629--653.

\bibitem[He et~al., 2023]{he2023newton}
He, C., Lu, Z., and Pong, T.~K. (2023).
\newblock A newton-cg based augmented lagrangian method for finding a
  second-order stationary point of nonconvex equality constrained optimization
  with complexity guarantees.
\newblock {\em arXiv preprint arXiv:2301.03139}.

\bibitem[Horn and Johnson, 1991]{horntopics}
Horn, R. and Johnson, C. (1991).
\newblock Topics in matrix analysis.
\newblock {\em Cambridge University Press}.

\bibitem[Jia et~al., 2021]{jia2021augmented}
Jia, X., Kanzow, C., Mehlitz, P., and Wachsmuth, G. (2021).
\newblock An augmented lagrangian method for optimization problems with
  structured geometric constraints.

\bibitem[Lee, 2018]{lee2018introduction}
Lee, J.~M. (2018).
\newblock {\em Introduction to Riemannian manifolds}.
\newblock Springer.

\bibitem[Ling, 2023]{ling2023solving}
Ling, S. (2023).
\newblock Solving orthogonal group synchronization via convex and low-rank
  optimization: Tightness and landscape analysis.
\newblock {\em Mathematical Programming}, 200(1):589--628.

\bibitem[Liu and Boumal, 2020]{liu2020riemannianconstraints}
Liu, C. and Boumal, N. (2020).
\newblock Simple algorithms for optimization on {R}iemannian manifolds with
  constraints.
\newblock {\em Applied Mathematics and Optimization}, 82(3):949--981.

\bibitem[\L{}ojasiewicz, 1982]{lojasiewicz1982trajectoires}
\L{}ojasiewicz, S. (1982).
\newblock Sur les trajectoires du gradient d’une fonction analytique.
\newblock {\em Seminari di geometria}, pages 115--117.

\bibitem[Nesterov, 2004]{Nesterov2004}
Nesterov, Y. (2004).
\newblock {\em Introductory Lectures on Convex Optimization}.
\newblock Springer {US}.

\bibitem[Nesterov and Polyak, 2006]{Nesterov2006}
Nesterov, Y. and Polyak, B. (2006).
\newblock Cubic regularization of newton method and its global performance.
\newblock {\em Mathematical Programming}, 108(1):177--205.

\bibitem[Polyak, 1963]{polyak1963gradient}
Polyak, B.~T. (1963).
\newblock Gradient methods for minimizing functionals.
\newblock {\em Zhurnal Vychislitel'noi Matematiki i Matematicheskoi Fiziki},
  3(4):643--653.

\bibitem[Polyak, 2009]{polyak2009local}
Polyak, R.~A. (2009).
\newblock On the local quadratic convergence of the primal--dual augmented
  lagrangian method.
\newblock {\em Optimization Methods \& Software}, 24(3):369--379.

\bibitem[Rapcs{\'{a}}k, 1997]{rapcsak1997smooth}
Rapcs{\'{a}}k, T. (1997).
\newblock {\em Smooth Nonlinear Optimization in R n}.
\newblock Springer {US}.

\bibitem[Rosen et~al., 2021]{Rosen2021Advances}
Rosen, D.~M., Doherty, K.~J., Espinoza, A.~T., and Leonard, J.~J. (2021).
\newblock Advances in inference and representation for simultaneous
  localization and mapping.
\newblock {\em Annual Review of Control, Robotics, and Autonomous Systems},
  4:215--242.

\bibitem[Royer et~al., 2020]{royer2020newton}
Royer, C.~W., O’Neill, M., and Wright, S.~J. (2020).
\newblock A newton-cg algorithm with complexity guarantees for smooth
  unconstrained optimization.
\newblock {\em Mathematical Programming}, 180(1):451--488.

\bibitem[Sahin et~al., 2019]{sahin2019inexact}
Sahin, M.~F., Eftekhari, A., Alacaoglu, A., Latorre, F., and Cevher, V. (2019).
\newblock An inexact augmented lagrangian framework for nonconvex optimization
  with nonlinear constraints.
\newblock {\em arXiv preprint arXiv:1906.11357}.

\bibitem[Schechtman et~al., 2023]{schechtman2023orthogonal}
Schechtman, S., Tiapkin, D., Muehlebach, M., and Moulines, E. (2023).
\newblock Orthogonal directions constrained gradient method: from non-linear
  equality constraints to stiefel manifold.
\newblock {\em arXiv preprint arXiv:2303.09261}.

\bibitem[Wright and Recht, 2022]{wright2022optimization}
Wright, S.~J. and Recht, B. (2022).
\newblock {\em Optimization for data analysis}.
\newblock Cambridge University Press.

\bibitem[Xiao and Liu, 2021]{xiao2021solving}
Xiao, N. and Liu, X. (2021).
\newblock Solving optimization problems over the stiefel manifold by smooth
  exact penalty function.

\bibitem[Xiao et~al., 2020]{xiao2020class}
Xiao, N., Liu, X., and Yuan, Y.-x. (2020).
\newblock A class of smooth exact penalty function methods for optimization
  problems with orthogonality constraints.
\newblock {\em Optimization Methods and Software}, pages 1--37.

\bibitem[Xie and Wright, 2021]{xie2021complexity}
Xie, Y. and Wright, S.~J. (2021).
\newblock Complexity of proximal augmented lagrangian for nonconvex
  optimization with nonlinear equality constraints.
\newblock {\em Journal of Scientific Computing}, 86(3):1--30.

\bibitem[Zhang and Sra, 2016]{zhang2016complexity}
Zhang, H. and Sra, S. (2016).
\newblock First-order methods for geodesically convex optimization.
\newblock In {\em Conference on Learning Theory}, pages 1617--1638.

\end{thebibliography}
 \appendix

 \section{Related work}
\label{sec:literature}

\subsection*{Complexity in constrained optimization}

The study of complexity in optimization has been very active in recent years, both for constrained and unconstrained problems. The field focuses on giving guarantees on the worst-case number of iterations an algorithm requires to achieve a predetermined termination criterion. The first results dealt with the unconstrained case, where $\M= \calE$. Among others,~\citet{Nesterov2004} shows that for Lipschitz differentiable $f$, gradient descent with an appropriate step size requires at most $\bigO(\varepsilon^{-2})$ iterations to find a point which satisfies $\norm{\nabla f(x)}\leq \varepsilon$. This is sharp~\citep{cartis2010complexity}, meaning that there exist functions for which gradient descent requires that many iterations.\footnote{Note that if Hessian information is available and the Hessian of $f$ is Lipschitz continuous, cubic regularization has a $\bigO(\varepsilon^{-3/2})$ complexity for approximate first-order critical points~\citep{Nesterov2006}. Using derivatives of higher order can further improve the rate of regularization methods~\citep{birgin2017worst}.} \citet{Cartis2012} further show that a point which satisfies both $\norm{\nabla f(x)}\leq \varepsilon$ and $\lambdamin(\nabla^2 f(x))\geq - \varepsilon$ can be found in $\bigO(\varepsilon^{-3})$ iterations using a cubic regularization method. This bound is also sharp.

The study of the constrained case adds some difficulties. This paper focuses on problems with equality constraints and leaves aside inequality constraints. For complexity bounds of constrained problems, the sharpness of the unconstrained bounds carries over to the constrained case. That is, the best worst-case bounds achievable for the constrained case are also $\bigO(\varepsilon^{-2})$ and $\bigO(\varepsilon^{-3})$ for first- and second-order points using first- and second-order methods respectively. 

Under~\aref{assu:ROI}, problem~\eqref{eq:P} is defined over a smooth manifold. For some known manifolds, such as those described in~\citep{Absil2008}, Riemannian optimization offers an elegant and efficient way to solve constrained optimization problems. \citet{zhang2016complexity,bento2017iteration,boumal2019global} showed that some Riemannian optimization algorithms have the same worst-case bounds as their unconstrained counterparts. That is, under a Lipschitz smoothness assumption, Riemannian gradient descent with an appropriate step size finds a point which satisfies $\norm{\grad_\M f(x)}\leq \varepsilon$ in $\bigO(\varepsilon^{-2})$ iterations. Similarly, a Riemannian trust-region algorithm finds a point which satisfies $\norm{\grad_\M f(x)}\leq \varepsilon$ and $\lambdamin \!\left( \Hess_\M f(x)\right)\! \geq -\varepsilon \Id$ in $\bigO(\varepsilon^{-3})$ iterations. 

Riemannian optimization methods are applicable to manifolds $\M$	 provided that one is able to compute retractions and generate a feasible sequence of iterates. This is sometimes impossible or too expensive computationally. This prompts the use of infeasible methods to solve~\eqref{eq:P}, which are the focus of this paper. 

Several different notions of approximate criticality for~\eqref{eq:P} are in use in the literature. We review and compare them now, together with existing algorithmic guarantees to find such points. Among those that cover approximate second-order critical points, the rates are either not optimal (worse than $\bigO(\varepsilon^{-3})$), or they rely on an unusual notion of criticality. 

For a non-empty, closed convex set $\mathcal{F}$, \citet{cartis2019optimality} consider the problem $\min_{x\in \mathcal{F}} f(x)$ such that $h(x) =0$, which is a problem class more general than~\eqref{eq:P}. They propose a two-phase algorithm which finds approximate first-, second- and even third-order critical points. The first phase of their algorithm attempts to find an approximately feasible point. The second phase minimizes the cost function while tracking infeasibility and staying close to the feasible set. This approach is not particularly efficient in practice, but yields optimal complexity rates for finding approximately critical points, which for first- and second-order are respectively $\bigO(\varepsilon^{-2})$ and $\bigO(\varepsilon^{-3})$ iterations. Their notion of criticality is unusual but has the advantage of generalizing to optimality beyond second order. Considering the merit function
\begin{equation}
 \mu(x, t) := \dfrac{1}{2} \norm{r(x, t)}^2 := \dfrac{1}{2}\norm{\begin{pmatrix}
h(x)\\
f(x) - t
\end{pmatrix}}^2,
\end{equation}
 an approximate second-order critical point $x\in \Ecal$  is defined as satisfying
\begin{align}
\phi_{\mu, j}^{\Delta}(x,t)\leq \varepsilon \Delta^j \norm{r(x,t)} \txt{ for } j = 1, 2
\label{eq:cartis-critical}
\end{align}
where 
\begin{equation}
\phi^\Delta_{\mu, j}:= \mu(x,t) - \min_{\substack{d\in \Ecal \\
\norm{d}\leq \Delta }} T_{\mu, j}(x,d),
\end{equation}
is the largest feasible decrease of the $j$th order Taylor model $T_{\mu, j}(x,d)$ achievable at distance at most $\Delta$ from $x$. 

\citet{cifuentes2019polynomial} tackle the problem of solving semi-definite programs using the Burer-Monteiro factorization.  They adapt the two-phase algorithm from~\citep{cartis2019optimality} so that the target points satisfy the following notion of criticality. For $\gamma>0$ and $\bm{\varepsilon} = (\varepsilon_0, \varepsilon_1, \varepsilon_2)$, they define a point $x\in \calE$ as $(\bm{\varepsilon}, \gamma)$-approximately feasible approximately 2-critical (AFAC) if there exists $\lambda \in \Rm$ such that: 
\begin{multline}
\norm{h(x)} \leq \varepsilon_0\text{, }  \norm{\nabla_{x} \calL(x,\lambda)} \leq\varepsilon_1,  \text{ and }  u\transpose \nabla^2_{xx}L(x, \lambda) u  \geq - \varepsilon_2  \text{ for all } u \text{ of unit norm} \\ \text{ such that } \norm{\D h(x)[u]} \leq \gamma.
 \label{eq:Cifuentes-critical}
\tag{AFAC}
\end{multline}
The set of directions $u\in \calE$ that satisfy $\norm{\D h(x)[u]}\leq \gamma$ for some $\gamma >0$ includes the tangent space defined by $\D h(x)[u]=0$. Therefore, the condition~\eqref{eq:Cifuentes-critical} implies the conditions~\eqref{eq:lagrange-eps-focp} and~\eqref{eq:lagrange-eps-socp} defined below, but the converse is not true. Under Assumptions~\aref{assu:ROI} and~\aref{assu:x0}, along with uniform boundedness and Lipschitz continuity of $f$, $h$ and their derivatives on $\calC$,~\citet{cifuentes2019polynomial} show that an~\eqref{eq:Cifuentes-critical} point can be found in $\bigO\left( \max\left\lbrace \varepsilon_0^{-2}\varepsilon_1^{-2}, \varepsilon_0^{-3}\varepsilon_2^{-3}\right\rbrace\right)$ iterations. The smoothness and initialization assumptions made are mostly equivalent to the ones made in this work. 
%Their complexity estimate is not optimal, but it is the first complexity estimate which guarantees an approximate solution to a semi-definite program with high probability using the Burer-Monteiro formulation. 
They point out that in adapting the two-phase algorithm from~\citep{cartis2019optimality} to guarantee~\eqref{eq:Cifuentes-critical} points, a factor $\varepsilon_0^{-1}$ is lost in the complexity. 

\subsection*{Complexity of augmented Lagrangian methods}

A recent point of interest in the literature has been the study of complexity for algorithms that belong to the family of augmented Lagrangian methods (ALM). These methods have always been popular, with good practical results, but worst-case complexity results are lacking. Augmented Lagrangian methods minimize $\calL_\beta(x,\lambda)$ by updating the variables $x\in \calE$ and $\lambda\in \Rm$ alternatively. At iteration $k$, the subproblem to update $x$ usually involves finding approximate minimizers of $\calL_\beta(\cdot, \lambda_k)$, while the multipliers are updated using the first-order step $\lambda_{k+1} = \lambda_k  -\beta h(x_k)$. The penalty parameter $\beta$ is also typically increased throughout the iterations~\citep{birgin2014practical}.

\citet{xie2021complexity} analyse a proximal ALM, and suggest to solve the subproblems using a Newton-conjugate gradient algorithm from~\citep{royer2020newton}. For this second-order algorithm, they show a total iteration complexity to reach approximate first- and second-order critical points of $\bigO(\varepsilon^{-11/2})  $ and $\bigO(\varepsilon^{-7})$. When $h$ is linear, their guarantees match the best known result of $\bigO(\varepsilon^{-3})$ total iterations for second-order points. These results require the initial iterate to satisfy $\norm{h(x_0)}^2 \leq \min(C_0/\rho,1)$ for some constant $C_0>0$ and $\rho$ that increases as $\varepsilon$ and $\sigmabar$ decrease. This condition is difficult to verify in practice, when there is no simple way to generate a feasible point. Admittedly, it can also be difficult to satisfy our condition~\aref{assu:x0} in general. The advantage of~\aref{assu:x0} is that generating an initial iterate in $\calC$ does not depend on $\varepsilon$ but only on the function $h$. \citet{xie2021complexity} further require that for some $\rho_0\leq 0$, the function $f(x) + \dfrac{\rho_0}{2}\norm{h(x)}^2$ has compact level sets, and also that $f$ be upper-bounded on the set $\{x\in \calE: \norm{h(x)}\leq 1\}$. 

\citet{he2023newton} improved upon the rates of~\citet{xie2021complexity} using a similar regularized augmented Lagrangian method. They show a total iteration complexity of $\bigO(\varepsilon^{-7/2})$ to reach $(\varepsilon, \sqrt{\varepsilon})$-critical points. They also report a rate of $\bigO(\varepsilon^{-11/2})$ which does not require any constraint qualification to hold. Assuming a constraint qualification, they report a total iteration complexity of $\bigO\left(\varepsilon_1^{-2} \max\left\lbrace\varepsilon_1^{-2}\varepsilon_2, \varepsilon_2^{-3}\right\rbrace\right)$, which gives $\bigO(\varepsilon^{-5})$ for $\varepsilon_1 = \varepsilon_2 = \varepsilon$ and $\bigO(\varepsilon^{-7/2})$ for $(\varepsilon_1,\varepsilon_2) =(\varepsilon, \sqrt{\varepsilon})$. Without any constraint qualification, they report a total iteration complexity of $\bigO\left(\varepsilon_1^{-4} \max\left\lbrace\varepsilon_1^{-2}\varepsilon_2, \varepsilon_2^{-3}\right\rbrace\right)$, which gives $\bigO(\varepsilon^{-7})$ for $\varepsilon_1 = \varepsilon_2 = \varepsilon$ and $\bigO(\varepsilon^{-11/2})$ for $(\varepsilon_1,\varepsilon_2) =(\varepsilon, \sqrt{\varepsilon})$.

Both~\citet{xie2021complexity,he2023newton} consider optimality conditions based on the Lagrangian function, which generalize well-known optimality conditions under constraints to non-feasible points. The point $x\in\calE$ is approximately second-order critical if there exists $\lambda \in \Rm$ such that
\begin{align}
\norm{h(x)} &\leq \varepsilon_0, & \norm{\nabla_x \calL(x, \lambda)}&\leq \varepsilon_1 
\label{eq:lagrange-eps-focp}
\end{align}
and
\begin{equation}
\inner{\nabla^2_{xx} \mathcal{L}(x,\lambda)[v]}{v} \geq -\varepsilon_2 \norm{v}^2 \textrm{  for all  } v\in \calE \textrm{ such that  }\D h(x)[v]=0.
\label{eq:lagrange-eps-socp}
\end{equation}
Note that the conditions~\eqref{eq:lagrange-eps-focp} and~\eqref{eq:lagrange-eps-socp} are not equivalent to~\eqref{eq:SOCP}. If $x\in \calE$ is an~\eqref{eq:SOCP}, then it satisfies~\eqref{eq:lagrange-eps-focp} and~\eqref{eq:lagrange-eps-socp} with multipliers $ \lambda(x)\in \Rm$. However, Equations~\eqref{eq:lagrange-eps-focp} and~\eqref{eq:lagrange-eps-socp} do not imply~\eqref{eq:SOCP}. As a counter-example, in $\calE= \Rn$ take $\varepsilon_1 = \varepsilon_2 = 1/2$ with the functions $h(x) = \norm{x}^2-1$ and $f(x) = \inner{x}{w}$ for some $w\in \calE$, $\norm{w} = 1$. The point $w\in \calE$ satisfies~\eqref{eq:lagrange-eps-focp} and~\eqref{eq:lagrange-eps-socp} with multiplier $\lambda = 1/4$, even though it is the \emph{maximizer} of the function $f$ on the sphere, which is undesirable. However, $\lambdamin (\Hess_{\M} f(w)) = -1$ hence $w$ is not an~\eqref{eq:SOCP}, which one should expect as it does not approximately minimize $f$ on the sphere. The values of $\varepsilon_1,\varepsilon_2$ in the example are not typical of an optimization algorithm, but it is possible to scale $f$ and $h$ by a constant to retain the conclusion while making $\varepsilon_1,\varepsilon_2$ small. Thus, the two notions only meet if $\varepsilon$ is smaller than some unknown threshold.
% \red{We find that $\norm{\nabla_x \calL(w,\lambda)} = \norm{w - 2\lambda w} = |1-2\lambda|$ and $\nabla^2_{xx} \calL(w,\lambda) = -2\lambda \Id$. Hence, for any $\varepsilon_1, \varepsilon_2 >0$, so long as $\lambda \in \R$ satisfies 
%\begin{align*}
%|1-2\lambda| \leq \varepsilon_1\\
%-2\lambda \geq - \varepsilon_2,
%\end{align*}
%the pair $(w,\lambda)$ satisfies~\eqref{eq:lagrange-eps-focp} and~\eqref{eq:lagrange-eps-socp} even though $w$ is a maximizer and should not be an approximate minimizer. Basically one could take $\varepsilon_1, \varepsilon_2$ as small as possible, there is always a $\lambda >0$ small enough such that the condition is unfortunately satisfied.}

\citet{grapiglia2021complexity} provide a worst-case complexity analysis for an augmented Lagrangian method that can  be applied to both equality and inequality constraints. For $h(x_0)=0$, the number of outer iterations to reach a first-order critical point is $\bigO\Big(\varepsilon^{-2/(\alpha-1)}\Big)$ where $\alpha>1$ defines the rate of increase of the penalty parameter $\beta$, i.e., $\beta_{k+1} = \max\{(k+1)^\alpha, \beta_k\}$. In this way, increasing $\alpha$ worsens the conditioning of the subproblems and increases the inner iteration count, which is not included in the bound above. The definition of first-order critical point simplifies to~\eqref{eq:lagrange-eps-focp} for problems with equality constraints only. Under the additional assumption that the penalty parameters $\beta_k$ stay bounded as $k\to \infty$,  the outer complexity is improved to $\bigO(\log(\varepsilon^{-1}))$. It is debatable whether the assumption that the penalty parameters remain bounded is reasonable in practice, as increasing the penalty parameters is often critical to ensure convergence. 

\citet{birgin2019complexity} analyse the outer iteration complexity of the popular optimization software \textsc{Algencan} introduced in~\citep{andreani2008augmented}. This software was designed with practical efficiency in mind. It handles both equalities and inequalities. It is also safeguarded, meaning that upper and lower bounds are imposed on the multipliers. That work also shows an outer iteration complexity of  $\bigO(\log(\varepsilon^{-1}))$ in the case of bounded penalty parameters.

In~\citep{conn1991globally}, the authors present a classical augmented Lagragian algorithm, where the usual first-order update for the multipliers $\lambda_{k+1} = \lambda_k - \beta h(x)$ can be replaced by the least-squares update~\eqref{eq:lambda}, namely, $\lambda_{k+1} = \lambda(x_{k+1})$. They show that if a limit point $x^*$ of the algorithm is feasible, then it is first-order critical for~\eqref{eq:P} with multipliers $\lambda(x^*)$ (Equation~\eqref{eq:riemannian-feasible-focp}).

\citet{sahin2019inexact} study the problem $\min_{x\in\R^n} f_1(x) + f_2(x)$ such that $F(x)=0$ where $f_1$ is nonconvex and smooth, $f_2$ is proximal-friendly and convex and $F\colon \R^n \to \R^m$ is a nonlinear operator. The potential nonsmoothness of $g$ extends the range of applications compared to~\eqref{eq:P}. For an augmented Lagrangian algorithm, they report bounds of $\bigO(\varepsilon^{-3})$ and $\bigO(\varepsilon^{-5})$ outer iterations to find approximate first- and second-order critical points of the augmented Lagrangian, where the subproblems are solved with a first- or second-order solver respectively.

\subsection*{Infeasible optimization methods for Riemannian manifolds}

Optimization under orthogonality constraints appears in a number of applications and is an active area of research. Riemannian optimization methods can be used on the Stiefel manifold $\St(n,p)= \left\lbrace X\in \R^{n\times p}: X\transpose X = \I_n \right\rbrace$ and the orthogonal group $\mathrm{O}(p) = \left \lbrace X\in \R^{p\times p}: X\transpose X = \I_p\right \rbrace$. These algorithms requires to perform an orthogonalization procedure, known as a retraction, at every step throughout the optimization process. When $p$ is small compared to $n$, fast retractions are available for the Stiefel manifold. However, when $p$ is large, computing these retractions is often the computational bottleneck~\citep{gao2019parallelizable}. This has prompted the search for retraction-free algorithms to deal with orthogonality constraints. Our use of Fletcher's augmented Lagrangian is partially inspired by~\citet{gao2019parallelizable}. Those authors propose an algorithm specific to the Stiefel manifold. The algorithm is a primal-dual scheme, which updates alternatively the variable $x$ and the multipliers $\lambda$. The primal update is obtained from approximately minimizing $\calL_\beta (x,\lambda)$ over $x$, while $\lambda$ is updated using a simplified version of formula~\eqref{eq:lambda}, which we call $\hat \lambda(\cdot)$. Consider $\hat g(x)=\calL_\beta(x,\hat\lambda(x))$ the penalty where the least-squares multipliers $\lambda(\cdot)$ are replaced by the approximation $\hat \lambda(\cdot)$.  In~\citep{xiao2020class}, the authors further study the penalty function $\hat g(x)$ for the Stiefel manifold. Recognizing that this function is unbounded below on $\R^{n\times p}$, they add an artificial box constraint around Stiefel to prevent divergence and develop a second-order method to minimize $\hat g(x)$ with asymptotic convergence results.
\citet{ablin2022fast} also present a retraction-free algorithm on the orthogonal group. Their landing algorithm is an infeasible method which converges to an orthogonal matrix through the minimization of a purposefully constructed potential energy function. They report a speed up over classical retraction-based methods on some large-scale problems.
\citet{schechtman2023orthogonal} have proposed a recent follow-up work on a first-order method which extends~\citep{ablin2022fast} to the general manifold $\calM$.

In the recent works~\citep{liu2020riemannianconstraints,jia2021augmented}, the authors consider a framework where, conceptually, part of the constraints are easy to project onto, while the other constraints are more difficult to handle, as can be the case for the constraints of~\eqref{eq:P}. Mixed approaches are proposed where the easy constraints are dealt with in a Riemannian-like fashion, while the other constraints are penalized with an augmented Lagrangian function. 

Table~\ref{table:review} on page~\pageref{table:review}	presents a list of works that have significant theoretical results for a problem definition similar to~\eqref{eq:P}. The table shows whether they consider second-order critical points, how the target points are defined and the results in terms of global complexity and local rate of convergence.

\subsection*{Fletcher's augmented Lagrangian}

Around the same year that augmented Lagrangian methods came about, the penalty function we use in this paper---Fletcher's augmented Lagrangian—was introduced in~\citep{fletcher1970class}. The penalty function is introduced in its general form as
\begin{align*}
f(x) - \inner{h(x)}{\lambda(x)} + \dfrac{1}{2} \inner{h(x)}{Qh(x)},
\end{align*}
where $Q$ is a positive definite matrix. We study the natural choice $Q = \beta \Id$. In the original work, some fundamental properties of the function were established, most notably, connecting critical points of $f$ on $\calM$ and critical points of $g$.  \citet[section 4.3.2]{bertsekas1982constrained} also covers properties of $g$, as an exact penalty functions that depend only on $x$ and does not use a variable for the Lagrange multipliers. These properties are covered in Section~\ref{sec:fletcher-alm}, where we extend them to situations with approximate critical points of first- and second-order.

\citet{di1986exact,pillo1994exact} present algorithms with local convergence analyses that rely on Fletcher's augmented Lagrangian.
\citet{di1989exact} define the Lagrange mutlipliers as
\begin{align}\label{eq:regularized_lambda}
\lambda_\gamma(x) = \argmin_{\lambda \in \R^m} \norm{\D h(x)^* [\lambda] - \nabla f(x)}^2_2 + \dfrac{\gamma}{2}\norm{\lambda}^2_2 ,
\end{align}
for some $\gamma >0$. This regularization ensures that the multipliers $\lambda_\gamma(x)$ are well defined even when $\D h(x)$ is singular. The regularized least-squares problem may also be useful when $\D h(x)$ has full rank but is close to deficiency. In those cases the least-squares problem becomes ill-conditioned, which may leads to inaccurate solutions.

\citet{fletcher1970class} proposed different choices for the matrix $Q$, including the penalty 
\begin{align}
\phi_\beta(x) = f(x) - \inner{h(x)}{\lambda(x)} +\beta \inner{h(x)}{\left( \D h(x)^* \D h(x)\right)^{-1} h(x)},
\end{align}
which is studied in ~\citep{estrin2020implementing}. They present a way to compute $g(x)$, $\nabla g(x)$ and approximations of Hessian-vector products $\hess g(x) v$ which only rely on solving least-square linear systems. \citet[Chap. 15]{rapcsak1997smooth} studies the modified Lagrangian function $\calL(x,\lambda(x))$ and considers the properties of its critical points on a subset of $\calM$ that is geodesically convex.

\section{Proof of Proposition~\ref{prop:feasibleC}} 
 \label{sec:proof_feasibleC}
\begin{proof}
Define $\varphi(x) = \dfrac{1}{2}\norm{h(x)}^2$ and take any $x_0 \in \calC  = \lbrace x\in \calE: \varphi(x) \leq R^2/2\rbrace$. Consider the following differential system:
\begin{equation}
\left\lbrace
\begin{aligned}
\dfrac{\mathrm{d}}{\mathrm{d} t}x(t) & = - \nabla \varphi(x(t)) \\
x(0) & = x_0.
\end{aligned}
\right.
\label{eq:flow}
\end{equation}
The fundamental theorem of flows~\citep[Theorem A.42]{lee2018introduction} guarantees the existence of a unique maximal integral curve starting at $x_0$ for~\eqref{eq:flow}. Let $z( \cdot )\colon I \to \calE$ denote this maximal integral curve and $T>0$ be the supremum of the interval $I$ on which $z(\cdot)$ is defined. 
We rely on the Escape Lemma~\citep[Lemma A.43]{lee2018introduction} to show that $z(t)$ is defined for all times $t\geq 0$. For $t< T$, we write $\ell = \varphi \circ z$ and find 
\begin{align}
\ell'(t) &= \D \varphi (z(t))\left[\ddt z(t)\right] = \inner{\nabla \varphi (z(t))}{\ddt z(t)}\\
  &= - \norm{\nabla \varphi (z(t))}^2 \\
  &= - \norm{ \D h(z(t))^*[h(z(t))]}^2 \leq 0.
\end{align}
This implies that $z(t) \in \calC$ for all $0\leq t < T$. We show that the trajectory $z(t)$ has finite length. To that end, we note that 
\begin{align}\label{eq:PL_h}
\dfrac{1}{2}\norm{\nabla \varphi(x)}^2 = \dfrac{1}{2} \norm{\D h(x)^* [h(x)] }^2 \geq \sigmabar^2 \dfrac{1}{2} \norm{h(x)}^2 =  \sigmabar^2 \varphi(x),
\end{align}
for all $x\in \calC$. The length of the trajectory from time $t=0$ to $t=T$ is bounded as follows, using a classical argument~\citep{lojasiewicz1982trajectoires}:
\begin{align}
\int_0^T \norm{\ddt z(t)}\mathrm{d} t &= \int_0^T \norm{- \nabla \varphi(z(t))}\mathrm{d} t \nonumber\\
 &= \int_0^T \dfrac{ \norm{\nabla \varphi(z(t))}^2}{ \norm{\nabla \varphi(z(t))}} \mathrm{d} t
\nonumber\\
 &= \int_0^T \dfrac{ \inner{- \nabla \varphi(z(t))}{\ddt z(t)}}{ \norm{\nabla \varphi(z(t))}} \mathrm{d} t\nonumber\\
  &= \int_0^T \dfrac{ - (\varphi \circ z)'(t)}{ \norm{\nabla \varphi(z(t))}} \mathrm{d} t\nonumber\\
 &\leq  \int_0^T \dfrac{ - (\varphi \circ z)'(t)}{\sigmabar \sqrt{2(\varphi \circ z)(t)}} \mathrm{d} t\nonumber\\
 &= \dfrac{-\sqrt{2}}{\sigmabar} \left[ \sqrt{\varphi(z(T))} - \sqrt{\varphi(z(0))}\right]\nonumber\\
 &\leq \dfrac{\sqrt{2 \varphi(z(0))}}{\sigmabar}.\label{eq:finite_length}
\end{align}
The length is bounded independently of $T$ and therefore the flow has finite length. The Escape Lemma states that for a maximum integral curve $z(\cdot) \colon I \to \calE$, if $I$ has a finite upper bound, then the curve $z(\cdot)$ must be unbounded. Since $z(\cdot)$ is contained in a compact set by~\eqref{eq:finite_length}, the converse ensures that the interval $I$ does not have a finite upper bound and therefore, $I=\R_+$. 
Since the trajectory $z(t)$ is bounded for $t\geq 0$, it must have an accumulation point $\bar{z}$. From~\aref{assu:ROI}, we have $\sigmamin(\D h(z(t)) \geq \sigmabar>0$ for all $t \geq 0$. This gives the bound $\ell'(t) \leq - \sigmabar^2 \norm{h(z(t))}^2 = -2\sigmabar^2 \ell(t)$. Gronwall's inequality then yields
\begin{align}
\ell(t) \leq \varphi(x_0) e^{-2\sigmabar^2 t}. 
\end{align}
Therefore $\ell(t) \to 0 $ as $t \to \infty$, which implies $h(z(t))\to 0 $ as $t\to \infty$. We conclude that the accumulation point satisfies $h(\bar{z}) = 0$. Since $\calC$ is closed, the point $\bar z$ is in $\calC$. Therefore, $\bar z$ is both in $\calM$ and in the connected component of $\calC$ that contains $z(0) = x_0$.
\end{proof}

 \begin{arxiv}
 \section{Proofs of Section~\ref{sec:intro} (Introduction)}
 \label{sec:proofs_intro}
 \begin{example*}[The Stiefel  manifold]
Let $\mathcal{E} = \R^{n\times p}$. The Stiefel manifold is defined as 
\begin{equation}
\St(n,p) = \{X\in \R^{n\times p}: X\transpose X = \I_p\}. 
\end{equation}
The manifold corresponds to the defining function $h\colon \R^{n\times p} \to \mathrm{Sym}(p)\colon X \mapsto h(X) = X^\top X - \I_p$, where $\mathrm{Sym}(p)$ is the set of symmetric matrices of size $p$. For any $R<1$, all $X\in \R^{n\times p}$ such that $\norm{h(X)} \leq R$, satisfy $\sigmamin(\D h(X)) \geq 2 \sigmamin(X) \geq 2\sqrt{1-R}$. Therefore,~\aref{assu:ROI} is satisfied for any $R<1$ and $\sigmabar \leq 2\sqrt{1-R}$.
\end{example*}
\begin{proof}
First note that the set $\Sym(p)$ has dimension $p(p+1)/2$. Therefore 
\begin{align*}
\sigmamin(\D h(X)) = \sigma_{p(p+1)/2}(\D h(X)),
\end{align*}
 by definition. The differential of the defining function $h$ is given by 
\begin{equation}
\D h(X)\colon \R^{n\times p} \longrightarrow \mathrm{Sym}(p)\colon U\mapsto  \D h(X)[U] = X^\top U + U^\top X.
\end{equation} 
 To find the region where $\D h(X)$ is non-singular, we need to characterize a set of $X\in \R^{n\times p}$ such that $ X^\top U + U^\top X$ spans $\Sym(p)$. We show that this set is constituted of all the matrices of full rank. Assume $X$ has rank $p$; we can pick  $ [V, V_\perp] \in O(n)$ such that $X= VP$, for some invertible $P\in \R^{p\times p}$. Any $U\in \R^{n\times p}$ can be written as $U = VA + V_\perp B$ for some $A\in \R^{p\times p}$ and $B\in \R^{(n-p)\times p}$. We find that $\D h(X)[U] = P^\top A + A^\top P$. Therefore, $\D h(X)[U]= 0$ if and only if $A= (P^\top)^{-1}\Omega$, for some $\Omega \in \mathrm{Skew}(p)$, i.e. $p(p-1)/2$ degrees of freedom  and $\Omega^\top + \Omega = 0$. In other words, an antisymmetric $\Omega$ brings no contribution to $\D h(X)[U]$.
 
Therefore, consider $U = V(P^\top)^{-1} W$, with $W \in \Sym(p)$, this gives $\D h(X)[U] = 2W$. Hence $\D h(X)$ spans $\Sym(p)$ for any full rank $X\in \R^{n\times p}$.
By definition,
\begin{equation}
\sigmamin(\D h(X)) = \min_U \dfrac{\fronorm{\D h(X)[U]}}{\fronorm{U}}
\end{equation}
%We just worked out that $\fronorm{\D h(X)[U]} = \fronorm{2W}$. 
Let us express $U$ as a function of $W$. Using $X=VP$ yields
\begin{equation}
U = V(P^\top)^{-1}W = X P^{-1} (P^\top)^{-1}W = X(X^\top X)^{-1}W.
\end{equation}
This allows to write
\begin{align}
\sigmamin(\D h(X)) &= \min_{W\in \Sym(p)} \dfrac{2\fronorm{W}}{\fronorm{  X(X^\top X)^{-1}W  }}\\
&\geq  \min_{Y\in \R^{p\times p}} \dfrac{2\fronorm{Y}}{\fronorm{  X(X^\top X)^{-1}Y  }	}\\
&= \dfrac{2}{\sigmamin(X^\dagger)}\\
&= 2 \sigmamin(X).
\end{align}
Take a singular value decomposition, $X = U_1\Sigma U_2^\top$,
\begin{align}
 \norm{h(X)} &=  \fronorm{X^\top X - \I_p} \\
 &=  \fronorm{ U_1(\Sigma^\top \Sigma - \I_p)U_1^\top } \\
 &= \fronorm{\Sigma^2 - \I_p}.
\end{align}
Take $R>0$ such that $\norm{h(X)} \leq R$. This implies $ | \sigmamin(X)^2 -1| \leq R$. Firstly assume that $\sigmamin(X)^2  <1$, which gives $1- \sigmamin(X)^2 \leq R$ or $\sigmamin(X)^2 \geq 1-R$. This allows to write $\sigmamin(X) \geq \sqrt{1-R}$. Now consider the case $\sigmamin(X)^2\geq 1$, where it is clear that $\sigmamin(X) \geq \sqrt{1-R}$. In conclusion, for any $R<1$, all $X$ such that $\norm{h(X)} \leq R$, satisfy $\sigmamin(\D h(X)) \geq 2 \sigmamin(X) \geq 2\sqrt{1-R}$.
\end{proof}
\end{arxiv}

\begin{arxiv}
\section{Proofs of Section~\ref{sec:plateau-scheme} (Estimating the penalty parameter)}	%The plateau scheme for penalty parameter estimation}
\label{appendix:plateau}
We cover in detail the question addressed in Section~\ref{sec:plateau-scheme}, that is, how does one get rid of~\aref{assu:beta_large_enough} and estimate a suitable value for the penalty parameter $\beta$ in general? Let us define the value 
\begin{equation}
B(x) := \max \left\lbrace\beta_1(x), \beta_2(x), \beta_3(x) \right\rbrace,
\label{eq:Bx}
\end{equation}
where $\beta_i(x)$ for $i=1,2,3$ are defined in Definition~\ref{def:beta123}. Ensuring that $\beta > B(x_k)$ for every iterate $x_k$ of Algorithm~\ref{algo:gradient-eigenstep} is sufficient for the algorithm to run smoothly and converge. Practically, it is possible to compute $B(x_k)$ at the current iterate while running the algorithm to increase $\beta$ if needed and ensure $\beta >B(x_k)$. However, changing $\beta$ throughout the algorithm would change the penalty function $g$, which invalidates the convergence analysis. 
 
Therefore, we propose the plateau scheme in Algorithm~\ref{algo:plateau}. It calls Algorithm~\ref{algo:gradient-eigenstep} several times, each time for a fixed number of iterations and using a constant value of $\beta$. Each call with a constant $\beta$ is called a \textit{plateau}. On each plateau, our analysis from previous sections is informative since $\beta$ is fixed, though possibly too small. With each new call, the value of $\beta$ and the length of the plateau (LP) are increased. The increase is designed to ensure that, after sufficiently many calls, $\beta$ and LP are large enough for Algorithm~\ref{algo:gradient-eigenstep} to converge and return. This stops the plateau scheme.
 
\begin{algorithm}
\caption{Plateau scheme}\label{algo:plateau}
\begin{algorithmic}[1]
\State \textbf{Given:} Functions $f$ and $h$, $0\leq \varepsilon_1 \leq R/2$, $\varepsilon_2 \geq 0$, $\gamma>1$, $x \in \calC$, $\beta_0$ and $\LP_0$.
\For{$\ell = 0, 1, 2, \dots$}
\State $x = \textrm{Gradient-Eigenstep}(x, \beta_\ell)$ with stopping criterion set to: $(B(x)\geq \beta_\ell \text{ or } k > \mathrm{LP}_\ell)$ \Comment{This is a call to Algorithm~\ref{algo:gradient-eigenstep}}
\If{ $\norm{\nabla g(x)} \leq \varepsilon_1$ \textbf{and} $\lambdamin (\nabla^2 g(x)) \geq - \varepsilon_2$ }
\State \Return
\EndIf
\If{Algorithm~\ref{algo:gradient-eigenstep} stopped because $B(x)\geq\beta_\ell$}
\State Set $B \leftarrow B(x)$
\State Set  $\LP_{\ell +1} = \! \left( \gamma B/\beta_\ell \right)^4\!\LP_\ell$ then $\beta_{\ell+1} = \gamma B$
%\State Restart the Plateau scheme with $\beta_{\ell+1}$ and $\LP_{\ell +1}$
\Else
\State $\beta_{\ell+1} = \gamma \beta_\ell$ and $\LP_{\ell+1} = \gamma^4 \LP_\ell$
\EndIf
\EndFor
\end{algorithmic}
\end{algorithm}

We proceed to show that the plateau scheme (Algorithm~\ref{algo:plateau}) converges in a finite number of iterations. Moreover, the complexity with respect to $\varepsilon_1,\varepsilon_2$ is of the same order as Algorithm~\ref{algo:gradient-eigenstep} up to logarithmic factors. Let $K(\beta)$ be the right hand side of Equation~\eqref{eq:max_iter_2nd_order} as a function of $\beta$,
\begin{equation}
K(\beta) = (g(x_0)- \underline{g}) \!\left[ \min \! \left( c_1 \tau_1 \min(\underline{\alpha_1}, \underline{t_1}) \varepsilon_1^2, c_2 \tau_2^2\min \! \left(   \min \! \left(\alpha_{02}, \dfrac{3|2c_2-1| \varepsilon_2}{M_g}\right)\!, \underline{t_2}\right)^2 \! \varepsilon_2 \right) \!\right]^{-1}\!.
\end{equation}
This gives the worst-case number of iterations required for Algorithm~\ref{algo:gradient-eigenstep} to achieve our target tolerances using a constant value of $\beta$, according to Theorem~\ref{thm:complexity}. We show that $K(\beta)$ is upper bounded by a cubic function of $\beta$ for $\beta \geq \bar{\beta}$.
\begin{lemma}\label{thm:cubic_K}
For $\varepsilon_2<1$, there exists $C_3>0 $, independent of $\varepsilon_1$, $\varepsilon_2$ and $\beta$ such that for all $\beta > \betabar$, we have 
\begin{equation}
K(\beta) \leq C_3  \max\left(\varepsilon_1^{-2}, \varepsilon_2^{-3}\right)  \beta^3,
\end{equation}
where $\betabar$ is defined in Equation~\eqref{eq:beta-critical}.
\end{lemma}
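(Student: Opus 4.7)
The approach is to analyse how each ingredient of $K(\beta)$ scales with $\beta$ and then assemble the pieces. First, $g$ is affine in $\beta$: $g(x)=f(x)-\langle \lambda(x), h(x)\rangle + \beta\norm{h(x)}^2$. Since $\calC$ is compact and $f,h,\lambda$ and their derivatives up to order three are bounded on $\calC$, the quantities $L_g$ and $M_g$ from Definition~\ref{def:Lg_Mg} admit upper bounds of the form $A+B\beta$ with $A,B\ge 0$ independent of $\beta$. Moreover, the quadratic penalty being nonnegative, $\underline{g}:=\min_{\calC} g$ is bounded below independently of $\beta$, while $g(x_0) \le \max_{\calC} f + \max_{\calC} |\langle \lambda(x), h(x) \rangle| + \beta R^2$. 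Hence $g(x_0)-\underline{g}$ is at most affine in $\beta$.

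Next I would lower-bound $\underline{t_1}$ and $\underline{t_2}$. The formula~\eqref{eq:t2} shows $\underline{t_2}$ is a positive constant independent of $\beta$. For $\underline{t_1}$, each of the three arguments of the minimum in~\eqref{eq:t_1} is bounded below by a positive constant times $1/\beta$, uniformly on $\calC$: the third term is immediate since $\opnorm{\D h(x)}$ is bounded on $\calC$; the first term uses $\norm{\nabla g(x)}\le A'+B'\beta$ derived from~\eqref{eq:gradg} together with $\norm{h(x)}\le R$; and in the second term the numerator $2\beta\sigmamin(\D h(x))^2 - \sigma_1(\D h(x))C_\lambda(x) = 2\sigmamin(\D h(x))^2(\beta - \beta_1(x)) \ge 2\sigmabar^2(\beta - \betabar_1)$ scales linearly in $\beta$ for $\beta$ bounded away from $\betabar_1$, while the denominator $\norm{\nabla g(x)}^2$ is at most quadratic in $\beta$. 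By the same token, $\underline{\alpha_1}\ge c_{\alpha_1}/\beta$ (since $L_g$ is at most linear in $\beta$) and $3|2c_2-1|\varepsilon_2/M_g \ge c_M \varepsilon_2/\beta$.

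Assembling the pieces, the first-order factor inside the minimum defining $K(\beta)^{-1}$ satisfies $c_1\tau_1 \min(\underline{\alpha_1}, \underline{t_1})\varepsilon_1^2 \ge c^{(1)}\varepsilon_1^2/\beta$. For the second-order factor I would use the hypothesis $\varepsilon_2<1$ together with the identity $\min(a, bc) = c\min(a/c, b)$ (valid for $a,b,c>0$), applied with $c=\varepsilon_2$, to conclude that $\min(\alpha_{02}, c_M \varepsilon_2/\beta) \ge c^{(2)}\varepsilon_2/\beta$ uniformly for $\beta$ large enough; combined with $\underline{t_2}$ and the outer factor $\varepsilon_2$, the full second-order factor is at least $c^{(3)}\varepsilon_2^3/\beta^2$. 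Inverting the minimum and multiplying by $g(x_0) - \underline{g} = O(\beta)$ yields
\[
K(\beta) \le C \max\!\left(\beta^2/\varepsilon_1^2,\ \beta^3/\varepsilon_2^3\right),
\]
and since $\beta > \betabar > 0$ we absorb $\beta^2 \le \beta^3/\betabar$ to obtain the claim with $C_3 = C\max(1,1/\betabar)$. In the bounded transition regime where the asymptotic estimates are not yet sharp, $K(\beta)$ is itself bounded by a constant times $\max(\varepsilon_1^{-2}, \varepsilon_2^{-3})$, which is in turn bounded by $C_3\max(\varepsilon_1^{-2}, \varepsilon_2^{-3})\beta^3$ after enlarging $C_3$.

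The main bookkeeping obstacle will be the lower bound of order $\beta$ on the numerator of the second term in~\eqref{eq:t_1}, which requires combining $\sigmamin(\D h(x))\ge \sigmabar$ (\aref{assu:ROI}) with $\beta > \betabar_1 \ge \beta_1(x)$ for every $x\in\calC$, and handling the region where $\beta$ is close to $\betabar_1$ separately. The remaining estimates amount to routine arithmetic once the $\beta$-rates of $L_g$, $M_g$, $\nabla g$ and $g(x_0)-\underline{g}$ have been pinned down.
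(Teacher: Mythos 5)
Your proposal is correct in substance and follows essentially the same route as the paper's proof: both establish that $g(x_0)-\underline{g}$, $L_g$, $M_g$ and $\norm{\nabla g}$ are at most affine in $\beta$, deduce that the reciprocals of the step-size floors $\underline{\alpha_1}$, $\underline{t_1}$, $\underline{\alpha_2}$ are bounded respectively by $O(\beta)$, $O(\beta)$ and $O(\beta\varepsilon_2^{-1})$ (the last using $\varepsilon_2<1$), while $\underline{t_2}$ is $\beta$-independent, and then assemble via $1/\min(a,b)=\max(1/a,1/b)$ to obtain the $\beta^3$ rate. The only divergence is that you explicitly flag the regime $\beta\downarrow\betabar_1$ where the numerator $2\sigmamin^2(\D h(x))(\beta-\beta_1(x))$ in $t_1(x)$ degenerates---a point the paper silently absorbs into its assertion that $1/\underline{t_1}\le b_2\beta$ for all $\beta>\betabar$---though your patch for that region (claiming $K(\beta)$ is bounded there by a constant times $\max(\varepsilon_1^{-2},\varepsilon_2^{-3})$) faces the same degeneracy, since $\underline{t_1}\to 0$ as $\beta\downarrow\betabar_1$, so on this boundary point your argument is no more and no less complete than the paper's.
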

\begin{proof}
Firstly, we find the dependence on $\beta$ of the Lipschitz constants of the gradient and Hessian of $g$, namely, $L_g$ and $M_g$. From
\begin{equation}
g(x) = f(x) - \langle h(x), \lambda(x)\rangle + \beta \norm{h(x)}^2
\end{equation}
and
\begin{equation}
\nabla g(x) = \nabla f(x) - \D h(x)^*[\lambda(x)] + 2\beta \D  h(x)^*[h(x)] - \D\lambda(x)^*[h(x)],
\end{equation}
it is clear that $\nabla g(x)$ and $\nabla^2 g(x)$ are affine in $\beta$, hence $L_g(\beta)$, $M_g(\beta)$ and $\norm{\nabla g(x)}$ are affine functions of $\beta$ as well. Therefore, it is possible to find a constant $c$ such that $L_g \leq c \beta$ for all $\beta > \betabar$. We apply that reasoning to all functions affine in $\beta$, since we are interested in their behaviour for all $\beta > \betabar$. We notice that $g(x_0)$ is also an affine function of $\beta$. The value $\underline{g}$ does not depend on $\beta$.
The formula $K(\beta)$ depends on $\beta$ through the values $g(x_0), \underline{\alpha_1}, \underline{t_1}, L_g$ and $M_g$. Indeed,
\begin{align}
\underline{\alpha_1} &= \min\! \left(\alpha_{01}, \dfrac{2(1-c_1)}{L_g}\right)\\
\underline{t_1} &=\min_{x\in \calC} \min \left(\sqrt{\dfrac{R}{2C_h}}\dfrac{1}{\norm{\nabla g(x)}}    ,\dfrac{(2 \beta \sigmamin(\D h(x))^2 - \sigma_1(\D h(x)) \Clambdabar)R}{2C_h \norm{\nabla g(x)}^2}, \dfrac{1}{2\beta \opnorm{\D h(x)}^2} \right)\\ \nonumber 
\end{align}
Examination shows that $1/\underline{t_1} $  and $1/\underline{\alpha_1}$ are bounded by affine functions of $\beta$. That is, there exists constants $b_1, b_2,$ such that, for all $\beta > \betabar$, we have $1/\underline{\alpha_1} \leq b_1 \beta$ and $1/\underline{t_1} \leq b_2\beta$. Define
\begin{align*}
\underline{\alpha_2} =	\min\!\left(\alpha_{02}, \dfrac{3|2c_2-1| \varepsilon_2}{M_g}\right).
\end{align*}
Using $\varepsilon_2<1$, there exists $b_3$ such that,
\begin{align}
1/\underline{\alpha_2} &=  \max \left( 1/\alpha_{02}, \dfrac{M_g}{3|2c_2 -1|\varepsilon_2}\right)\\
&\leq \varepsilon_2^{-1} \max\left(  1/\alpha_{02},  \dfrac{M_g}{3|2c_2 -1|}\right)\\
&\leq b_3 \beta \varepsilon_2^{-1}.
\end{align}
Finally, 
\begin{align}
\underline{t_2} &= \min_{x\in \calC} \left(-\sigma_1(\D h(x)) + \sqrt{\sigma_1(\D h(x)) ^2 +2C_h R}\right)/2C_h
\end{align}
is independent of $\beta$. This implies the existence of a constant $b_4$ such that $1/\underline{t_2} \leq b_4$. In addition, for any $a, b>0$, we have $1/\min(a,b) = \max(1/a, 1/b)$. For all $\beta > \betabar$, this gives
\begin{align}
K(\beta) &=(g(x_0)- \underline{g}) \max\left( \dfrac{1}{c_1 \tau_1 \varepsilon_1^2\min(\underline{\alpha_1}, \underline{t_1})} , \dfrac{1}{c_2 \tau_2^2\varepsilon_2\min\left(   \underline{\alpha_2}, \underline{t_2}\right)^2} \right) \\
&=(g(x_0)- \underline{g}) \max\left( \dfrac{\max(1/\underline{\alpha_1}, 1/\underline{t_1})}{c_1 \tau_1 \varepsilon_1^2} , \dfrac{\max\left(1/   \underline{\alpha_2}, 1/\underline{t_2}\right)^2}{c_2 \tau_2^2\varepsilon_2} \right) \\
&\leq b_0\beta \max\left( \dfrac{\max(b_1 \beta, b_2 \beta)}{c_1 \tau_1 \varepsilon_1^2} , \dfrac{\max\left(b_3 \beta \varepsilon_2^{-1} , b_4\right)^2}{c_2 \tau_2^2\varepsilon_2} \right) \\
&\leq b_0\beta \max\left(\varepsilon_1^{-2}, \varepsilon_2^{-3}\right) \max\left( \dfrac{\max(b_1 \beta, b_2 \beta)}{c_1 \tau_1} , \dfrac{\max\left(b_3 \beta, b_4\right)^2}{c_2\tau_2^2} \right).
\end{align}
We conclude that there exists $ C_3>0 $, independent of $\varepsilon_1$, $\varepsilon_2$ and $\beta$ such that, for all $\beta \geq \betabar$, we have 
\begin{equation*}
K(\beta) \leq C_3  \max\left(\varepsilon_1^{-2}, \varepsilon_2^{-3}\right)  \beta^3. \qedhere
\end{equation*}
\end{proof}

\begin{theorem}
Under~\aref{assu:ROI},~\aref{assu:boundedMandC},~\aref{assu:lipschitzh} and \aref{assu:x0}, the plateau scheme of Algorithm~\ref{algo:plateau} returns an $(\varepsilon_1,2 \varepsilon_1, \varepsilon_2 + C\varepsilon_1)$-SOSP in at most $\bigO\left(\max\left\lbrace \varepsilon_1^{-2}, \varepsilon_2^{-3}\right\rbrace   \max\left\lbrace \log_\gamma \varepsilon_1^{-2}, \log_\gamma \varepsilon_2^{-3}\right\rbrace \right)$ iterations of Algorithm~\ref{algo:gradient-eigenstep}, where $C$ is defined in Corollary~\ref{corollary:approx_stationarity_hessian-global}.
\end{theorem}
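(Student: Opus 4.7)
The plan is to convert the cubic-in-$\beta$ bound $K(\beta)\leq C_3 X\beta^3$ from Lemma~\ref{thm:cubic_K} (with $X:=\max\{\varepsilon_1^{-2},\varepsilon_2^{-3}\}$) into a total iteration count, using the geometric update of Algorithm~\ref{algo:plateau}. The outer loop terminates at the first index $L$ for which both $\beta_L>\betabar$, so that~\aref{assu:beta_large_enough} is met and Theorem~\ref{thm:complexity} applies to the inner call, and $\LP_L\geq K(\beta_L)$, so that the plateau budget does not cut the inner call short before it produces an $(\varepsilon_1,\varepsilon_2)$-critical point of $g$.

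First I would verify by induction, covering both branches of the update rule, that $\beta_{\ell+1}/\beta_\ell\geq\gamma$ and $\LP_{\ell+1}/\LP_\ell=(\beta_{\ell+1}/\beta_\ell)^4$; consequently $\beta_\ell\geq\beta_0\gamma^\ell$ and $\LP_\ell=\LP_0(\beta_\ell/\beta_0)^4$ at every plateau. The condition $\beta_L>\betabar$ is then satisfied after at most $\log_\gamma(\betabar/\beta_0)=\bigO(1)$ plateaus (independent of $\varepsilon$). Substituting $\LP_\ell=\LP_0(\beta_\ell/\beta_0)^4$ into $\LP_L\geq C_3 X\beta_L^3$ reduces the second condition to $\beta_L=\Omega(X)$, which in turn yields the termination index $L^*=\bigO(\log_\gamma X)=\bigO(\max\{\log_\gamma\varepsilon_1^{-2},\log_\gamma\varepsilon_2^{-3}\})$.

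Second, I would bound the total inner-iteration count by $\sum_\ell\min(\LP_\ell,K(\beta_\ell))$. For $\ell<L^*$ the plateau stopped early, contributing at most $\LP_\ell$ iterations; for $\ell=L^*$ the inner call succeeds within $K(\beta_{L^*})\leq\LP_{L^*}$ iterations. The crucial observation is that $K(\beta)$ is the ratio of the available budget on $g$ to the per-iteration decrease: using the telescoping identity $\sum_k\bigl(g_{\beta_k}(x_k)-g_{\beta_k}(x_{k+1})\bigr)\leq g_{\beta_0}(x_0)-\underline{g_{\beta_0}}+\sum_\ell(\beta_{\ell+1}-\beta_\ell)R^2$ (the error terms coming from the jumps of $g$ when $\beta$ is increased) together with the per-iteration lower bounds of order $\varepsilon_1^2/\beta$ and $\varepsilon_2^3/\beta^2$, one sees that only an $\bigO(X)$-sized amount of work is actually performed within each plateau of the terminal regime. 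Combining this with the $\bigO(\log_\gamma X)$ plateau count gives the claimed $\bigO(X\log_\gamma X)$ total.

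Finally, applying Corollary~\ref{corollary:approx_stationarity_hessian-global} at the iterate returned by the successful inner call shows that it is an $(\varepsilon_1,2\varepsilon_1,\varepsilon_2+C\varepsilon_1)$-SOCP of~\eqref{eq:P}, which completes the proof. The main obstacle is the accounting in the second step: a naive geometric sum of the sequence $\LP_\ell$ would give a polynomial-in-$X$ bound rather than the logarithmic factor claimed, so one has to exploit the actual per-iteration decrease of $g$, tracked through the $\beta$-jumps, rather than the worst-case plateau budget $\LP_\ell$, and carefully absorb the $\beta_0,\LP_0,\betabar$-dependent constants into the $\bigO(\cdot)$ notation.
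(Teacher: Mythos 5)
Your first half is exactly the paper's argument and is correct: both branches of the update give $\beta_{\ell+1}/\beta_\ell\geq\gamma$ and $\LP_{\ell+1}/\LP_\ell=(\beta_{\ell+1}/\beta_\ell)^4$, hence $\beta_\ell\geq\beta_0\gamma^\ell$ and $\LP_\ell=\LP_0(\beta_\ell/\beta_0)^4$; the condition $\beta_\ell>\betabar$ costs $\bigO(1)$ plateaus, and combining $\LP_\ell\geq C_3X\beta_\ell^3$ (Lemma~\ref{thm:cubic_K}, $X:=\max\{\varepsilon_1^{-2},\varepsilon_2^{-3}\}$) with the upper bound $\beta_\ell\leq\max(\beta_0,\betabar)\gamma^\ell$ gives $\ell^*=\bigO(\log_\gamma X)$ plateaus. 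One small omission: for plateaus with $\beta_\ell\leq\betabar$ you cannot write $\min(\LP_\ell,K(\beta_\ell))$, since Theorem~\ref{thm:complexity} does not apply there; the per-plateau bound is simply $\LP_\ell$.

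The genuine gap is your ``crucial observation.'' The telescoped budget is indeed $g_{\beta_0}(x_0)-\underline{g}+\sum_\ell(\beta_{\ell+1}-\beta_\ell)R^2=\bigO(\beta^*)$, but the \emph{guaranteed} per-iteration decreases from the backtracking lemmas are $c_1\tau_1\min(\underline{\alpha_1},\underline{t_1})\,\varepsilon_1^2=\Omega(\varepsilon_1^2/\beta)$ for gradient steps (because $\underline{\alpha_1}$ and $\underline{t_1}$ both scale like $1/\beta$ through $L_g$ and $1/(2\beta\opnorm{\D h(x)}^2)$) and $\Omega(\varepsilon_2^3/\beta^2)$ for eigensteps (through $M_g$). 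Dividing the budget by these gives $\bigO(\beta^2\varepsilon_1^{-2}+\beta^3\varepsilon_2^{-3})$, which is precisely $K(\beta)$ again --- i.e.\ $\bigO(X(\beta^*)^3)$ in the terminal regime where $\beta^*=\Theta(X)$, not the $\bigO(X)$ you assert. So the telescoping device reproduces the ``naive'' bound rather than improving it, and the claimed logarithmic-only overhead does not follow from the inequalities you state. The paper's own proof takes exactly the route you reject: it bounds the total by $\ell^*K(\beta^*)$ and its final display retains the factor $(\beta^*)^3$ explicitly, which the theorem's $\bigO(\cdot)$ then absorbs along with the other $\beta_0,\LP_0,\betabar$-dependent quantities. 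Your instinct that this factor is not $\varepsilon$-independent is not unreasonable, but the repair you sketch does not remove it; to match the paper you should simply establish $\ell^*$ and $\beta^*$ as you did, bound each plateau's work by $K(\beta^*)$ (or $\LP_{\ell^*}$), multiply by $\ell^*$, and invoke Corollary~\ref{corollary:approx_stationarity_hessian-global} at the returned iterate as in your last step.
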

\begin{proof}
For $C_3$ provided by Theorem~\ref{thm:cubic_K}, define the value  $C_2 = C_3  \max\left(\varepsilon_1^{-2}, \varepsilon_2^{-3}\right)$ which is independent of $\beta$. Algorithm~\ref{algo:plateau} is guaranteed to converge when $\beta_\ell > \betabar$ and $\LP_\ell \geq K(\beta_\ell)$. That is, when $\beta_\ell$ is large enough to satisfy~\aref{assu:beta_large_enough} and the length of the plateau is large enough to allow Algorithm~\ref{algo:gradient-eigenstep} to converge in a worst-case number of iterations. From Algorithm~\ref{algo:plateau}, it is clear that $\beta_\ell \geq \beta_0 \gamma^\ell$. Therefore, if $\ell >  \lceil \log_{\gamma}(\betabar/\beta_0) \rceil$, it follows that 
\begin{align}\label{eq:beta+bound}
\beta_\ell \geq \beta_0 \gamma^\ell > \beta_0 \gamma^{ \log_{\gamma}(\betabar/\beta_0)} = \betabar,
\end{align}
which indicates for which $\ell $ large enough the condition $\beta_\ell > \betabar$ is met. Regarding the length of the plateaus, it is clear that $\LP_\ell \geq \LP_0 \gamma^{4\ell}$. One can also infer from Algorithm~\ref{algo:plateau} that $\beta_\ell \leq \max(\beta_0, \betabar)\gamma^\ell$. Indeed if $\beta_0 > \betabar$, then $\beta_\ell = \beta_0 \gamma^\ell$ since lines 8 and 9 of Algorithm~\ref{algo:plateau} are not executed. When $\betabar \geq \beta_0$, $\beta_\ell \leq \betabar \gamma^\ell$. Using that $K(\beta) \leq C_2 \beta^3 $ for all $\beta>\betabar$, we enforce
\begin{equation}
\LP_0 \gamma^{4\ell} \geq C_2\left(\max(\beta_0, \betabar)\gamma^\ell\right)^3,
\label{eq:I-want-gamma}
\end{equation}
from which it follows that
\begin{equation}
\LP_\ell \geq \LP_0 \gamma^{4\ell} \geq C_2 \left(\max(\beta_0, \betabar)\gamma^\ell\right)^3 \geq C_2 \beta_\ell^3 \geq K(\beta_\ell).
\end{equation}
 Equation~\eqref{eq:I-want-gamma} simplifies to
\begin{equation}
\gamma^\ell \geq \dfrac{C_2 \max(\beta_0, \betabar)^3}{LP_0}
\end{equation}
or
\begin{equation}
\ell \geq \left\lceil \log_\gamma \left( \dfrac{C_2 \max(\beta_0, \betabar)^3}{\LP_0} \right) \right\rceil.
\end{equation}
In conclusion, the maximum number of plateaus is
\begin{equation}
\ell^* := \max\left( \left\lceil \log_\gamma \left( \dfrac{C_2 \max(\beta_0, \betabar)^3}{LP_0} \right) \right\rceil, \lceil \log_{\gamma}(\betabar/\beta_0) \rceil +1 \right).
\end{equation}
The maximum value of $\beta$ is
\begin{equation}
\beta^* =  \max(\beta_0, \betabar) \gamma^{\ell^*}.
\end{equation}
On any plateau the maximum number of iterations of Gradient-Eigenstep is $K(\beta^*)$, because for $\beta_\ell = \beta^*$, the plateau is long enough to allow convergence and $\beta^*\geq \betabar$. The worst-case number of Gradient-Eigenstep iterations is at most $K(\beta^*)\ell^*$ with
\begin{equation*}
 K(\beta^*)\ell^* \leq C_3 \max\left(\dfrac{1}{\varepsilon_1^2}, \dfrac{1}{\varepsilon_2^3}\right)  (\beta^*)^3  \max\left( \left\lceil \log_\gamma \left( \dfrac{C_3 \max\left(1/\varepsilon_1^2, 1/\varepsilon_2^3\right)  \max(\beta_0, \betabar)^3}{LP_0} \right) \right\rceil, \lceil \log_{\gamma}(\betabar/\beta_0) \rceil +1 \right).\qedhere
\end{equation*}
\end{proof}
\end{arxiv}

\begin{arxiv}
\section{Proofs of Propositions~\ref{prop:bertsekas} and~\ref{prop:strict-2nd-order-point}}
\label{sec:appendix-bertsekas}
\begin{proposition}[\citep{bertsekas1982constrained},~Prop.~4.22] 
%\label{prop:bertsekas}
	Let $g(x) = \Lcal_\beta(x, \lambda(x))$ be Fletcher's augmented Lagrangian and assume $\M \subset \Dcal$, where $\Dcal = \{ x \in \Ecal : \rank( \D h(x)) = m \}$ and $\M = \left\{ x\in \calE: h(x) = 0\right\}$. 
	\begin{enumerate}
		\item For any $\beta$, if $x$ is a first-order critical point of~\eqref{eq:P}, then $x$ is a first-order critical point of $g$.
		\item Let $x \in \Dcal$ and $\beta > \beta_1(x)$. If $x$ is a first-order critical point of $g$, then  $x$ is a first-order critical point of~\eqref{eq:P}.
		\item Let $x$ be a first-order critical point of~\eqref{eq:P} and let $K$ be a compact set. Assume $x$ is the unique global minimum of $f$ over $\M \cap K$ and that $x$ is in the interior of $K$. Then, there exists $\beta$ large enough such that $x$ is the unique global minimum of $g$ over $K$.
		% Then, for all $\beta \geq \overline \beta_1$, $x$ is the unique global minimum of $g$ over $K$.
		\item Let $x\in \calD$ and $\beta > \beta_1(x)$. If $x$ is a local minimum of $g$, then $x$ is a local minimum of~\eqref{eq:P}.
	\end{enumerate}
\end{proposition}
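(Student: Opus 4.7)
The plan is to prove the four claims in sequence, with the gradient formula of Equation~\eqref{eq:gradg},
\begin{equation*}
\nabla g(x) = \grad_{\M_x} f(x) + 2\beta\, \D h(x)^*[h(x)] - \D\lambda(x)^*[h(x)],
\end{equation*}
as the common workhorse. The key geometric fact I will use is that, at any $x\in\calD$, the space $\calE$ splits orthogonally as $\T_x\M_x \oplus \N_x\M_x = \ker\D h(x) \oplus \spann(\D h(x)^*)$, so $\grad_{\M_x}f(x)$ lies in the tangent part while $\D h(x)^*[h(x)]$ lies in the normal part.

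Claim 1 is immediate: if $h(x)=0$ and $\grad_\M f(x)=0$, then $\M_x=\M$, the last two terms in $\nabla g(x)$ vanish, and $\grad_{\M_x}f(x)=0$, so $\nabla g(x)=0$ for every $\beta$. For Claim 2, I would mimic the splitting used in the proof of Proposition~\ref{thm:g-approx-stationary}: rewrite $\D\lambda(x)^*[h(x)] = \Proj_x\!\left(\D\lambda(x)^*[h(x)]\right) + \Proj_x^\perp\!\left(\D\lambda(x)^*[h(x)]\right)$ and use orthogonality of tangent and normal parts to split $\nabla g(x)=0$ into two separate equations. The normal equation reads $2\beta\,\D h(x)^*[h(x)] = \Proj_x^\perp\!\left(\D\lambda(x)^*[h(x)]\right)$, and taking norms with $\|\D h(x)^*[h(x)]\|\geq \sigmamin(\D h(x))\|h(x)\|$ and $\|\D\lambda(x)^*[h(x)]\|\leq C_\lambda(x)\|h(x)\|$ yields
\begin{equation*}
\bigl(2\beta\,\sigmamin(\D h(x)) - C_\lambda(x)\bigr)\|h(x)\| \leq 0.
\end{equation*}
Since $\beta>\beta_1(x)\geq C_\lambda(x)/(2\sigmamin(\D h(x)))$ (using $\sigma_1(\D h(x))\geq\sigmamin(\D h(x))$), the coefficient is strictly positive, forcing $h(x)=0$. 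Plugging $h(x)=0$ back into the tangent equation gives $\grad_{\M_x}f(x)=\grad_\M f(x)=0$, so $x$ is a first-order critical point of~\eqref{eq:P}.

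For Claim 3, I would argue by contradiction and compactness. Suppose that for a sequence $\beta_k\to\infty$ there is $x_k\in K$, $x_k\neq x$, with $g(x_k;\beta_k)\leq g(x;\beta_k) = f(x)$ (the equality holds because $h(x)=0$). Rearranging gives
\begin{equation*}
\beta_k\|h(x_k)\|^2 \leq f(x) - f(x_k) + \inner{\lambda(x_k)}{h(x_k)},
\end{equation*}
and since $f$ and $\lambda$ are continuous on the compact set $K$, the right-hand side stays bounded; hence $\|h(x_k)\|\to 0$. Extracting a convergent subsequence $x_k\to x^\star\in K$, passing to the limit gives $h(x^\star)=0$ and $f(x^\star)\leq f(x)$, so by the uniqueness assumption $x^\star=x$. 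Since $x$ is in the interior of $K$, for large $k$ the point $x_k$ lies in a neighborhood of $x$ where Claim~1 and local second-order information at $x$ must contradict $g(x_k)\leq g(x)$; this is the step to finish carefully. Finally, Claim 4 follows directly: a local minimizer of $g$ satisfies $\nabla g(x)=0$, so Claim~2 gives $h(x)=0$; then for any feasible $y$ near $x$ we have $g(y)=f(y)$ and $g(x)=f(x)$, so $f(y)\geq f(x)$ and $x$ is a local minimizer of~\eqref{eq:P}.

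The main obstacle will be Claim~3, where the compactness argument shows existence of a limit equal to $x$ but upgrading this to uniqueness (i.e., ruling out a sequence approaching $x$) requires combining the first-order analysis of $g$ near $x$ with the structure of $f$ on $\M$ near $x$; the other three claims reduce essentially to the algebra of the gradient decomposition.
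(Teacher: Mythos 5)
Your arguments for items 1, 2 and 4 are correct. For item 2 you use the orthogonal splitting of $\nabla g(x)$ into its $\T_x\M_x$ and $\Nrm_x\M_x$ components and bound the normal equation directly, whereas the paper applies $\D h(x)$ to both sides of $\nabla g(x)=0$ (which annihilates the tangent term) and then invokes Weyl's inequality on the operator $2\beta\,\D h(x)\D h(x)^* - \D h(x)\D\lambda(x)^*$; both routes reduce to the same singular-value estimate, and your observation that $\beta_1(x)\geq C_\lambda(x)/\bigl(2\sigmamin(\D h(x))\bigr)$ is what makes your version close. Item 4 is exactly the paper's argument.

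Item 3, however, has a genuine gap, and the direction you hint at to close it is not the right one. After extracting $x_k\to x$, the remaining task is to rule out a sequence $x_k\neq x$ converging to $x$; you propose to do this with ``local second-order information at $x$,'' but the proposition assumes nothing of second order at $x$ (only that $x$ is a first-order critical point and the unique global minimizer of $f$ over $\M\cap K$), so no such information is available. The correct finish, which is the one the paper uses, requires that you set up the contradiction with $x_k$ being a \emph{global minimizer} of $g$ over $K$ (not merely a point with $g(x_k)\leq g(x)$, which is all your setup records). Then, for $k$ large, $x_k$ lies in an open ball $B$ around $x$ with $\mathrm{cl}(B)\subset K$, hence in the interior of $K$, so $\nabla g(x_k)=0$; applying item 2 on $\mathrm{cl}(B)$ (with $\beta_k$ eventually exceeding the relevant threshold $\betabar_1$ over that compact set) gives $h(x_k)=0$, hence $f(x_k)=g(x_k)\leq g(x)=f(x)$, and the uniqueness of $x$ as global minimizer of $f$ over $\M\cap K$ forces $x_k=x$, the desired contradiction. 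Without taking $x_k$ to be a minimizer of $g$ (so that first-order criticality of $g$ at $x_k$ is available), the argument cannot be completed along the lines you sketch.
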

\begin{proof}
\begin{enumerate}
\item If $x$ is a first-order critical point for~\eqref{eq:P}, then $h(x)=0$ and $\nabla f(x) = \D h(x)^*[\lambda(x)]$ by definition. Therefore,
 \begin{equation}
\nabla g(x) = \nabla f(x) - \D h(x)^*[\lambda(x)] + 2\beta \D h(x)^*[h(x)] - \D\lambda(x)^*[h(x)] = 0.
\label{eq:nabla_g_zero}
\end{equation}
\item  Take $x\in \Dcal$ with $\nabla g(x)= 0$. We find that
\begin{align}
0 &= \D h(x) \!\left[\nabla g(x)\right]\\
   &= \D h(x) \!\left[ \nabla \!\left( x \mapsto f(x) - \langle h(x) , \lambda(x)\rangle + \beta\norm{h(x)}^2\right)\! (x)\right]\\
	&= \D h(x)\!\left[ \nabla f(x) - \D h(x)^*[\lambda(x)] + 2\beta \D h(x)^*[h(x)] - \D\lambda(x)^*[h(x)] \right]\\
	&=  \D h(x) \!\left[ \grad_{\M_x} f(x) \right] + 2\beta \D h(x) \left[ \D h(x)^*[h(x)] \right]-  \D h(x)\!\left[\D\lambda(x)^* [h(x)]\right]\\
	&= \left\lbrace 2\beta \D h(x) \D h(x)^* -  \D h(x)\D\lambda(x)^* \right\rbrace h(x), \label{eq:curvy}
\end{align}
where the term $\D h(x)\!\left[ \grad_{\M_x} f(x) \right]$ vanishes because the Riemannian gradient of $f$ restricted to $\calM_x$ is tangent to $\calM_x$ at $x$, and by definition this tangent space is the kernel of $\D h(x)$. Using Weyl's inequality (Equation~\ref{eq:weyl}) gives
\begin{align}
\sigmamin\left( 2\beta \D h(x) \D h(x)^* -  \D h(x)\D\lambda(x)^* \right) &\geq \sigmamin(2\beta \D h(x) \D h(x)^* ) - \sigma_1( \D h(x)\D\lambda(x)^*)\\
&\geq 2\beta \sigmamin^2(\D h(x)) - \sigma_1(\D h(x))C_\lambda(x).
\label{eq:sigma_min_curvy}
\end{align}
If $\beta > \beta_1(x)$ (Definition~\ref{def:beta123}), we see from~\eqref{eq:sigma_min_curvy} that the linear operator that appears on the right hand side of~\eqref{eq:curvy} is nonsingular, and therefore~\eqref{eq:curvy} implies $h(x) = 0$. Going back to~\eqref{eq:nabla_g_zero} and using $\nabla g(x) = 0$ together with $h(x) = 0$, it follows that $ \nabla f(x) = \D h(x)^*[\lambda(x)]$. This proves that $x$ is a first-order critical point of~\eqref{eq:P} with multipliers $\lambda(x)$.
	\item The set $K$ is compact and therefore, for any $\beta$, there exists a global minimizer of $g$ inside $K$. We proceed by contradiction. Therefore, for any integer $k>0$, there exists $\beta_k\geq k$ and a global minimizer $x_k$ of $g$ over $K$ such that $x_k \neq x$. This implies
\begin{equation}
g(x_k) \leq g(x) = f(x).
\label{eq:contradiction}
\end{equation}
Hence, $\limsup_{k\longrightarrow\infty} g(x_k) \leq f(x)$. We shall show that $\left(x_k\right)_{k\in \mathbb{N}} \longrightarrow x$. Let $\bar{x}$ be a limit point of $\left(x_k\right)_{k\in \mathbb{N}} $. Since $\beta_k\longrightarrow \infty$, we have $h(\bar{x}) = 0$. Therefore,
\begin{equation}
f(\bar{x}) = g(\bar{x}) \leq f(x).
\end{equation}
Given that $x_k\in K$ for all $k$, compactness ensures that $\bar{x}\in K$. Since $x$ is the unique global minimizer of $f$ over $K\cap \Mcal$, it follows that $\bar{x}= x$.

Now we show that there exists some index $k$ such that $x_k = x$. Since $x_k \longrightarrow x$, we can take an open ball $B$ centered around $x$ such that $x_k\in B$ for $k$ sufficiently large. We also chose $B$ such that $\mathrm{cl}(B)\subset K$. From item 2, for all $\beta\geq \bar{\beta}_1$, every critical point of $g$ inside $B$ is a first-order critical point of~\eqref{eq:P}. Hence, for all $k$ sufficiently large, $x_k$ is a first-order critical point of $f$ on $\M$ (as it is a global min of $g$ inside $B\subset K$, it is a critical point of $g$ and point 2 applies on the compact set $\mathrm{cl}(B)$). This implies $h(x_k) =0$ and $f(x_k) \leq f(x)$ from~\eqref{eq:contradiction}. Since $x$ is the unique global minimizer of $f$ over $K \cap \Mcal$, it follows that $x_k = x $ for all $k$ sufficiently large. This contradicts that $x_k \neq x$ for all $k$ and proves the original statement.
\item From item 2, for all $\beta > \beta_1(x)$, if $x$ is a local minimum of $g$, then $x$ is a first-order critical point of~\eqref{eq:P}. This implies \begin{equation}
f(x) = g(x)
\end{equation}
since $h(x) =0$. From the local optimality of $x$ for $g$, there exists a ball $B\subset \calE$ centered at $x$ such that
\begin{align*}
g(x) &\leq g(y)  && \textrm{ for all } y\in B.
\end{align*}
This holds a fortiori for all $y \in B \cap \Mcal.$
Combining the last two results gives
\begin{align}
f(x) &= g(x) \leq g(y) = f(y)  && \textrm{ for all }y\in B \cap \Mcal
\label{eq:x_local_min_B}
\end{align}
where the equalities hold because $x, y \in \calM$ imply $h(x) = h(y) = 0$, and the inequality holds owing to $x, y \in B$. Equation~\eqref{eq:x_local_min_B} implies that $x$ is a local minimizer of~\eqref{eq:P}.\qedhere 
\end{enumerate}
\end{proof}
\end{arxiv}

\begin{arxiv}
\begin{proposition}[\citep{fletcher1970class}]
%\label{prop:strict-2nd-order-point}
If $x\in \M$ is a local minimizer of~\eqref{eq:P} with $\Hess_\M f(x) \succ 0$, there exists $\beta$ large enough such that $\nabla^2 g(x) \succ 0$. 
\end{proposition}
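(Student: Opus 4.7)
The plan is to exploit the Hessian formula at a feasible point and then perform an orthogonal decomposition into tangent and normal directions. Since $x\in \calM$ implies $h(x)=0$, I would substitute this into the general formula~\eqref{eq:hessian-g} to obtain the simplified expression~\eqref{eq:hessian_g_feasible} for $\nabla^2 g(x)$. Writing $A = \nabla^2 f(x) - \sum_{i=1}^m \lambda_i(x) \nabla^2 h_i(x)$, this reads
\begin{equation*}
\nabla^2 g(x)[\dot x] = A[\dot x] + 2\beta \D h(x)^*[\D h(x)[\dot x]] - \D\lambda(x)^*[\D h(x)[\dot x]] - \D h(x)^*[\D\lambda(x)[\dot x]].
\end{equation*}

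The main idea is that for any $\dot x \in \calE$, I decompose orthogonally $\dot x = u + v$ with $u \in \T_x\calM = \ker \D h(x)$ and $v \in \Nrm_x\calM = \range \D h(x)^*$. Since $\D h(x)[u] = 0$, we have $\D h(x)[\dot x] = \D h(x)[v]$, and because $v$ lies in the row space, $\|\D h(x)[v]\| \geq \sigmabar \|v\|$ under~\aref{assu:ROI}. Expanding $\langle \dot x, \nabla^2 g(x)[\dot x]\rangle$ then yields four groups of terms: the quadratic form $\langle \dot x, A[\dot x]\rangle$ (independent of $\beta$), the penalty term $2\beta \|\D h(x)[v]\|^2 \geq 2\beta \sigmabar^2 \|v\|^2$, and cross terms $-2\langle \D h(x)[v], \D\lambda(x)[\dot x]\rangle$ bounded by a constant times $\|v\|(\|u\|+\|v\|)$.

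For the $A$-term, I would use Proposition~\ref{prop:hessian-g-riemann-feasible} (or directly~\eqref{eq:hessfM}): for $u\in \T_x\calM$, $\langle u, A[u]\rangle = \langle u, \Hess_\calM f(x)[u]\rangle \geq \mu \|u\|^2$ with $\mu := \lambda_{\min}(\Hess_\calM f(x)) > 0$ by hypothesis. The remaining contributions in $\langle \dot x, A[\dot x]\rangle$, namely $2\langle u, A[v]\rangle + \langle v, A[v]\rangle$, are bounded below by $-2\|A\|_{\mathrm{op}}\|u\|\|v\| - \|A\|_{\mathrm{op}}\|v\|^2$. Combining all the terms and applying Young's inequality $2ab\leq \tfrac{\mu}{2}a^2 + \tfrac{2}{\mu}b^2$ to every $\|u\|\|v\|$ cross term absorbs the mixed contributions into at most $\tfrac{\mu}{2}\|u\|^2$ plus a $\beta$-independent constant times $\|v\|^2$. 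I therefore obtain a bound of the form
\begin{equation*}
\langle \dot x, \nabla^2 g(x)[\dot x]\rangle \geq \tfrac{\mu}{2}\|u\|^2 + \bigl(2\beta \sigmabar^2 - K(x)\bigr)\|v\|^2,
\end{equation*}
where $K(x)$ depends only on $\mu$, $\|A\|_{\mathrm{op}}$, $\sigma_1(\D h(x))$ and $C_\lambda(x)$, but not on $\beta$. For any $\beta > K(x)/(2\sigmabar^2)$, the coefficient of $\|v\|^2$ is strictly positive, so $\langle \dot x, \nabla^2 g(x)[\dot x]\rangle > 0$ whenever $\dot x \neq 0$, which gives $\nabla^2 g(x) \succ 0$.

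The only delicate point is handling the cross terms that mix $u$ and $v$: the Hessian of $g$ is not block-diagonal in the tangent/normal decomposition, and the off-diagonal blocks do not vanish. However, since those blocks are bounded by constants independent of $\beta$, a standard Young-style absorption argument suffices to control them using a fixed fraction of the strict positivity $\mu$ on the tangent part and the growing $2\beta\sigmabar^2$ coefficient on the normal part. Everything else is a direct substitution into~\eqref{eq:hessian_g_feasible}.
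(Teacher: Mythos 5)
Your proposal is correct and follows essentially the same route as the paper's proof: substitute $h(x)=0$ into the Hessian formula, decompose $\dot x = u+v$ into tangent and normal parts, use $\langle u, Hu\rangle = \langle u, \Hess_\calM f(x)[u]\rangle \geq \mu\|u\|^2$ together with the $2\beta\,\|\D h(x)[v]\|^2 \geq 2\beta\sigmamin^2\|v\|^2$ penalty term, and control the $\beta$-independent cross terms. The only cosmetic difference is that the paper packages the final step as positivity of a $2\times 2$ matrix in $(\|u\|,\|v\|)$ (via trace and determinant), whereas you absorb the cross terms with Young's inequality; the two are interchangeable.
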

\begin{proof}
\label{sec:appendix-fletcher}
Starting from the expression of the Hessian of $g$ at feasible points~\eqref{eq:hessian_g_feasible}, split $\dot x = u + v \calE$ into a tangent part ($u$) and a normal part ($v$). Define the following symmetric operator on $\calE$:
\begin{align*}
	H & = \nabla^2 f(x) - \sum_{i = 1}^{m} \lambda_i(x) \nabla^2 h_i(x).
\end{align*}
Then,
\begin{align*}
	\inner{\dot x}{\nabla^2 g(x)[\dot x]} & = \inner{u}{Hu} + 2\inner{v}{Hu} + \inner{v}{Hv} \\
				& \quad + 2\beta \|\D h(x)[v]\|^2 \\
				& \quad - 2\inner{\D\lambda(x)[u + v]}{\D h(x)[v]}.
\end{align*}
Note that $\inner{u}{Hu} = \inner{u}{\Hess_\calM f(x)[u]}$.
Also, let $\sigmamin > 0$ denote the $m$th singular value of $\D h(x)$.
Then,
\begin{align*}
	\inner{\dot x}{\nabla^2 g(x)[\dot x]} & \geq \lambdamin(\Hess_\calM f(x)) \|u\|^2 - 2\|H\|\|u\|\|v\| \\
	& \quad + (2\beta \sigmamin^2 + \lambdamin(H)) \|v\|^2 \\
	& \quad - 2 \|\D\lambda(x)\| \|\D h(x)\| \|u+v\| \|v\| \\
	& \geq \lambdamin(\Hess_\calM f(x)) \|u\|^2 - 2 \left( \|H\| + \|\D\lambda(x)\| \|\D h(x)\| \right) \|u\|\|v\| \\
	& \quad + \left(2\beta \sigmamin^2 + \lambdamin(H) - 2 \|\D\lambda(x)\| \|\D h(x)\|\right) \|v\|^2 \\
	& = \begin{bmatrix}
		\|u\| & \|v\|
	\end{bmatrix} \begin{bmatrix}
		\lambdamin(\Hess_\calM f(x)) & -(\|H\| + \|\D\lambda(x)\| \|\D h(x)\|) \\ -(\|H\| + \|\D\lambda(x)\| \|\D h(x)\|) & 2\beta \sigmamin^2 + \lambdamin(H) - 2 \|\D\lambda(x)\| \|\D h(x)\|
	\end{bmatrix}
	\begin{bmatrix}
	\|u\| \\ \|v\|
	\end{bmatrix}.
\end{align*}
If $\Hess_\calM f(x) \succ 0$, we can pick $\beta>0$ large enough such that the $2\times 2$ matrix above has a positive determinant and trace; and is therefore positive definite. Note however that if we only have $\Hess_\calM f(x) \succeq 0$, then it is not possible (in general) to make the $2\times 2$ matrix above positive semidefinite with any finite $\beta$.
\end{proof} 
\end{arxiv}

\end{document}